\documentclass{article}
\usepackage[T1]{fontenc} 
\usepackage[utf8]{inputenc}
\usepackage{lipsum} 
\usepackage{lmodern}
\usepackage{amssymb}
\usepackage{amsthm}
\usepackage{bm}
\usepackage{mathtools}
\usepackage{bbm,ulem}

\usepackage{braket}
\usepackage{esint}
\usepackage{todonotes}

\usepackage{amsmath}

\usepackage{pgfplots}

\newcommand{\abs}[1]{{\left|#1\right|}}

\newcommand{\norma}[1]{{\left\Vert#1\right\Vert}}

\newcommand{\eps}{\varepsilon}

\newcommand{\parzder}[2]{{\frac{\partial #1}{\partial #2}}}

\usepackage{booktabs}
\usepackage{graphicx}
\usepackage{tikz}
\usetikzlibrary{patterns, math, calc, intersections }
\usepackage{multicol}
\usepackage{caption}
\usepackage{enumerate}
\usepackage[skins,theorems]{tcolorbox}
\tcbset{highlight math style={enhanced,
		colframe=black,colback=white,arc=0pt,boxrule=1pt}}
\def\Xint#1{\mathchoice
    {\XXint\displaystyle\textstyle{#1}}%
    {\XXint\textstyle\scriptstyle{#1}}%
    {\XXint\scriptstyle\scriptscriptstyle{#1}}%
    {\XXint\scriptscriptstyle\scriptscriptstyle{#1}}%
      \!\int}
\def\XXint#1#2#3{{\setbox0=\hbox{$#1{#2#3}{\int}$}
    \vcenter{\hbox{$#2#3$}}\kern-.5\wd0}}
\def\dashint{\Xint-}

\theoremstyle{definition}
\newtheorem{definition}{Definition}[section]

\theoremstyle{plain}
\newtheorem{theorem}{Theorem}[section]

\newtheorem{lemma}[theorem]{Lemma}
\newtheorem{proposition}[theorem]{Proposition}
\newtheorem{corollary}[theorem]{Corollary}
\theoremstyle{definition}
\newtheorem{example}{Example}[section]

\newtheorem{remark}[example]{Remark}

\renewcommand{\div}{\mathrm{div}}
\DeclareMathOperator{\spann}{\text{span}}

\DeclareMathOperator{\R}{\mathbb{R}}
\DeclareMathOperator{\N}{\mathbb{N}}

\newcommand{\1}{\mathbbm{1}}
\newcommand{\Ho}{\mathcal{H}}

\makeatletter
\newcommand{\myfootnote}[2]{\begingroup
	\def\@makefnmark{}%
	\addtocounter{footnote}{-1}%
	\footnote{\textbf{#1} #2}%
	\endgroup}
\makeatother

\numberwithin{equation}{section}

\usepackage{hyperref}
\hypersetup{linktoc=none, bookmarksnumbered, colorlinks=true, linkcolor=magenta, citecolor = cyan}

\usepackage{graphicx} 

\title{Quantitative Kr\"{o}ger inequalities for Neumann eigenvalues of convex domains}
\author{Dorin Bucur\footnote{Universit\'e  Savoie Mont Blanc, Laboratoire de Math\'ematiques CNRS UMR 5127,
  Campus Scientifique, 73376 Le-Bourget-Du-Lac, France
({\tt dorin.bucur@univ-savoie.fr}).}
\and Andrea Gentile  \footnote{Mathematical and Physical Sciences for Advanced Materials and Technologies, Scuola Superiore Meridionale, Largo San Marcellino 10, 80138 Napoli, Italy ({\tt andrea.gentile2@unina.it}).}
\and Antoine Henrot\footnote{Universit\'e de Lorraine, CNRS, Institut Elie Cartan de Lorraine, BP 70239 54506 Vand\oe uvre-l\`es-Nancy Cedex, France ({\tt antoine.henrot@univ-lorraine.fr}).}
}
\date{\today}

\begin{document}

\maketitle

\begin{abstract}
    \noindent Refining the sharp upper bounds $\mu_{k,d}^* $ obtained by Kr\"oger (1999) for the $k$-th Neumann eigenvalue of a convex domain $\Omega \subset \R^d$, we prove the following inequalities: for any $k\in \N$  there exists a constant $C(k,d) >0$ such that 
    $$D_{\Omega}^2 \mu_k(\Omega) \leq \mu_{k,d}^* - C(k,d) a_2(\Omega)^2/D_{\Omega}^2$$
     where $D_{\Omega}$ is the diameter of $\Omega$ and $a_2(\Omega)$ is the second largest semiaxis of the John ellipsoid of $\Omega$. In the planar case, for $k=1$
and with symmetry assumptions, we also give an explicit value of the constant $C(1,2)$.
    
    \medskip

\noindent\textsc{R\'esum\'e.}
Dans cet article, nous donnons une version plus pr\'ecise de la borne sup\'erieure $\mu_{k,d}^* $ obtenue par Kr\"oger (1999) pour la $k$-i\`eme valeur propre Neumann du Laplacien
sur un domaine convexe $\Omega\subset \R^d$. Nous montrons l'in\'egalit\'e suivante:  pour tout $k\in \N$  il existe une constante $C(k,d) >0$ telle que
    $$D_{\Omega}^2 \mu_k(\Omega) \leq \mu_{k,d}^* - C(k,d) a_2(\Omega)^2/D_{\Omega}^2$$
o\`u  $D_{\Omega}$ est le diam\`etre de $\Omega$ et $a_2(\Omega)$ est le deuxi\`eme plus grand semi-axe de l'ellipsoide de John de $\Omega$. Dans le plan, pour $k=1$
et sous des hypoth\`eses de sym\'etrie, nous donnons \'egalement une valeur explicite de la constante $C(1,2)$.

\end{abstract}

\section{Introduction}

Let $\Omega$ be a bounded, open and convex set in $\R^d$, we consider the  Neumann-Laplacian eigenvalue problem
\begin{equation}
    \label{eigenvalue_problem_PDE}
    \begin{cases}
        - \Delta u = \mu u &\text{ in } \Omega \\[0.5ex]
        \displaystyle{\frac{\partial u}{\partial \nu} = 0} &\text{ on }\partial \Omega
    \end{cases},
\end{equation}
where $\nu$ is the outer normal to $\Omega$.
The embedding of $H^1(\Omega)$ into $L^2(\Omega)$ is compact, hence the   spectrum is discrete and consists only on eigenvalues (ordered with multiplicity)
\[
    0 = \mu_0(\Omega) < \mu_1(\Omega) \leq \mu_2(\Omega) \leq \ldots \leq \to +\infty,
\]
Each eigenvalue can be characterized by
\begin{equation}
    \label{eq_mu_k}
    \mu_k(\Omega) = \min_{E_k} \max_{0 \neq u \in E_k} \frac{\displaystyle \int_{\Omega} \abs{\nabla u}^2 \, dx }{\displaystyle \int_{\Omega} u^2 \, dx },
\end{equation}
where the minimum is taken over all the $k$-dimensional subspaces of $H^1(\Omega)$, $L^2$-orthogonal to the constant functions.

%
%

The spectrum of the Laplacian has been extensively studied in the literature in relation to the geometry of the domain. A natural question arising in connection 
with Pólya’s conjecture is the following maximization problem:\begin{equation}
    \label{max_problem_Szego_Weinberger}
    \sup \{ \mu_k(\Omega) \abs{\Omega}^{\frac{2}{d}} \colon \Omega \subset \R^d, \, \Omega \text{ open, bounded and Lipschitz} \},
\end{equation}
where $\lvert \cdot \rvert$ denotes the $d$-dimensional Lebesgue measure. The functional above is scale invariant since
$ \mu_k(t\Omega) = t^{-2}\mu_k(\Omega)$, for all $t >0$.

This problem was solved by Szeg\"{o} \cite{Szego} for planar, simply connected domains, while Weinberger \cite{Weinberger} generalized the result in $\R^d$ removing any topological restrictions :
\begin{equation}
    \mu_1(\Omega) \abs{\Omega}^{\frac{2}{d}} \leq \mu_1(B) \abs{B}^{\frac{2}{d}},
\end{equation}
where $B$ is a ball in $\R^d$.

In \cite{Girouard_Nadirashvili_Polterovich}  the authors find the sharp upper bound for $\mu_2$  for simply connected planar domains, and in  \cite{Bucur_Henrot_second_eigenvalue} in any dimension without topological restrictions, obtaining
\begin{equation}
    \mu_2(\Omega) \abs{\Omega}^{\frac{2}{d}} \leq 2^\frac 2d\mu_1(B) \abs{B}^{\frac{2}{d}}.
\end{equation}
For higher eigenvalues, even the existence of maximizer of the quantity $\mu_k(\Omega) \abs{\Omega}^{\frac{2}{d}}$ is an open problem. See \cite{Bucur_Martinet_Oudet} for an analysis of $\mu_k(\Omega)$ for $k\geq 3$
and \cite{Bo-He-Mi24} for the optimization of $\mu_k(\Omega)$ among planar convex domains with perimeter or diameter constraints.

In the context of optimizing Neumann eigenvalues, another natural question concerns the relationship between eigenvalues and the diameter. Kröger \cite{Kro99} studied the maximization of the $k$-th Neumann eigenvalue among convex sets $\Omega \subset \mathbb{R}^d$, $d \geq 2$, with prescribed diameter $D_\Omega$, and obtained sharp upper bounds.$\mu_{k,d}^*$  such that
\begin{equation}
    \label{eq:Henrot_Michetti_e_Kroger_intro}
    D_{\Omega}^2\mu_k(\Omega) <\mu_{k,d}^* \qquad \forall \, \Omega \subseteq \R^d, \, \Omega \text{ open, bounded and convex}.
\end{equation}
A variational analysis of these bounds together with a fine description of a maximizing sequence were given in \cite{Henrot_Michetti}.

We point out that equality in \eqref{eq:Henrot_Michetti_e_Kroger_intro}   is not attained for $d\ge 2$. Instead, it is asymptotically approached by  a   sequence of collapsing domains  
with  a specific geometry, which converges to a segment. From this perspective, there is a strong connection between this problem and the analysis of eigenvalues of a certain class of Strum-Liouville problems. 
We also mention that the case $k=1$ was previously considered in \cite{Cheng75}, \cite{Ban-Bur99}
and was also studied in a nonlinear framework in \cite{Brasco_Nitsch_Trombetti}.

Given a sharp inequality such as \eqref{eq:Henrot_Michetti_e_Kroger_intro}, a natural question is what can be said about domains whose eigenvalue is close to the upper bound. More precisely, one may seek a quantitative version of the above inequality, involving a term that measures the "distance" between the domain and the limiting segment.

 The purpose of this paper is to give such sharp quantitative inequalities, naturally involving  the flatness  of the domain.
 Several examples of quantitative estimates  can be found  in  the literature. See for instance \cite{FMP_quantitative_isoperimetric,BdPV} and the survey paper \cite[Chapter 7]{Shape_Optimization_and_spectral_theory}. Typically, in all these problems  the target set is a ball and the "distance" to the  ball is expressed by the Fraenkel asymmetry.
Let us emphasize that the main difference between the aforementioned estimates and ours is that, in our case, the inequality is attained only asymptotically along a sequence of thinning domains, so that no optimal geometry exists. Results of this type are relatively rare in the literature (see, for instance, \cite{AMPS, ama-buc-fra1, ama-buc-fra2}), and the existing ones concern only the first and, in some cases, the second eigenvalue. In \cite{ama-buc-fra1}, it is proved that there exists a dimensional constant $C(d)>0$ such that, for any open, bounded, convex set $\Omega \subset \mathbb{R}^d$,
\begin{equation}\label{d30}
        \mu_1(\Omega) \geq \frac{\pi^2}{D_{\Omega}^2} + C(d) \frac{a_2(\Omega)^2}{D_{\Omega}^4}.
\end{equation}
where $D_{\Omega}$, $a_2(\Omega)$ are respectively the diameter and the second largest semiaxis of the John ellipsoid 
    of $\Omega$ (see Section \ref{section_2} for the definition of John ellipsoid).

In this paper we   establish the following quantitative estimates of \eqref{eq:Henrot_Michetti_e_Kroger_intro} : 
\begin{theorem}
    \label{main_theorem_k=2}
    Let $k,d \in \N$, $d\ge 2$. There exist a constant $C(k,d) > 0$ depending only on $k$ and $d$  such that for any $\Omega \subset \R^d$ open, bounded and convex set, it holds
    \begin{equation}
        \label{eq_quantitative_mu_2}
        \mu_k(\Omega) \leq \frac{\mu_{k,d}^*}{D_{\Omega}^2} - C(k,d)  \frac{a_2(\Omega)^2}{D_{\Omega}^4},
    \end{equation}
    where $\mu_{k,d}^*$ is the sharp  upper bound for $\mu_k(\Omega) D_{\Omega}^2$ given by \cite{ Kro99,Henrot_Michetti}.
\end{theorem}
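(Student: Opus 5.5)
Normalize $D_\Omega=1$ and write $\varepsilon=a_2(\Omega)$, which is comparable (up to dimensional constants, by John's theorem) to the second largest width of $\Omega$. The statement then reads $\mu_k(\Omega)\le \mu_{k,d}^*-C\varepsilon^2$. The plan is to argue by contradiction combined with a compactness and dimension-reduction argument, splitting into two regimes for $\varepsilon$.

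\textbf{Regime $\varepsilon\ge\varepsilon_0$.} Suppose there were convex sets $\Omega_n$ with $D=1$, $a_2(\Omega_n)\ge\varepsilon_0$ and $\mu_k(\Omega_n)\to\mu_{k,d}^*$. By Blaschke selection, $\Omega_n$ converges in Hausdorff distance to a convex body $\Omega$ with $D_\Omega=1$. The limit has nonempty interior in at least two directions, so it is not a segment. Neumann eigenvalues are continuous under Hausdorff convergence of convex sets, both when the limit is full-dimensional and, via the Sturm--Liouville/lower-dimensional limit, when it collapses onto a lower-dimensional convex set of dimension at least $2$. Hence $\mu_k(\Omega)=\mu_{k,d}^*$. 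This contradicts the strict inequality \eqref{eq:Henrot_Michetti_e_Kroger_intro}, or its lower-dimensional analogue $\mu_{k,m}^*\le\mu_{k,d}^*$ for weighted collapsed limits. We conclude that $\mu_k\le\mu_{k,d}^*-\delta(\varepsilon_0)$, and since $\varepsilon\le1$ this gives the bound for a suitable $C$.

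\textbf{Regime $\varepsilon\to0$.} This is the core, and I expect it to be the main obstacle. I would follow the strategy of \cite{ama-buc-fra1}. Place the diameter along $x_1\in(0,1)$ and write the sections $\Omega\cap\{x_1=t\}$. Expanding $\mu_k$ in thin convex sets gives
\[
\mu_k(\Omega)\le \mu_k(h)+O(\varepsilon^2)\ \text{(with a favorable sign)},
\]
where $h(t)=|\Omega\cap\{x_1=t\}|_{d-1}$ is $\frac1{d-1}$-concave and $\mu_k(h)$ is the $k$-th eigenvalue of $-(hu')'=\mu hu$ on $(0,1)$. Concretely, I would plug test functions $u(x_1)$ of the one-dimensional problem into the min-max characterization of the quantity above, but the key point goes the other way: we need an upper bound on $\mu_k(\Omega)$. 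For this I would use Kröger's or Henrot--Michetti's approach, which shows that $\mu_k(h)\le\mu_{k,d}^*$ over all $\frac1{d-1}$-concave $h$, together with a Poincaré-type correction showing that transversal oscillation and the genuine $d$-dimensional structure produce a loss of order $\varepsilon^2$. To make this precise, I would take an eigenfunction $u$ of $\Omega$ and decompose it as $u=\bar u(x_1)+w$, where $\bar u$ is the section average. The transversal part $w$ costs energy at least $\varepsilon^{-2}\int w^2$, so $w$ is small. Then $\mu_k(\Omega)\le\mu_k(h)+O(\varepsilon^2)$ is not enough by itself; we must show a strict $O(\varepsilon^2)$ gain.

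To obtain the gain, I would prove a quantitative one-dimensional inequality:
\[
\mu_k(h)\le\mu_{k,d}^*-c\,\mathrm{dist}(h,\mathcal M)^2,
\]
where $\mathcal M$ is the class of optimal degenerate profiles (piecewise power-type profiles vanishing at the nodes, as described in \cite{Henrot_Michetti}). I would then show that a convex body with second semiaxis $\varepsilon$ forces $h$ to stay at distance at least of order $\varepsilon$ from the singular optimal profiles. The reason is that the optimizers require $h$ to degenerate like $t^{d-1}$ at interior points, whereas a John ellipsoid of width $\varepsilon$ gives $h\gtrsim\varepsilon^{d-1}$ on a set of length comparable to $1$. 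An alternative route is a contradiction argument with rescaled profiles $h_n/\varepsilon_n^{d-1}$, using the non-degeneracy of the second variation from \cite{Henrot_Michetti}. This one-dimensional quantitative stability, for every $k$, is where I expect the difficulty to lie: the maximizers are degenerate, the weights are singular, and the error terms must be controlled carefully so that the $\varepsilon^2$ gain is not swamped by the $O(\varepsilon^2)$ thin-domain error.
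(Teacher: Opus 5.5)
The gap is in your second regime: the one-dimensional gain you rely on does not exist.

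\textbf{Why the one-dimensional route fails.} The Sturm--Liouville eigenvalue satisfies $\mu_k(\lambda h)=\mu_k(h)$ for every $\lambda>0$. So it cannot see the thinness of $\Omega$, and there are convex sets with any $a_2$ whose section function is exactly optimal. For $k=1$, take $\Omega_\eps=\{(x_1,x'):0<x_1<1,\ x'\in\eps\min(x_1,1-x_1)S\}$ with $S\subset\R^{d-1}$ a convex body containing $0$. For $\eps$ small, $D_{\Omega_\eps}=1$ and $a_2(\Omega_\eps)$ is comparable to $\eps$. Its section function $h(x_1)=\eps^{d-1}\abs{S}\min(x_1,1-x_1)^{d-1}$ is a multiple of $p_{1,d}^*$, so $\mu_1(h)=\mu_{1,d}^*$. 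Sets built the same way on an optimal profile $p_{k,d}^*$ do the same for $k\ge2$. Any inequality $\mu_k(h)\le\mu_{k,d}^*-c\,\mathrm{dist}(h,\mathcal M)^2$ forces $\mathrm{dist}$ to vanish on the whole cone $\{\lambda p_{k,d}^*\}$. Hence your claim $\mathrm{dist}(h,\mathcal M)\gtrsim\eps$ is false, and the second-variation variant fails for the same reason. The heuristic behind it is also wrong: $(p_{k,d}^*)^{1/(d-1)}$ is a triangle, a trapezoid, or piecewise affine for $d=3$, and it vanishes only at $x_1=0,1$, not like $t^{d-1}$ at interior points.

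\textbf{What the gain must come from instead.} The whole gain has to come from a strict $d$-dimensional comparison $\mu_k(\Omega)\le\mu_k(h)-c\,a_2(\Omega)^2$, which your plan never establishes. This comparison is not automatic: for a thin cylinder, $\mu_k(\Omega)=\mu_k(h)$. Your tools also point the wrong way. Inserting functions of $x_1$ into the min--max already gives $\mu_k(\Omega)\le\mu_k(h)$ exactly, which is the upper-bound direction you need. Splitting a true eigenfunction as $\bar u+w$ is a device for lower bounds.

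\textbf{The missing idea (the paper's argument).} Let the contradiction argument pin down the geometry, then gain from a transversal correction in the test function.
\begin{itemize}
\item A sequence with $\mu_k(\Omega_n)>\mu_{k,d}^*-\frac1n a_2(\Omega_n)^2$ must collapse onto a segment. This is your first regime, but it needs the \emph{strict} bound $\mu_k(\omega,p)<\mu_{k,d}^*/D_\omega^2$ for collapsed limits with $\omega\subset\R^j$, $j\ge2$, which is a separate lemma. A non-strict comparison of Kr\"oger constants gives no contradiction.
\item The limit profile must then be $p_{k,d}^*$. By the equality case of Brunn--Minkowski, the rescaled limit $\tilde\Omega$ is piecewise conical. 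Hence $\int_{S_{x_1}\cap\tilde\Omega}(x_2-c(x_1))^2\,d\Ho^{d-1}=K\,p(x_1)^{1+\frac{2}{d-1}}$, with $c(x_1)$ the mean of $x_2$ on the section.
\item Test $\mu_k(\Omega_n)$ on $\spann\{u_0^n,\dots,u_{k-1}^n,\,u_k^n(x_1)[1+\tau(x_2-c(x_1))^2a_2(\Omega_n)^2]\}$, written in the variables of $\tilde\Omega_n$. This gives $\mu_k(\Omega_n)\le\mu_k(p_n)+(2\gamma_n\tau+4\beta_n\tau^2)a_2(\Omega_n)^2+o(a_2(\Omega_n)^2)$.
\item Integrating by parts with the Sturm--Liouville equation shows that $\gamma_n$ tends to a positive multiple of $\sum_i\tilde a_i^2\int_{z_i}^{z_{i+1}}u_k^2p\,dx_1$. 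Here $\tilde a_i$ are the slopes of $p^{1/(d-1)}$ on its affine pieces. Choosing $\tau<0$ small gives the contradiction.
\end{itemize}
On the one-dimensional side, only the non-quantitative bound $\mu_k(p_n)\le\mu_{k,d}^*$ is used.
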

Moreover we establish that  the exponent $2$ on the flatness term can not be improved, in general. 
\begin{theorem}
    \label{teo:optimality_exponent_2}
    Exponent $2$ in \eqref{eq_quantitative_mu_2} over the second John dimension is sharp for $d=2$ and $k=1$.
\end{theorem}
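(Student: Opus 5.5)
We show sharpness by exhibiting a family $\Omega_\eps\subset\R^2$ of convex domains with $D_{\Omega_\eps}=1$ (after scaling) and $a_2(\Omega_\eps)\asymp \eps$ such that
\[
\mu_{1,2}^*-\mu_1(\Omega_\eps)\le C\eps^2 .
\]
If the inequality held with $a_2^{\alpha}$ for some $\alpha<2$, this family would give $c\,\eps^\alpha\le C\eps^2$ for all small $\eps$, which is impossible. Hence the exponent $2$ cannot be lowered.

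\textbf{Limit profile.} The starting point is the description of Kr\"oger's extremal configuration from \cite{Kro99,Henrot_Michetti}. For $k=1$, $d=2$, there is a nonnegative concave profile $h^*$ on $[0,1]$ such that
\[
\mu_{1,2}^*=\lambda_1(h^*),
\]
where $\lambda_1(h)$ is the first nontrivial eigenvalue of the Sturm--Liouville problem $-(h u')'=\lambda h u$ on $(0,1)$ with natural boundary conditions. The supremum is approached by the thin domains
\[
\Omega_\eps=\{(x,y):\ 0<x<1,\ 0<y<\eps h^*(x)\},
\]
suitably adjusted so that the diameter equals $1$. By concavity of $h^*$, $\Omega_\eps$ is convex, and its John ellipsoid has semiaxes comparable to $1$ and $\eps\|h^*\|_\infty$, so $a_2(\Omega_\eps)\asymp\eps$.

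\textbf{Diameter normalization.} The diameter of $\Omega_\eps$ is $\sqrt{1+O(\eps^2)}$, not $1$. For instance, if $h^*$ vanishes at an endpoint, the diameter is attained between an endpoint and a point of the upper boundary. We therefore rescale to diameter $1$. This changes $D^2\mu_1$ only by a factor $1+O(\eps^2)$, which is an admissible error.

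\textbf{Eigenvalue asymptotics.} The core step is to prove
\[
\mu_1(\Omega_\eps)\ge \lambda_1(h^*)-C\eps^2 .
\]
For thin domains it is classical that $\mu_1(\Omega_\eps)\to\lambda_1(h^*)$, but we need the rate. We use a lower bound via separation of variables in the transversal direction.

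1. Take an eigenfunction $u$ and decompose $u(x,y)=\bar u(x)+w(x,y)$, where $\bar u$ is the vertical average over the fiber $(0,\eps h^*(x))$.
2. The transversal Poincar\'e inequality on fibers of length $\eps h^*(x)$ gives $\int|\partial_y w|^2\ge \pi^2/(\eps^2\|h^*\|^2_\infty)\int w^2$. The oscillating part $w$ therefore carries energy of order $\eps^{-2}$, so it must have small norm.
3. The cross terms come from $\partial_x$ acting on the moving boundary $y=\eps h^*(x)$. They involve $\eps h^{*\prime}$ and are of order $\eps$ times norms of $w$. Combined with $\|w\|=O(\eps)\cdot(\text{energy})$, they contribute at order $\eps^2$.

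An alternative is to use the explicit known second-order expansion for Neumann eigenvalues of thin domains, where the $\eps^2$ correction appears as an eigenvalue perturbation of the one-dimensional operator. The Rayleigh quotient of $\bar u$ in the one-dimensional weighted problem is at least $\lambda_1(h^*)$, and the errors are $O(\eps^2)$. Together with the diameter normalization this gives the estimate in the first display.

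\textbf{Main obstacle.} The hardest point is the regularity of $h^*$. The extremal profiles in \cite{Henrot_Michetti} are typically piecewise power-type functions (for $d=2$, piecewise affine or $\sin$-type), which may vanish at the endpoints, so $|h^{*\prime}|$ is bounded but $h^*$ degenerates there. The $O(\eps^2)$ error must hold uniformly up to these degenerate ends. I would handle this by the following steps:
\begin{itemize}
\item[(i)] use that $h^*$ is Lipschitz and concave, so the slope term $\eps h^{*\prime}$ is uniformly $O(\eps)$;
\item[(ii)] near the tips, use that fibers are even shorter, so the transversal Poincar\'e constant is even better;
\item[(iii)] if needed, replace $h^*$ by a slightly truncated profile bounded away from $0$. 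The truncation changes $\lambda_1$ by an amount that can also be made $O(\eps^2)$, with the cut-off chosen at the level $\eps$.
\end{itemize}
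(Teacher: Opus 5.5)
Your route is sound, but you prove the key lower bound differently from the paper. For $d=2$, $k=1$, Kr\"oger's extremal profile is the tent $h^*(x)=\min(2x,2-2x)$, not a $\sin$-type profile. So your $\Omega_\eps$ is exactly the flat isosceles triangle with base $[0,1]$ and height $\eps$ that the paper uses. Its diameter is the base, so no rescaling is needed.

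The paper then quotes the Laugesen--Siudeja bound $\mu_1(T_\alpha)D^2\ge 4j_{0,1}^2\sin^2(\alpha/2)=4j_{0,1}^2(1-w^2/\ell^2)$. This gives $\mu_1\ge 4j_{0,1}^2-16j_{0,1}^2w^2+o(w^2)$ in two lines, but it rests on a triangle-specific result. You instead prove $\mu_1(\Omega_\eps)\ge\lambda_1(h^*)(1-C\eps^2)$ by a fiberwise mean/oscillation argument. That argument is self-contained and uses only that $h^*$ is bounded, Lipschitz and concave. It should adapt, via min--max on the span of the first $k+1$ eigenfunctions, to other extremal profiles. The price is a worse explicit constant.

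One step does not work as you justify it. Write $u=\bar u+w$ and $L=\norma{h^{*\prime}}_\infty$. Differentiating $\int_0^{\eps h^*(x)}w\,dy=0$ in $x$ gives
\[
X:=\int_{\Omega_\eps}\bar u'\,\partial_x w\,dx\,dy=-\eps\int_0^1\bar u'(x)\,h^{*\prime}(x)\,w\bigl(x,\eps h^*(x)\bigr)\,dx .
\]
This is a boundary trace term, and $\norma{w}_{L^2}=O(\eps)$ does not make it $O(\eps^2)$; crude bounds give only $O(\eps)$. What works is the following chain:
\begin{itemize}
\item[(a)] The fiberwise trace bound $w(x,\eps h^*(x))^2\le \eps h^*(x)\int_0^{\eps h^*(x)}(\partial_y w)^2\,dy$ (valid since $w(x,\cdot)$ has zero mean) gives $\abs{X}\le\eps L\sqrt{AB}$. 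Here $A=\int_0^1\eps h^*\,\bar u'^2\,dx$ and $B=\int_{\Omega_\eps}(\partial_y u)^2$.
\item[(b)] Absorb $X$ into the transverse energy by Young's inequality:
\[
\int_{\Omega_\eps}\abs{\nabla u}^2\ge A-2\eps L\sqrt{AB}+B\ge(1-\eps^2L^2)A .
\]
\item[(c)] Since $\int_0^1\eps h^*\bar u\,dx=\int_{\Omega_\eps}u=0$, you have $A\ge\lambda_1(h^*)\bigl(\norma{u}^2-\norma{w}^2\bigr)$.
\item[(d)] The fiber Poincar\'e inequality gives $\norma{w}^2\le\eps^2\norma{h^*}_\infty^2\pi^{-2}\mu_1(\Omega_\eps)\norma{u}^2$.
\end{itemize}
Combining these with $\mu_1(\Omega_\eps)\le\mu_{1,2}^*$ gives the claim.

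Every estimate is fiberwise and scales with $h^*(x)$, so the vanishing of $h^*$ at the endpoints causes no difficulty. Your ``main obstacle'' is therefore not one. You should drop the truncation in (iii): $h^*$ is a maximizer, and replacing it by a concave profile bounded below by $\delta$ in general lowers $\lambda_1$ at first order in $\delta$. A cut-off at level $\eps$ would then cost an amount of order $\eps$, not $\eps^2$.
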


Since the proof of \eqref{eq_quantitative_mu_2} is done by contradiction, it does not provide an explicit value of the constant $C(k,d)$.
Let us remark that it is the case in most quantitative isoperimetric inequalities, see e.g. \cite{FMP_quantitative_isoperimetric,BdPV} and the survey paper \cite[Chapter 7]{Shape_Optimization_and_spectral_theory}. Therefore, we propose in  Section \ref{section_4} to give an explicit value
of the constant $C(1,2)$ (at least for domains with two axis of symmetry). We prove
\begin{theorem}
Let $\Omega$ be a convex planar domain and assume that $\Omega$ is symmetric with respect to the mediatrix of its diameter and with respect to an orthogonal line , then
\begin{equation}
D_{\Omega}^2 \mu_1(\Omega) \leq 4j_{0,1}^2  - 0.108 \frac{w_\Omega^2}{D_{\Omega}^2}
\end{equation}
where $w_\Omega$ is the width of the convex domain in the direction orthogonal to the diameter.
\end{theorem}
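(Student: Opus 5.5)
We place the diameter on the $x$-axis, $\Omega$ symmetric with respect to $\{x=0\}$ (the mediatrix) and $\{y=0\}$, with $\Omega=\{(x,y): |x|<D/2,\ |y|<h(x)/2\}$. Here $h$ is even and concave on $[-D/2,D/2]$, and $\max h = w_\Omega = h(0)$, since concavity and evenness put the maximum at $0$. After scaling $D_\Omega=2$, the goal is $4\mu_1(\Omega)\le 4j_{0,1}^2-0.108\,w^2/4$. We will use only test functions, so everything reduces to producing one good trial function.

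By the symmetry in $x$, we take $u(x,y)=\varphi(x)$ with $\varphi$ odd, which is automatically orthogonal to constants. Then
\[
\mu_1(\Omega)\le \frac{\int_{-1}^1 \varphi'^2 h\,dx}{\int_{-1}^1\varphi^2 h\,dx}=:R_h(\varphi),
\]
and the problem becomes a one-dimensional Sturm--Liouville bound with concave weight $h$. Kröger's extremal profile is the double cone $h_0(x)=c(1-|x|)$, for which the optimal odd function is built from $J_0(j_{0,1}(1-|x|))$; this gives the value $j_{0,1}^2$ (for diameter $2$). The key comparison is that any even concave $h$ with $h(\pm1)\ge 0$ and $h(0)=w$ satisfies $h\ge w(1-|x|)$. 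We therefore write $h=w(1-|x|)+g$ with $g\ge0$ even and concave-compatible, and we must show $\sup_h \mu_1$ with this splitting is at most $j_{0,1}^2-c\,w^2$ (after normalisation). A subtlety is that the weighted Rayleigh quotient is invariant under multiplying $h$ by constants, so $w$ alone cannot enter. This forces us to reinstate the $y$-dependence: the gain of order $w^2$ must come from the 2D nature of $\Omega$, not from the weight.

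Accordingly, we use the 2D test function
\[
u(x,y)=\varphi(x)+\varepsilon\,\psi(x)\,y^2
\]
or, more naturally, we exploit that Kröger's bound is attained only by collapsing domains. The idea is that the thin-domain reduction $\mu_1(\Omega)\approx\lambda_1(h)$ has a correction of order $w^2$ coming from the transversal variation. Concretely, we take $\varphi$ the optimal 1D function for $h$ and correct it to $u=\varphi(x)+\frac{(h^2/4-3y^2)}{24}\,(\ldots)$, a term chosen so that the Neumann condition on the slanted boundary $y=\pm h(x)/2$ is better satisfied. Expanding the Rayleigh quotient to second order in $w$, we aim to show
\[
\mu_1(\Omega)\le \lambda_1(h)-\kappa\int \varphi''^2h'^2 h\,dx\Big/\!\int\varphi^2h,
\]
and then to bound this uniformly below by $c\,w^2$ using the concavity of $h$ and $h\ge w(1-|x|)$. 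Alongside this, we need $\lambda_1(h)\le j_{0,1}^2$, which is Kröger's 1D result and is used as given. Finally we track constants numerically: Bessel integrals of $J_0,J_1$ on $[0,j_{0,1}]$ produce the explicit $0.108$.

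The main obstacle is making the second-order correction rigorous and uniform: not just asymptotic as $w\to0$, but valid for all $w\le D$. For large $w$, we would instead fall back on a direct bound by simple competitors. Since $\mu_1$ of fat convex domains is bounded away from $4j_{0,1}^2/D^2$ (by a compactness/explicit estimate, e.g. comparison with $\pi^2\cdot$const test functions $\sin(\pi x/2)$), the inequality holds there with room to spare. Splitting into $w\le w_0$ and $w>w_0$ and optimising $w_0$ numerically is where the constant $0.108$ would be fixed.
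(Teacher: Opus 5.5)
You correctly see that an $x$-only trial function cannot give a $w^2$ gain, since the weighted quotient is invariant under $h\mapsto ch$, and that a $y^2$ correction is needed; the paper's trial function $f(x)(1+\tau y^2)$ is of this type. But the step that actually proves the theorem is missing. You take $\varphi$ optimal for $h$ and leave the correction unspecified. Your inequality $\mu_1\le\lambda_1(h)-\kappa\int\varphi''^2h'^2h/\int\varphi^2h$ is then only a formal expansion in $w$, whose error terms depend on the profile (the slopes $h'/w$, and the behaviour where $h\to0$). Nothing makes it uniform over all concave $h$. For profiles far from the double cone, e.g. $h=w\min\{1,M(1-|x|)\}$ with $M$ large, the gain would have to come from a quantitative deficit $j_{0,1}^2-\lambda_1(h)$, which you do not provide.

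The missing idea is to use a trial function \emph{independent of $h$}. In the normalisation $D_\Omega=1$, the paper takes $f(x)=J_0(2j_{0,1}x)$ on $(0,1/2)$, extended oddly about $x=1/2$; this is the explicit eigenfunction of the extremal double cone. It also fixes $\tau\approx-0.569$. The Rayleigh quotient then becomes an explicit functional of $h$: integrals of $h,h^3,h^5$ against $J_0^2,J_1^2$. The proof reduces to four steps:
\begin{enumerate}
\item A calculus-of-variations lemma: $J(h)=\int_0^{1/2}\bigl[h+\frac{\tau}{6}w^2h^3+\frac{\tau^2}{80}w^4h^5\bigr]\bigl(J_0^2-J_1^2\bigr)(2j_{0,1}x)\,dx$ is minimised by $h=2x$ over concave nondecreasing $h$ with values in $[0,1]$ and $h(1/2)=1$. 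This uses the optimality system and the single sign change of $J_0^2-J_1^2$.
\item The Bessel identity $\int_0^{j_{0,1}}t(J_0^2-J_1^2)\,dt=0$, which kills the zeroth-order term at the cone.
\item Monotonicity of $z\mapsto z+\frac{\tau}{6}w^2z^3+\frac{\tau^2}{80}w^4z^5$, together with $h\le1$, to bound the denominator and the transversal-gradient term.
\item Monotonicity in $w$ of the resulting explicit quotient $Q(w)$, so that $w\to0$ is the worst case for every $w\in(0,1]$.
\end{enumerate}
This is what gives uniformity in both $h$ and $w$. The constant comes from $Q(0)$ at the optimal $\tau$: $0.108=4j_{0,1}^2\cdot\frac{1}{12}j_{0,1}^2\,\psi_3^2/I_{00}^2$, with $\psi_3=\int_0^{1/2}(2x)^3(J_0^2-J_1^2)(2j_{0,1}x)\,dx$ and $I_{00}=\int_0^{1/2}J_0(2j_{0,1}x)^2\,dx$.

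Your fallback for large $w$ also fails. By your own invariance remark, a $y$-independent competitor such as $\sin(\pi x/2)$ can never go below the 1D weighted eigenvalue. For every rhombus $h=w(1-|x|)$ with $0<w\le 2$, that eigenvalue is exactly $j_{0,1}^2$, and these rhombi are admissible (doubly symmetric, diameter $2$ on the $x$-axis). In fact $\sin(\pi x/2)$ gives $\frac{\pi^2}{4}\cdot\frac{\pi^2+4}{\pi^2-4}\approx 5.83>j_{0,1}^2$ there. So fat domains are not handled ``with room to spare'' by such competitors. A compactness argument would give only a non-explicit constant. The splitting at $w_0$ therefore cannot produce $0.108$, and as written your proposal never computes any explicit constant.
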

The value $0.108$ is not sharp but, in order to make a comparison, it is worth to notice that the estimate of the constant $C(2)$ in \eqref{d30} is of order $10^{-16}$ (see \cite{ama-buc-fra1}) while its numerical estimate ranges between $5$ and $6$. The sharp value of a constant similar to $C(2)$ is only known in the purely geometric case corresponding to the $W^{1,1}$ space, and equals $\frac 43$ (see \cite{BuFr26}). 

The paper is organized as follows: in Section \ref{section_2} we recall some notions and properties that will be used for the proof of Theorem \ref{main_theorem_k=2} while Section \ref{sec:proof_of_theorems} is devoted to the proof of the  Theorems \ref{main_theorem_k=2} and \ref{teo:optimality_exponent_2}. Lastly in Section \ref{section_4} we compute an explicit value of the constant $C(1,2)$ by solving an interesting problem in the calculus of variations.

\section{Notations and preliminaries}
\label{section_2}

In this section we recall some useful results.

\medskip
\noindent{\bf Convex geometry.}
We start by recalling the Brunn-Minkowski inequality (see \cite{Gardner_Brunn_Minkowski}, \cite{Schneider}). 
\begin{theorem}[Brunn-Minkowski inequality]
    \label{theo:Brunn_Minkowski}
    Let $\Omega_1, \Omega_2 \subset \R^d$ two compact convex sets with non empty interior, then for each $0 < \lambda <1$ it holds
    \begin{equation}
        \label{eq:Brunn_Minkowski_inequality}
        \mathcal{L}^d \bigl( \lambda \Omega_1 + (1-\lambda)\Omega_2 \bigr)^{\frac{1}{d}} \geq  \lambda \mathcal{L}^d (\Omega_1)^{\frac{1}{d}} + (1-\lambda) \mathcal{L}^{d}(\Omega_2)^{\frac{1}{d}},
    \end{equation}
    where $\cdot + \cdot$ denotes the Minkowski sum among two sets and $\mathcal{L}^d$ the $d$-dimensional Lebesgue measure.
Moreover equality in \eqref{eq:Brunn_Minkowski_inequality} holds if and only if $\Omega_1$ and $\Omega_2$ are homothetic. 
\end{theorem}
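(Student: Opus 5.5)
The statement to prove is the Brunn--Minkowski inequality for compact convex sets with nonempty interior, together with its equality case. I will first establish the multiplicative form for boxes, then pass to finite unions of boxes and then to general compact sets (the Hadwiger--Ohmann argument). Equality will be handled separately through the Prékopa--Leindler route or through a Steiner-symmetrization/Knothe map argument specific to convex bodies.

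\textbf{Step 1: Boxes.} Let $\Omega_1=\prod[0,a_i]$ and $\Omega_2=\prod[0,b_i]$. Then $\lambda\Omega_1+(1-\lambda)\Omega_2$ is the box with sides $\lambda a_i+(1-\lambda)b_i$. Dividing by this volume and applying AM--GM gives
\[
\prod_i\Bigl(\tfrac{\lambda a_i}{\lambda a_i+(1-\lambda)b_i}\Bigr)^{1/d}+\prod_i\Bigl(\tfrac{(1-\lambda)b_i}{\lambda a_i+(1-\lambda)b_i}\Bigr)^{1/d}\le \frac1d\sum_i 1=1,
\]
which is exactly the inequality after rescaling. (The weights $\lambda,1-\lambda$ can be absorbed by replacing $\Omega_1$ with $\lambda\Omega_1$ and $\Omega_2$ with $(1-\lambda)\Omega_2$.)

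\textbf{Step 2: Finite unions of boxes.} Here I induct on the total number of boxes. Translate the sets so that a coordinate hyperplane separates two boxes of $\Omega_1$. Then translate $\Omega_2$ so that the same hyperplane splits it with the same volume ratio. Apply the induction hypothesis to the two half-pairs, using $\Omega_1^\pm+\Omega_2^\pm\subset$ the corresponding half-spaces; adding the two inequalities closes the induction.

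\textbf{Step 3: General compact sets.} Approximate compact convex sets from outside by finite unions of boxes. The Minkowski sum of the approximants converges in Hausdorff distance, and volume is upper semicontinuous under decreasing limits, so the inequality passes to the limit.

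\textbf{Step 4: Equality case.} This is the main obstacle, since the approximation in Step 3 destroys equality information. For convex bodies I would use the Knothe map $T:\Omega_1\to\Omega_2$. It is triangular, with positive diagonal Jacobian entries and constant Jacobian determinant $|\Omega_2|/|\Omega_1|$. The map $x\mapsto\lambda x+(1-\lambda)T(x)$ sends $\Omega_1$ into the Minkowski sum and is injective, so
\[
|\lambda\Omega_1+(1-\lambda)\Omega_2|\ge\int_{\Omega_1}\prod_i\bigl(\lambda+(1-\lambda)\partial_iT_i\bigr)\,dx .
\]
AM--GM on each factor reproves the inequality. Equality then forces all $\partial_iT_i$ to be equal to a constant $c$. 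From there I would show that $T$ is a homothety $x\mapsto cx+v$, either by an inductive slicing argument or by invoking the known result for convex bodies. It follows that $\Omega_2=c\Omega_1+v$. The converse (homothetic sets give equality) is immediate since $\lambda\Omega+(1-\lambda)(c\Omega+v)=(\lambda+(1-\lambda)c)\Omega+(1-\lambda)v$.
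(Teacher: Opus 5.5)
The paper does not prove this statement; it quotes it from Gardner's survey and Schneider's book, so everything rests on your own argument. Steps 1--3 are the standard Hadwiger--Ohmann proof and work as sketched. Take the boxes with disjoint interiors. In Step 3, what you actually need is $|A_k+B_k|\to|A+B|$, which holds because $A_k+B_k$ decreases to $A+B$ by compactness. The inequality half of Step 4 is Knothe's proof and is also fine. The pointwise inequality you need there is $\prod_i(\lambda+(1-\lambda)t_i)^{1/d}\ge\lambda+(1-\lambda)(\prod_i t_i)^{1/d}$, with equality iff all $t_i$ coincide. This is Step 1 again, not AM--GM factor by factor.

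The genuine gap is the last step of Step 4. Knowing $\partial_iT_i\equiv c=(|\Omega_2|/|\Omega_1|)^{1/d}$ for the Knothe map in one fixed frame does not imply that $T$ is a homothety, because the entries $\partial_jT_i$ with $j<i$ are unconstrained. For example, take $\Omega_1=[0,1]^2$ and $\Omega_2=\{0\le y_1\le 1,\ y_1\le y_2\le y_1+1\}$. With $x_1$ ordered first, the Knothe map is the shear $T(x)=(x_1,x_1+x_2)$, so $\partial_1T_1=\partial_2T_2=1=c$, yet the sets are not homothetic. Here $|\lambda\Omega_1+(1-\lambda)\Omega_2|=1+\lambda(1-\lambda)$. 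The whole defect sits in the first inequality $|\lambda\Omega_1+(1-\lambda)\Omega_2|\ge|S(\Omega_1)|$, with $S=\lambda\,\mathrm{id}+(1-\lambda)T$, and the diagonal condition cannot see it. ``Invoking the known result for convex bodies'' is circular, since that result is exactly the equality case you are proving. The ``inductive slicing argument'' is the delicate part of Kneser--S\"uss type proofs, and you do not supply it.

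You can close the gap cheaply within your framework, as follows.
\begin{enumerate}
\item Equality is coordinate-free, so build the Knothe map in an orthonormal frame whose first vector is an arbitrary unit vector $e$. Only the first component is needed.
\item Equality forces $T_1'=c$ a.e.\ on the open interval $\{x\cdot e: x\in\Omega_1\}$. Here $T_1=F_2^{-1}\circ F_1$, where $F_i$ is the distribution function of $x\cdot e$ under normalized Lebesgue measure on $\Omega_i$. This map is $C^1$ there, so $T_1(s)=cs+v(e)$.
\item Hence the law of $x\cdot e$ on $\Omega_2$ is the image of the law on $\Omega_1$ under $s\mapsto cs+v(e)$.
\item Comparing right endpoints of the supports gives $h_{\Omega_2}(e)=c\,h_{\Omega_1}(e)+v(e)$, where $h_K(e)=\max_{x\in K}x\cdot e$.
\item Comparing means gives $v(e)=(g_2-cg_1)\cdot e$, where $g_i$ is the centroid of $\Omega_i$.
\item Therefore $h_{\Omega_2}(e)=h_{c\Omega_1+g_2-cg_1}(e)$ for every unit $e$, i.e.\ $\Omega_2=c\,\Omega_1+(g_2-cg_1)$.
\end{enumerate}
Your converse is correct; note that it uses convexity through $\lambda\Omega+(1-\lambda)c\,\Omega=(\lambda+(1-\lambda)c)\Omega$.
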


Let $\Omega$ be a open, bounded and convex set in $\R^d$, and $ j<d$. We define for each $x \in \R^j$ the section
\[
    S_x = \{ x' \in \R^{d-j} \, \colon \, (x,x') \in \Omega \}.
\]
We denote with $\omega \subseteq \R^j$ the projection of $\Omega$ onto the first $j$ coordinates and define
\[
 p:\omega \to \R, \;\;    p (x) = \Ho^{d-j}\bigl( S_x \cap \Omega \bigr).
\]
In case $j=1$, the function $p^\frac{1}{d-1}$ is usually referred as the  {profile function} of $\Omega$.

As consequence of the Brunn-Minkowski inequality, we have the following
\begin{corollary}
    \label{corollary_Brunn_Minkowski}
    Let $\Omega \subset \R^d$ be a convex set and $p:\omega\to \R$ defined as above. Then $p$ is $\frac{1}{d-j}$-concave, i.e. $p^\frac{1}{d-j}$ is concave.
\end{corollary}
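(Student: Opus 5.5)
The plan is to apply the Brunn--Minkowski inequality (Theorem \ref{theo:Brunn_Minkowski}) in dimension $d-j$ to the sections $S_x$, after observing that convexity of $\Omega$ makes the section map concave under Minkowski combination. Fix $x,y\in\omega$ and $\lambda\in(0,1)$. The first step is the inclusion
\[
\lambda S_x + (1-\lambda) S_y \subseteq S_{\lambda x+(1-\lambda)y}.
\]
To prove it, take $x'\in S_x$ and $y'\in S_y$. Then $(x,x')$ and $(y,y')$ lie in $\Omega$. By convexity of $\Omega$, the point $\bigl(\lambda x+(1-\lambda)y,\ \lambda x'+(1-\lambda)y'\bigr)$ also lies in $\Omega$, which is exactly the claim. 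In particular $\omega$ is convex, since it is the projection of a convex set.

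The second step uses this inclusion. The sets $S_x\subset\R^{d-j}$ are open, bounded and convex, so we may pass to their closures; this does not change the $\Ho^{d-j}$ measure, because convex sets have negligible boundary. Since $x,y\in\omega$, both sections are nonempty and open, so they have nonempty interior. Applying \eqref{eq:Brunn_Minkowski_inequality} in $\R^{d-j}$ to $\overline{S_x}$ and $\overline{S_y}$, together with monotonicity of measure, gives
\[
p(\lambda x+(1-\lambda)y)^{\frac{1}{d-j}} \ \ge\ \mathcal L^{d-j}\bigl(\lambda \overline{S_x}+(1-\lambda)\overline{S_y}\bigr)^{\frac1{d-j}} \ \ge\ \lambda p(x)^{\frac1{d-j}}+(1-\lambda)p(y)^{\frac1{d-j}}.
\]
This is precisely the concavity of $p^{1/(d-j)}$ on $\omega$. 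Here $S_x\cap\Omega$ in the definition of $p$ is read as $S_x$, and $\Ho^{d-j}$ coincides with $\mathcal L^{d-j}$ on $\R^{d-j}$.

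The only mild obstacle is technical. The first issue is that the closure of a Minkowski sum of open sets relates to the sum of the closures. This is handled because $\lambda\overline{S_x}+(1-\lambda)\overline{S_y}\subseteq\overline{\lambda S_x+(1-\lambda)S_y}\subseteq \overline{S_{\lambda x+(1-\lambda)y}}$, and the boundary of a convex set has measure zero. The second issue is the degenerate case $d-j=1$ or points near $\partial\omega$. This case is harmless: the sections are intervals, Theorem \ref{theo:Brunn_Minkowski} reduces to additivity of length, and we only evaluate $p$ on the open set $\omega$, where the sections are nonempty.
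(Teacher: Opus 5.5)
Your proof is correct and follows the standard argument the paper intends when it states this corollary as a direct consequence of Theorem \ref{theo:Brunn_Minkowski}: convexity gives the section inclusion $\lambda S_x+(1-\lambda)S_y\subseteq S_{\lambda x+(1-\lambda)y}$, and Brunn--Minkowski in $\R^{d-j}$ applied to the closures then gives the concavity of $p^{\frac{1}{d-j}}$. Passing to closures and using that convex sets have null boundary is a sound way to meet the compactness hypothesis in the paper's version of the inequality.
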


We will consider the Hausdorff distance on the class of convex sets.
\begin{definition}
    Let $\Omega_1, \, \Omega_2 \subset \R^d$ two convex sets. The  {Hausdorff distance} between $\Omega_1$ and $\Omega_2$ is defined as
    \begin{equation}
        \label{Hausdorff_distance}
        d_{\mathcal{H}}(\Omega_1,\Omega_2) = \inf \{ \eps > 0 \, : \, \Omega_1 \subset \Omega_2 +\eps B_1, \, \Omega_2 \subset \Omega_1 + \eps B_1 \},
    \end{equation}
    where $B_1$ is the ball of radius $1$ in $\R^d$.
\end{definition}

We recall the following compactness property of the Hausdorff distance (known as Blaschke selection theorem), see e.g.\cite[Theorem 2.2.25]{Henrot_Pierre}.
\begin{theorem}
   Let $\{ \Omega_n \}$ be a sequence of convex sets contained in a fixed compact set $K \subset \R^d$. Then, there exist a convex set $\Omega \subset K$ and a subsequence, still denoted by $\{ \Omega_n \}$, such that $\Omega_n$ converges to $\Omega$ in the Hausdorff sense.
\end{theorem}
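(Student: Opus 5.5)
The statement is the Blaschke selection theorem. We prove it by building a Cauchy subsequence in the Hausdorff metric and using completeness of the space of nonempty compact subsets of $K$. Convexity then passes to the limit. We work with closures: the Hausdorff distance does not see the difference between a convex set and its closure, so we may assume every $\Omega_n$ is compact, convex and nonempty.

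\textbf{Step 1: total boundedness.} Fix $\eps>0$ and a finite $\eps$-net $F=\{y_1,\dots,y_N\}$ of $K$. To each compact $A\subset K$ assign the subset $F_A=\{y_i : \mathrm{dist}(y_i,A)\le \eps\}$.
\begin{itemize}
\item Every $a\in A$ has some $y_i$ with $|a-y_i|\le\eps$, and this $y_i$ lies in $F_A$. Hence $A\subset F_A+\eps B_1$.
\item Conversely, $F_A\subset A+\eps B_1$ by definition.
\end{itemize}
So $d_{\mathcal H}(A,F_A)\le \eps$. There are only $2^N$ possible sets $F_A$, so the family $\{\Omega_n\}$ is covered by finitely many Hausdorff balls of radius $2\eps$.

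\textbf{Step 2: a Cauchy subsequence.} We use a diagonal argument with $\eps=1/m$, $m=1,2,\dots$. At stage $m$, by pigeonhole, infinitely many terms of the current subsequence lie in a single ball of radius $2/m$, and we keep those. The diagonal subsequence is then Cauchy in $d_{\mathcal H}$.

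\textbf{Step 3: existence of the limit.} This is the main obstacle: the compact subsets of $K$ are complete under $d_{\mathcal H}$. Given a Cauchy sequence $(\Omega_n)$, set
\[
\Omega=\bigcap_{n}\overline{\bigcup_{m\ge n}\Omega_m}.
\]
This is the set of limits of convergent sequences $x_{m_j}\in\Omega_{m_j}$ along subsequences. It is a decreasing intersection of nonempty compact sets, so $\Omega$ is nonempty, compact and contained in $K$.

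Now fix $\eps>0$ and choose $n_0$ with $d_{\mathcal H}(\Omega_m,\Omega_n)<\eps$ for all $m,n\ge n_0$.
\begin{itemize}
\item \emph{$\Omega\subset\Omega_n+2\eps B_1$ for $n\ge n_0$.} Any $x\in\Omega$ is within $\eps$ of some $\Omega_m$ with $m\ge n_0$. That set lies within $\eps$ of $\Omega_n$, so $x$ is within $2\eps$ of $\Omega_n$.
\item \emph{$\Omega_n\subset\Omega+2\eps B_1$.} Take $x\in\Omega_n$. Choose indices $n=m_0<m_1<\dots$ with $d_{\mathcal H}(\Omega_p,\Omega_q)<\eps 2^{-j}$ for all $p,q\ge m_j$. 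Build $x_j\in\Omega_{m_j}$ with $x_0=x$ and $|x_{j+1}-x_j|<\eps2^{-j}$. The sequence $(x_j)$ is Cauchy, so it converges to some point of $\Omega$ at distance at most $2\eps$ from $x$.
\end{itemize}
Together these give $d_{\mathcal H}(\Omega_n,\Omega)\le 2\eps$ for $n\ge n_0$, so $\Omega_n\to\Omega$.

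\textbf{Step 4: the limit is convex.} Take $x,y\in\Omega$ and $\lambda\in[0,1]$. Pick $x_n,y_n\in\Omega_n$ with $x_n\to x$ and $y_n\to y$, which is possible by the second inclusion in Step 3 read in reverse, i.e. from the Hausdorff convergence. By convexity of $\Omega_n$, the point $\lambda x_n+(1-\lambda)y_n$ lies in $\Omega_n$. It converges to $\lambda x+(1-\lambda)y$, which is therefore within any $\eps$ of $\Omega$ for large $n$. Since $\Omega$ is closed, $\lambda x+(1-\lambda)y\in\Omega$. Hence $\Omega\subset K$ is convex, which proves the theorem.

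If one wants open limits, replace $\Omega$ by its interior. When the limit has nonempty interior this changes nothing for $d_{\mathcal H}$. When it does not, for instance when the limit is a segment, the limit is just a degenerate convex compact set.
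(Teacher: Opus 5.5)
Your proof is correct and complete. It also does more than the paper, which gives no proof of this statement: it is quoted as the Blaschke selection theorem with a citation to Henrot--Pierre.

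\begin{itemize}
\item Steps 1--2 give total boundedness in $d_{\mathcal H}$ and a Cauchy subsequence.
\item Step 3 proves completeness. You take the upper Kuratowski limit $\bigcap_n\overline{\bigcup_{m\ge n}\Omega_m}$ and control the reverse inclusion with the chain $x_j\in\Omega_{m_j}$, $\abs{x_{j+1}-x_j}<\eps 2^{-j}$.
\item Step 4 passes convexity to the closed limit.
\end{itemize}

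There is one small slip in Step 4. The points $x_n\in\Omega_n$ with $x_n\to x$ come from the first inclusion $\Omega\subset\Omega_n+2\eps B_1$ of Step 3, not from the second one ``read in reverse''. The conclusion is unaffected.

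A shorter, more standard route goes through distance functions.
\begin{itemize}
\item For nonempty compact $A,B\subset K$ one has $d_{\mathcal H}(A,B)=\sup_{x\in K}\abs{d_A(x)-d_B(x)}$.
\item The $d_{\Omega_n}$ are $1$-Lipschitz and bounded on $K$, so Arzel\`a--Ascoli gives a uniformly convergent subsequence. Its limit is then identified as $d_\Omega$, where $\Omega$ is the zero set of the limit.
\item Convexity follows because a closed set is convex if and only if its distance function is convex, and uniform limits of convex functions are convex.
\end{itemize}
That approach packages your Steps 1--3 into a single compactness theorem. It also yields uniform convergence of distance functions, which is how Hausdorff convergence is usually exploited in shape optimization. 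Your argument avoids Ascoli, works verbatim in any compact metric space, and describes the limit set explicitly.

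Your closing remark fits the way the paper uses the theorem. Since $d_{\mathcal H}$ does not distinguish a set from its closure, the limit is naturally a compact convex set that may be degenerate, as in the collapsing case $\Omega_n\to\omega\times\{0\}^{d-j}$.
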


\medskip
\noindent{\bf The Sturm-Liouville problem}.
 Let $m \in \N$, $\omega \subseteq \R^j$ be an open, bounded and convex set, $p \colon \omega \to \R$,  be a non negative $\frac{1}{m}$-concave function. 
 Then, the  embedding of $W^{1,2}(\omega,p)$ into $L^2(\omega,p)$ is compact (see \cite[Lemma 7]{ama-buc-fra1}). Consequently,  the following problem
\begin{equation}
    \label{eq_Sturm_Liouville}
    \begin{cases}
        \displaystyle{ - \div \bigl( p(x) \nabla u(x) \bigr) = \mu_k(\omega,p) p(x) u(x) } & \text{ in } \omega \\[1.4ex]
        \displaystyle{ p(x) \parzder{u}{\nu}= 0} & \text{ on }\partial \omega.
    \end{cases}
\end{equation}
is well posed in a weak (variational) sense and has a sequence of eigenvalues, ordered with multiplicity
$$0=\mu_0(\omega,p)<\mu_1(\omega,p)\le\dots  \mu_k(\omega,p)\le \dots +\infty$$
defined by
  \begin{equation}
        \label{eq_mu_k_h}
        \mu_k(\omega,p) = \inf_{E_k} \sup_{0 \neq u \in E_k} \frac{\displaystyle \int_{\omega} \abs{\nabla \varphi (x)}^2 p(x) \, dx }{\displaystyle \int_{\omega} \varphi(x)^2 p(x) \, dx}.
    \end{equation}
The infimum above is taken among all the $k$-dimensional subspace of $H^1(\omega)$ that are $L^2$-orthogonal to constant functions with respect to $p$ on $\omega$.

If $j = 1$, problem \eqref{eq_Sturm_Liouville} is   a classical Sturm-Liouville problem. In this case we simply denote 
$$\mu_k(\omega, p)= \mu_k(p).$$

\medskip
\noindent{\bf Collapsing domains}.
Assume that $\{ \Omega_n \}$ is a sequence of bounded, open and convex sets in $\R^d$ with diameter $D_{\Omega_n} = 1$. We  denote $E_n$ the John ellipsoid (see \cite{Guzman_differentiation}) of $\Omega_n$, which satisfies, up to a translation, $E_n \subseteq \Omega_n \subseteq dE_n$. Up to a rigid movement we  can assume that for each $n \in \N$
\begin{enumerate}
    
    \item $E_n$ is centered at the origin;

    \item $E_n$ is aligned with the axes;

    \item the semi-axis satisfy
    \[
        1> a_1(\Omega_n) \geq a_2(\Omega_n) \geq \ldots \geq a_d(\Omega_n) > 0.
    \]
\end{enumerate}

Up to a subsequence we can assume that
\[
    a_i(\Omega_n) \to a_i \qquad \forall i \in \{ 1,\ldots,d \}
\]
such that
\[
    a_1 \geq \ldots \geq a_{j} > 0 = a_{j+1} = \ldots = a_d.
\]
The sequence $\{ \Omega_n \}$  is collapsing if  $ j<d$.

Thanks to the compactness property of the Hausdorff distance, we can also assume
\begin{equation}
    \label{eq:convergence_Omega_n_to_omega_times_0}
    \Omega_n \overset{\mathcal{H}}{\longrightarrow} \omega \times \{ 0 \}^{d-j},
\end{equation}
where $\omega \subseteq \mathbb{R}^j$ is an open, convex set of diameter $1$.

For each $n$ we introduce
\begin{equation}
    \label{eq:def_T_n}
    T_n \colon \R^d \to \R^d,
    \qquad
    T_n(x_1,\ldots,x_d) = \biggl( x_1,\ldots, x_j, \frac{x_{j+1}}{a_{j+1}(\Omega_n)},\ldots, \frac{x_d}{a_d(\Omega_n)} \biggr).
\end{equation}
Essentially, when applied to $\Omega_n$, this function rescales the last $d-j$ components in such a way that $\tilde \Omega_n :=T_n(\Omega_n)$ has a John  ellipsoid with semiaxes
\[
    a_1(\Omega_n),\ldots , a_j(\Omega_n), 1,\ldots,1.
\]
So in particular, $\tilde \Omega_n $ is a non degenerating sequence of convex open sets which can be assumed to converge to a set $\tilde \Omega$. Moreover, the projection of $\tilde \Omega$ on the first $j$ variables is $\omega$. 
We introduce $
        p \colon \omega \to \R$, $p(x) = \Ho^{d-j}(S_x \cap \tilde{\Omega})$, $ \forall x \in \omega$.
    Corollary \ref{corollary_Brunn_Minkowski} ensures that $p$ is a $\frac{1}{d-j}$- concave  function.

We recall the following result from \cite[Proposition 8]{ama-buc-fra1}.
\begin{theorem}
    \label{theo:ABF_24}
Under the previous assumptions    \[
        \mu_k(\Omega_n) \to \mu_k(\omega,p),
    \]
    where $\mu_k(\omega,p)$ is the $k$-th eigenvalue of   problem \eqref{eq_Sturm_Liouville}.
       Moreover, if $(u_k^n)$  is a sequence of $L^2$-normalized eigenfunctions associated to $\mu_k(\Omega_n)$, then defining
 \[
        \tilde u_k^n \colon \tilde{\Omega}_n \to \R,
        \qquad
        \tilde u_k^n(x_1,\ldots,x_d) = k_n u_k^n\bigl( x_1,\ldots, x_j, a_{j+1}(\Omega_n) x_{j+1},\ldots, a_d (\Omega_n) x_d \bigr),
    \]
    where $k_n = \sqrt{\prod_{h=j+1}^d a_h(\Omega_n) }$,
we get, up to extracting a subsequence, that 
$$1_{\tilde \Omega_n} \tilde u_k^n \to 1_{\tilde \Omega}   \tilde u_k, \mbox{ strongly in } L^2(\R^d),$$
$$ 1_{\tilde \Omega_n} \nabla \tilde u_k^n \to 1_{\tilde \Omega}\nabla \tilde u_k, \mbox{ strongly in } L^2(\R^d),$$
where $ \tilde u_k(x_1, \dots, x_d):= u_k(x_1, \dots, x_j)$
is a k-th eigenfunction of  problem \eqref{eq_Sturm_Liouville} set on $(\omega, p)$.
\end{theorem}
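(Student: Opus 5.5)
The plan is to work on the rescaled domains $\tilde\Omega_n=T_n(\Omega_n)$, where all the degeneration is moved into the Rayleigh quotient. Write $x=(x_1,\dots,x_j)$, $x'=(x_{j+1},\dots,x_d)$, and $v=k_n\,u\circ T_n^{-1}$ for $u\in H^1(\Omega_n)$. Since the Jacobian of $T_n^{-1}$ is $k_n^2$, the change of variables gives
\[
\frac{\int_{\Omega_n}|\nabla u|^2}{\int_{\Omega_n}u^2}=\frac{\int_{\tilde\Omega_n}\bigl(|\nabla_x v|^2+\sum_{h>j}a_h(\Omega_n)^{-2}|\partial_h v|^2\bigr)}{\int_{\tilde\Omega_n}v^2}.
\]
So $\mu_k(\Omega_n)$ is the $k$-th eigenvalue of an anisotropic quadratic form $Q_n$ on $\tilde\Omega_n$. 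The transverse part of $Q_n$ is penalized by weights $a_h(\Omega_n)^{-2}\to\infty$. The John ellipsoid of $\tilde\Omega_n$ has semiaxes bounded below by $a_j/2>0$ for $n$ large, and $\tilde\Omega_n\subset d\,\tilde E_n$. Hence the $\tilde\Omega_n$ contain a fixed ball and lie in a fixed ball, so they are uniformly Lipschitz. Two consequences follow: there are extension operators $H^1(\tilde\Omega_n)\to H^1(\R^d)$ with uniformly bounded norms, and $1_{\tilde\Omega_n}\to 1_{\tilde\Omega}$ in $L^1$.

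\emph{Upper bound.} Take $\varphi_0=1,\varphi_1,\dots,\varphi_k$, the first eigenfunctions of the weighted problem on $(\omega,p)$, approximated by smooth functions on a neighbourhood of $\bar\omega$. This is possible because, for a $\frac1{d-j}$-concave weight, smooth functions are dense in $W^{1,2}(\omega,p)$, by dilating about an interior point. Set $\psi_i(x,x')=\varphi_i(x)$ on $\tilde\Omega_n$. Then $\partial_h\psi_i=0$ for $h>j$, so the penalized terms vanish. By Fubini and the $L^1$ convergence $1_{\tilde\Omega_n}\to1_{\tilde\Omega}$, the integrals $\int_{\tilde\Omega_n}\psi_i\psi_l$ and $\int_{\tilde\Omega_n}\nabla_x\psi_i\cdot\nabla_x\psi_l$ converge to $\int_\omega\varphi_i\varphi_l\,p$ and $\int_\omega\nabla\varphi_i\cdot\nabla\varphi_l\,p$. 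After projecting off constants, the span of $\psi_1,\dots,\psi_k$ is an admissible $k$-dimensional test space in the min-max formula \eqref{eq_mu_k}. This gives $\limsup_n\mu_k(\Omega_n)\le\mu_k(\omega,p)+\eps$ for every $\eps>0$.

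\emph{Lower bound and eigenfunctions.} Let $u_0^n,\dots,u_k^n$ be $L^2$-orthonormal eigenfunctions and $\tilde u_i^n$ their transforms; these are orthonormal in $L^2(\tilde\Omega_n)$. By the upper bound, $Q_n(\tilde u_i^n)=\mu_i(\Omega_n)$ stays bounded. Thus the $\tilde u_i^n$ are bounded in $H^1(\tilde\Omega_n)$, and moreover $\int_{\tilde\Omega_n}|\partial_h\tilde u_i^n|^2\le C\,a_h(\Omega_n)^2\to0$ for $h>j$. Extending with the uniform operators and applying Rellich, a subsequence converges strongly in $L^2$, with $1_{\tilde\Omega_n}\tilde u_i^n\to 1_{\tilde\Omega}\tilde u_i$ and gradients converging weakly. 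The transverse derivatives of the limit vanish. Since the sections of the convex set $\tilde\Omega$ are connected, $\tilde u_i(x,x')=u_i(x)$.

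By Fubini, the limits are orthonormal in $L^2(\omega,p)$, and
\[
\int_{\tilde\Omega}|\nabla\tilde u_i|^2=\int_\omega|\nabla u_i|^2p\le\liminf_n\mu_i(\Omega_n)
\]
by lower semicontinuity. The same inequality holds for every linear combination of the $\tilde u_i$. The $(k+1)$-dimensional space spanned by $u_0,\dots,u_k$ contains the constants, and its part orthogonal to constants gives a $k$-dimensional test space in \eqref{eq_mu_k_h}. This yields $\mu_k(\omega,p)\le\liminf_n\mu_k(\Omega_n)$, so $\mu_k(\Omega_n)\to\mu_k(\omega,p)$.

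It follows that the energies converge, $Q_n(\tilde u_k^n)\to\int_\omega|\nabla u_k|^2p$. Weak convergence plus convergence of norms upgrades the gradient convergence to strong $L^2$ convergence, and also forces the penalized transverse terms to vanish in the limit. Passing to the limit in the weak eigenvalue equation, tested against functions of $x$ only, shows that $u_k$ is a $k$-th eigenfunction of \eqref{eq_Sturm_Liouville}.

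\emph{Main obstacle.} The delicate step is compactness uniformly in $n$ near the part of $\partial\omega$ where $p$ degenerates. It needs the uniform Lipschitz/extension bound for $\tilde\Omega_n$, which is derived from the John-ellipsoid normalization. It also needs the density of smooth functions in $W^{1,2}(\omega,p)$, which is used in the upper bound. For both I would rely on the concavity of $p^{1/(d-j)}$, as in \cite[Lemma 7]{ama-buc-fra1}.
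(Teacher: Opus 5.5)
Your proposal is correct and follows the same route as the paper's sketch: rescaling by $T_n$, uniform $H^1$ bounds on the non-degenerate $\tilde\Omega_n$, extraction with strong $L^2$ and weak gradient convergence, transverse independence of the limit, and strong gradient convergence from convergence of norms. You also write out the min-max upper and lower bounds that the paper delegates to \cite[Proposition 8]{ama-buc-fra1}. The only fix is to swap your last two steps: the identity $\int_\omega|\nabla u_k|^2p=\mu_k(\omega,p)$, which the energy convergence needs, should come from passing to the limit in the weak equation and then testing it with $u_k$ (via your density result), because the min-max bounds alone do not give it when $\mu_k(\omega,p)$ is multiple.
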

\begin{proof}
    We briefly recall the main idea of the proof, since it is useful for our future  analysis (we refer for the full proof  to \cite[Proposition 8]{ama-buc-fra1}). 
    It holds
    \begin{align*}
        \norma{\tilde u _k^n}_{L^2(\tilde{\Omega}_n)}^2 & = \norma{u_k^n}_{L^2(\Omega_n)}=1,
    \end{align*}
    and 
 $$
     \forall i=1, \dots, j\; \int_{\tilde{\Omega}_n} \biggl( \parzder{\tilde u_k^n}{x_i} \biggr)^{2}   \, dx =  \int_{\Omega_n} \biggl( \parzder{u_k^n}{x_i} \biggr)^{2}    \, dx $$
     $$  \forall i=j+1, \dots, d\; \int_{\tilde{\Omega}_n} \biggl( \parzder{\tilde u_k^n}{x_i}  \biggr)^{2}    \, dx = \bigl( a_i(\Omega_n) \bigr)^2 \int_{\Omega_n} \biggl( \parzder{u_k^n}{x_i}  \biggr)^{2}   \, dx.
  $$
    Since $\tilde \Omega_n$ are uniformly convex and $\tilde u_{n}^k \in H^1 \bigl( \tilde{\Omega}_n \bigr)$ are uniformly bounded in $H^1$ we can assume that there exists $\tilde u_k \in H^1(\tilde{\Omega})$ such that (see \cite[Section 3.7]{Henrot_Pierre})
    \begin{align*}
        \tilde u _k^n \1_{\tilde{\Omega}_n} & \to \tilde u_k \1_{\tilde{\Omega}} & & \text{ in }L^2 \\
        \bigl( \nabla \tilde u_k^n \bigr) \1_{\tilde{\Omega}_n} & \to \nabla \tilde u_k \1_{\tilde{\Omega}} && \text{  weakly  in }L^2.
    \end{align*}
    We  conclude with
     \[
        \tilde u_{k}(x,x') = u_k(x),
     \]
   meaning  that is $\tilde u_k$ is constant in variables $x'$. Following \cite[Proposition 8]{ama-buc-fra1}, it is shown that $ \tilde u_k$ is the $k$-th eigenfunction of problem \eqref{eq_Sturm_Liouville} and that
    \[
        \lim_{n \to +\infty} \mu_k(\Omega_n) = \mu_k(\omega,p) \qedhere.
    \]
 The convergence of the gradients is strong in $L^2$ as a consequence of the convergence of their norms. 
\end{proof}

 \begin{remark}\label{d31}
 In the particular case in which $j=1$ and $\omega =(0,1)$, the sets $\Omega_n$ which have diameter equal to $1$ collapse to $(0,1)\times \{0\}^{d-1}$. After a possible scaling factor (converging to 1) it can be assumed that $(0,1)\times \{0\}^{d-1}$ is the projection of $\tilde \Omega_n$ onto the $x_1$-axis. One can define  $
        p_n: \omega \to \R$, $p_n(x) = \Ho^{d-1}(S_x \cap \tilde \Omega_n)$, $ \forall x \in \omega$. Then $\tilde p_n^\frac{1}{d-1}$ are concave and converge locally uniformly on $(0,1)$ to $p^\frac{1}{d-1}$. 
        
 Then, from the variational formulation, we readily get
 $$\mu_k(p_n) \ge \mu_k(\Omega_n).$$

The statement of Theorem \ref{theo:ABF_24} continues to hold, namely 
$$\mu_k(p_n) \to \mu_k(p),$$ 
so that $\mu_k(p_n) -\mu_k(\Omega_n ) \to0$. Moreover, 
the convergence of eigenfunctions associated to $\mu_k(p_n) $ holds as well. We refer to the proof of
 \cite[Proposition 8]{ama-buc-fra1} and to \cite[Lemma 2.3]{Henrot_Michetti}.
 \end{remark}

Let us introduce the following notation: if $\{ \Omega_n \}$ is a sequence of open, bounded and convex sets converging in the sense of Hausdorff to $\omega \times \{ 0 \}^{d-j}$ where $\omega$ is a bounded, open and convex set of $\R^j$ and $p$ is the function given by Theorem \ref{theo:ABF_24}, we will write
\[
    \Omega_n \to (\omega,p).
\]

\medskip
\noindent{\bf Geometry of convex sets with affine profile functions}.
Let us assume that $\Omega$ is an open, bounded and convex set of diameter $(0,1)\times \{0\}^{d-1}$ and  profile function of the form
\begin{equation}
    \label{eq:expression_p^d-1}
    p(x)^{\frac{1}{d-1}}
    =
    \begin{cases}
        c_1x & \text{ if } x \in [0,\alpha] \\
        ax+b & \text{ if } x \in [\alpha,\beta] \\
        c_2(1-x) & \text{ if } x \in [\beta,1],
    \end{cases}
\end{equation}
where $\alpha, \beta,a,b,c_1,c_2$ are such that $p(x)$ is continuous and $\frac{1}{d-1}$-concave (Figure \ref{fig:graph_of_p}).

\begin{figure}[h!]
    \centering
    \begin{tikzpicture}
        \draw[-latex] (-0.5,0) -- (5,0);
        \draw[-latex] (0,-0.5) -- (0,2.5);

        \filldraw[black] (0,0) circle(1pt) node[anchor = 45] {$0$};
        \filldraw[black] (1.5,0) circle(1pt) node[anchor = north] {$\alpha$};
        \filldraw[black] (3,0) circle(1pt) node[anchor = north] {$\beta$};
        \filldraw[black] (4.5,0) circle(1pt) node[anchor = north] {$1$};

        \draw[black] (0,0) -- (1.5,1.5) -- (3,2) -- (4.5,0);
        \draw[black, dashed] (1.5,1.5) -- (1.5,0);
        \draw[black, dashed] (3,2) -- (3,0);
    \end{tikzpicture}
    \caption{}
    \label{fig:graph_of_p}
\end{figure}
Although the profile function merely measures the volume of the convex set's sections, the linearity of $p^{\frac{1}{d-1}}$  imposes a strict geometric rigidity on $\Omega$. Specifically, this linearity yields equality in the Brunn-Minkowski inequality \eqref{eq:Brunn_Minkowski_inequality} for all sections within the intervals  $[0,\alpha]$, $[\alpha,\beta]$ and $[\beta,1]$,   respectively. The characterization of the equality cases in the Brunn-Minkowski inequality (see, e.g., \cite{Gardner_Brunn_Minkowski}) implies that the sections of $\Omega$  within the intervals $[0,\alpha]$, $[\alpha,\beta]$ and $[\beta,1]$  are homothetic. Combined with the convexity of $\Omega $, this ensures that within each slab  $[0,\alpha] \times \R^{d-1}, [\alpha,\beta] \times \R^{d-1}$ and $[\beta,1] \times \R^{d-1}$, the set $\Omega$ a truncated cone (Figure \ref{fig:truncation_of_a_cone}).

\begin{figure}[h!]
    \centering
    \begin{tikzpicture}
        \draw[-latex] (0,0) -- (8.5,0);
        \draw[-latex] (0,0) -- (0,2.5);
        \draw[-latex] (0,0) -- (-120:3);

        \node[black] at (8.3,-0.3) {$x_1$};
        \node[black] at (-114:2.7) {$x_2$};

        \draw[cyan] (2.5,-0.5) ellipse(0.4 and 1.2);
        \draw[cyan] (5,-0.9) ellipse(0.4 and 1.5);

        \draw[cyan] (0,0) -- (2.5,0.7) -- (5,0.6) -- (7.5,0);
        \draw[cyan] (0,0) -- (2.5,-1.7) -- (5,-2.4) -- (7.5,0);

        \filldraw[black] (0,0) circle(1pt) node[anchor = east] {$0$};
        \filldraw[black] (2.5,0) circle(1pt) node[anchor = north] {$\alpha$};
        \filldraw[black] (5,0) circle(1pt) node[anchor = north] {$\beta$};
        \filldraw[black] (7.5,0) circle(1pt) node[anchor = north] {$1$};

    \end{tikzpicture}
    \caption{}
    \label{fig:truncation_of_a_cone}
\end{figure}

For instance in $[\alpha,\beta]$, because of the homothety of $S_{\beta} \cap \Omega$ and $S_{\alpha} \cap \Omega$ there exist $k \in \mathbb{R}$ and $m \in \mathbb{R}^{d-1}$ such that
\[
    S_{\beta} \cap \Omega = k S_{\alpha} \cap \Omega + m.
\]
Following the expression of $p$, we deduce
\[
    k = \biggl( \frac{p(\beta)}{p(\alpha)} \biggr)^{\frac{1}{d-1}}.
\]
Moreover, since $\Omega$  is convex, the set $( [\alpha, \beta] \times \R^{d-1} ) \cap \Omega$ contains the cone passing through $S_{\alpha} \cap \Omega$ and $S_{\beta} \cap \Omega$ and because of the linearity of the profile function, the sets must coincide.

Let us underline that the previous analysis guarantees that for each $x_1 \in [\alpha,\beta]$ it holds
\begin{equation}
    \label{eq:per_change_variable_sections}
    S_{x_1} \cap \Omega = \biggl[ \frac{p(x_1)}{p(\alpha)} \biggr]^{\frac{1}{d-1}} \bigl( S_{\alpha} \cap \Omega \bigr) + \frac{x_1-\alpha}{\beta-\alpha} m, \qquad \forall x_1 \in [\alpha,\beta] \subset [0,1].
\end{equation}
Now let us define the  function
\[
    c(x_1) \coloneqq \dashint_{S_{x_1} \cap \Omega} x_2 \, d\Ho^{d-1},
\]
where $\dashint$ denotes the averaged integral. The function $c$ is  piecewise affine. Indeed,  for $x_1 \in [\alpha, \beta]$,  a change of variable using \eqref{eq:per_change_variable_sections} yields
\begin{align*}
    \int_{S_{x_1} \cap \Omega} x_2 \, d\Ho^{d-1}
    & =
    \frac{p(x_1)}{p(\alpha)} \int_{S_{\alpha} \cap \Omega} \biggl[  \biggl( \frac{p(x_1)}{p(\alpha)}\biggr)^{\frac{1}{d-1}} y_2  + \biggl( \frac{x_1-\alpha}{\beta-\alpha} m_2 \biggr) \biggr] \, d\Ho^{d-1} \\
    & =
    p(x_1) \Biggl( \frac{1}{p(\alpha)} \int_{S_{\alpha} \cap \Omega} \frac{y_2}{ p(\alpha)^{\frac{1}{d-1}}} \, d \Ho^{d-1} \Biggr) \, p(x_1)^{\frac{1}{d-1}}
    +
    m_2\biggl( \frac{x_1-\alpha}{\beta-\alpha} \biggr) p(x_1),
\end{align*}
therefore
\begin{equation}
    \label{eq:espressione_c}
    c(x_1) = c(\alpha) \biggl( \frac{p(x_1)}{p(\alpha)} \biggr)^{\frac{1}{d-1}} + m_2\biggl( \frac{x_1-\alpha}{\beta-\alpha} \biggr),
\end{equation}
is an affine function since $p(x)^{\frac{1}{d-1}}$ is an affine function.  

We now show that there exists  $K$ such that
\begin{equation}
    \label{eq:momento_di_ordine_2}
    \int_{S_{x_1}\cap \Omega} \bigl( x_2 - c(x_1)\bigr)^2 \, d\Ho^{d-1} = Kp(x_1)^{1+\frac{2}{d-1}} \qquad \forall x_1 \in [\alpha,\beta].
\end{equation}

First of all we notice that \eqref{eq:momento_di_ordine_2} can be written as
\[
    \dashint_{S_{x_1}\cap \Omega} \bigl( x_2 - c(x_1)\bigr)^2 \, d\Ho^{d-1} = Kp(x_1)^{\frac{2}{d-1}},
\]
and we can write
\begin{align*}
    \dashint_{S_{x_1}\cap \Omega} \bigl( x_2 - c(x_1)\bigr)^2 \, d\Ho^{d-1}
    & =
    \dashint_{S_{x_1}\cap \Omega} x_2^2 \, d\Ho^{d-1} -2c(x_1) \dashint_{S_{x_1} \cap \Omega} x_2 \, d\Ho^{d-1} + c(x_1)^2 \\
    & = \dashint_{S_{x_1}\cap \Omega} x_2^2 \, d\Ho^{d-1} -c(x_1)^2,
\end{align*}
where we used the definition of $c(x_1)$.

Let us now compute $\dashint_{S_{x_1} \cap \Omega} x_2^2 \, d\Ho^{d-1}$ with the same change of variable as before:
\begin{align*}
    \dashint_{S_{x_1}\cap \Omega} x_2^2 \, d\Ho^{d-1}
    & =
    \frac{1}{p(\alpha)} \int_{S_{\alpha} \cap \Omega} \biggl[ \biggl( \frac{p(x)}{p(\alpha)} \biggr)^{\frac{1}{d-1}} y_2 + \biggl( \frac{x_1-\alpha}{\beta-\alpha} m_2 \biggr)  \biggr]^2  \, d \Ho^{d-1} \\
    & =
    \biggl( \frac{p(x_1)}{p(\alpha)} \biggr)^{\frac{2}{d-1}} \dashint_{S_{x_1} \cap \Omega} y_2^2 \,  d\Ho^{d-1}
    +
    m_2^2\biggl( \frac{x_1-\alpha}{\beta-\alpha} \biggr)^2
    +
    2 m_2 \biggl( \frac{x_1-\alpha}{\beta-\alpha} \biggr) \biggl( \frac{p(x_1)}{p(\alpha)} \biggr)^{\frac{1}{d-1}} c(\alpha).
\end{align*}
Hence, using \eqref{eq:espressione_c}, we get
\begin{align*}
    \dashint_{S_{x_1}\cap \Omega} \bigl( x_2 - c(x_1)\bigr)^2 \, d\Ho^{d-1}
    =
    \biggl( \frac{p(x_1)}{p(\alpha)} \biggr)^{\frac{2}{d-1}} \biggl[ \dashint_{S_{\alpha} \cap \Omega} y_2^2 \, d\Ho^{d-1} - c(\alpha)^2 \biggr] = Kp(x_1)^{\frac{2}{d-1}},
\end{align*}
where
\[
    K \coloneqq \frac{1}{p(\alpha)^{\frac{2}{d-1}}} \biggl[ \dashint_{S_{\alpha} \cap \Omega} y_2^2 \, d\Ho^{d-1} - c(\alpha)^2 \biggr],
\]
so \eqref{eq:momento_di_ordine_2} holds true.

     Of course, the same computation can be done on  $[0,\alpha]$ and $[\beta,1]$ and, as we will detail in the proof of Theorem \ref{main_theorem_k=2}, the constant $K$ is the same on all intervals. It is  constant on each interval and continuous at the  endpoints of the intervals.

\medskip
\noindent{\bf 
Extension of Kr\"{o}ger inequalities to weighted eigenvalues}.
We recall the following result from \cite{Kro99} and \cite{Henrot_Michetti}.
\begin{theorem}
    \label{theo:Henrot_Michetti}
  Let $k,d \in \N$, $d\ge 2$, $k\ge 1$. Then for every $\Omega \subseteq \R^d$, $\Omega$  open, bounded and convex
 \begin{equation}
    \label{eq:Henrot_Michetti_e_Kroger_intro.bis}
    D_{\Omega}^2\mu_k(\Omega) <\mu_{k,d}^* \qquad \end{equation}
    where the  sharp value $\mu_{k,d}^* $ equals
    \begin{equation}
        \label{eq:sup_maximization_problem_p_d}
         \mu_{k,d}^*=\sup \biggl\{ \mu_k(p) \, \colon \, p \in L^{\infty}(0,1), \, p \geq 0, \, p \text{ is } \frac{1}{d-1} \text{ concave, } p \not \equiv 0  \biggr\}.
    \end{equation}
Moreover
\begin{itemize}
    \item if $d = 2$ then 
    \[
    \mu_{k,d}^*= \left(2 j_{0,1} + (k - 1)\pi \right)^2
    \]
    
    \item if $d = 3$ then 
    \[
    \mu_{k,d}^*= \left((k + 1)\pi \right)^2
    \]
    
    \item if $d \geq 4$ then:
    \begin{itemize}
        \item if $k$ is odd then 
        \[
        \mu_{k,d}^*=4\, j_{\frac{d-2}{2},\,\frac{k+1}{2}}^2
        \]
        
        \item if $k$ is even then 
        \[
        \mu_{k,d}^*=\left(
        j_{\frac{d-2}{2},\,\frac{k}{2}} + 
        j_{\frac{d-2}{2},\,\frac{k+2}{2}}
        \right)^2
        \]
    \end{itemize}
\end{itemize}
Denoting by $p_{k,d}^*$ a function for which  the maximum is achieved in \eqref{eq:sup_maximization_problem_p_d},
    \begin{itemize}
        \item If $k = 1$, then the graph of $(p_{1,d}^*)^{\frac{1}{d-1}}$ is an isosceles triangle having vertex in $x=\frac{1}{2}$.

        \item If $k \geq 2$, then $(p_{k,d}^*)^{\frac{1}{d-1}}$ is a piecewise affine function such that
        \begin{itemize}
        \item if $d = 3$, any function such that $p_{k,3}^*(0)=p_{k,3}^*(1) = 0$ and the graph of $(p_{k,3}^*)^{\frac{1}{2}}$ is made by at most $k+1$ segments;

        \item if $d \neq 3$, the graph of $(p_{k,d}^*)^{\frac{1}{d-1}}$ is an isosceles trapezoid.
    \end{itemize}

    \end{itemize}

\end{theorem}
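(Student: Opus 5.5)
The plan is to prove the recalled statement in three stages: reduce to a one-dimensional weighted problem, show that the supremum in \eqref{eq:sup_maximization_problem_p_d} is attained and cannot be reached by a genuine convex set, and then identify the maximizers explicitly.

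\emph{Reduction to one dimension.} By scaling, assume $D_\Omega=1$, with the diameter realized by the segment $(0,1)\times\{0\}^{d-1}$. Since this segment is a diameter, the projection of $\Omega$ onto the $x_1$-axis is exactly $(0,1)$. Let $p(x)=\Ho^{d-1}(S_x\cap\Omega)$; by Corollary \ref{corollary_Brunn_Minkowski} it is $\frac{1}{d-1}$-concave. Test functions depending only on $x_1$ give, in \eqref{eq_mu_k}, exactly the Rayleigh quotients of \eqref{eq_mu_k_h}, as already noted in Remark \ref{d31}. Hence $\mu_k(\Omega)\le\mu_k(p)\le\mu^*_{k,d}$, where $\mu^*_{k,d}$ is the supremum in \eqref{eq:sup_maximization_problem_p_d}.

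\emph{Strictness and sharpness.} For strictness, suppose $\mu_k(\Omega)=\mu_k(p)$. Then the min-max forces $\Omega$ to have a $k$-th eigenfunction depending only on $x_1$. Such a function would have to satisfy $\partial_\nu u=0$ on the lateral boundary, whose normals are not orthogonal to $e_1$ near the ends; this contradicts unique continuation and the ODE structure. For sharpness, take a maximizer $p^*_{k,d}$ and build thin bodies of revolution $\Omega_n$ whose sections are balls of volume $\varepsilon_n p^*_{k,d}(x_1)$. These are convex because $(p^*)^{1/(d-1)}$ is concave, and they collapse to $((0,1),p^*_{k,d})$ after the rescaling $T_n$. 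Theorem \ref{theo:ABF_24} then gives $\mu_k(\Omega_n)\to\mu_k(p^*_{k,d})$, with diameters tending to $1$.

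\emph{Existence of a maximizer.} Take a maximizing sequence normalized by $\max p_n^{1/(d-1)}=1$. Concavity gives local uniform convergence of $p_n^{1/(d-1)}$ to some concave limit, and Remark \ref{d31} gives continuity of $\mu_k(\cdot)$ along this convergence. It then remains to exclude the degenerate limit $p\equiv 0$, which the normalization prevents.

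\emph{Identification of the maximizers (main obstacle).} First I would show by a perturbation/optimality argument that on any interval where $(p^*)^{1/(d-1)}$ is strictly concave, $u'^2-\mu u^2$ must vanish (in the weighted sense) for the relevant eigenfunctions. Perturbing the weight by admissible concave variations $\delta p$, the derivative of $\mu_k$ is $\int(u'^2-\mu_k u^2)\,\delta p$. Combining this with the Sturm oscillation theory of $u$ (at most $k$ nodal points), I would conclude that $(p^*)^{1/(d-1)}$ is piecewise affine with at most $k+1$ pieces, vanishing at $0$ and $1$. On each affine piece the equation $-(p u')'=\mu p u$ with $p=(ax+b)^{d-1}$ is a Bessel equation of order $\frac{d-2}{2}$, solved by $r^{-(d-2)/2}J_{(d-2)/2}(\sqrt\mu\, r)$ and the companion $Y$-solution. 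Matching $u$ and $pu'$ at the junction points and optimizing over the break points yields the following. For $k=1$ the maximizer is the symmetric triangle, with value $4j^2_{\frac{d-2}{2},1}$; for $d=2$ this equals $4j_{0,1}^2$, and for $d=3$ it equals $(2\pi)^2$ since $j_{1/2,1}=\pi$. For $k\ge2$ the maximizer is a trapezoid, whose middle plateau contributes a cosine and whose cone ends contribute $j$-zeros; this gives the listed formulas. The degeneracy for $d=3$ comes from the order-$\frac12$ Bessel functions being elementary, which makes every admissible broken line optimal.

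This explicit optimization over break points, especially proving that the trapezoid beats all other piecewise affine configurations for $d\ne3$, is the step I expect to be hardest. I would first try a monotonicity comparison of $\mu_k$ as a single break point is moved, using the Bessel-function representation on each piece.
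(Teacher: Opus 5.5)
The paper gives no proof of this statement. It is quoted from Kr\"oger and Henrot--Michetti, and only its soft half (test functions depending on $x_1$ alone, collapsing bodies realizing a given profile) reappears in the paper's own arguments.

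Your treatment of that soft half is sound:
\begin{itemize}
\item the reduction $\mu_k(\Omega)\le\mu_k(p)\le\mu^*_{k,d}$;
\item strictness: an optimal $k$-dimensional space of functions of $x_1$ contains a Neumann eigenfunction of $\Omega$, trigonometric in $x_1$, and $u'\nu_1=0$ on $\partial\Omega$ then forces a cylindrical lateral boundary, incompatible with $D_\Omega=1$;
\item sharpness via thin bodies of revolution;
\item existence of a maximizer, since the tent minorant of a concave function with maximum $1$ passes to the limit.
\end{itemize}

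The actual content of the theorem, namely the values of $\mu^*_{k,d}$ and the shape of $p^*_{k,d}$, is not established. You leave the comparison of configurations open. In addition, the passage to ``piecewise affine with at most $k+1$ pieces'' is not justified. Vanishing of $u'^2-\mu u^2$ on intervals where $q=(p^*)^{1/(d-1)}$ is strictly concave only excludes such intervals. It does not exclude a singular or accumulating $q''$, and the number of nodes of $u$ does not bound the number of corners. You need the multiplier structure of Section~4 of the paper:
\begin{itemize}
\item a multiplier $\xi\ge0$ with $\xi''$ proportional to $(u'^2-\mu u^2)q^{d-2}$ in the interior (admissible variations are $\delta p=(d-1)q^{d-2}\delta q$, not arbitrary concave $\delta p$);
\item $\xi=\xi'=0$ on $\mathrm{supp}\,q''$, so this function changes sign twice between consecutive corners;
\item a count of the sign changes of $u'^2-\mu u^2$, which is positive at nodes and negative at critical points of $u$.
\end{itemize}

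More seriously, the configuration you aim for is wrong when $d\ge4$. Let $\nu=\frac{d-2}{2}$. Take a profile made of a cone of length $\ell$, a plateau of length $L$ and a cone of length $\ell'$, with $u_k$ vanishing at the corners. It satisfies $\sqrt\mu\,\ell=j_{\nu,m}$, $\sqrt\mu\,L=r\pi$, $\sqrt\mu\,\ell'=j_{\nu,m'}$, with $m+m'+r=k+1$. For $\nu>\frac12$, Sturm comparison applied to $\sqrt t\,J_\nu(t)$ gives $j_{\nu,i+1}-j_{\nu,i}>\pi$. So trading a half-period of the plateau for one more Bessel zero in a cone strictly increases $\mu$, and the best choice is $r=0$.

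Indeed, $4j^2_{\nu,(k+1)/2}$ and $(j_{\nu,k/2}+j_{\nu,k/2+1})^2$ are exactly $\mu_k$ of double cones. The apex is at $\frac12$ for $k$ odd and at $j_{\nu,k/2}/(j_{\nu,k/2}+j_{\nu,k/2+1})$ for $k$ even. Your isosceles trapezoid ($m=m'=1$, $r=k-1$) instead gives $(2j_{\nu,1}+(k-1)\pi)^2$, which is strictly smaller for $k\ge2$.

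So ``the trapezoid beats all other configurations for $d\ne3$'' is true only for $d=2$, where $\nu=0$ and the gaps are below $\pi$. For $d\ge4$, your monotonicity argument would have to shrink the plateau to length zero, and for even $k$ the optimum is not even symmetric. The shape description in the recalled statement matches its own formulas only for $d=2$ and, degenerately, for odd $k$.

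Similarly, for $d=3$ not every broken line is optimal. The corners must sit at zeros of $u_k$; for $k=2$ the symmetric double cone gives only $4j_{3/2,1}^2<9\pi^2$.
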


%
%
%
%

We now extend Theorem \ref{theo:Henrot_Michetti} to weighted eigenvalue problems of the form \eqref{eq_Sturm_Liouville}, set on $(\omega, p)$.
\begin{lemma}
    \label{theo:extension_Kroger}
    Let $d\geq 2$, $1 \leq j < d$ and let $\omega \subseteq \R^j$ be an open, bounded and convex set and $p \colon \omega \to \R_+$ a function $\frac{1}{d-j}$-concave. Then
    \begin{equation}
        \label{eq:per_lemma_1D_function}
        \mu_k(\omega,p) \leq \frac{\mu_{k,d}^*}{D_{\omega}^2}.
    \end{equation}
   Moreover equality occurs only when $j = 1$ and $p = p_{k,d}^*$.
\end{lemma}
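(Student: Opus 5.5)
The plan is to realize the weighted problem $(\omega,p)$ as a limit of genuine convex domains in $\R^d$, and then pass to the limit in the Kröger inequality of Theorem \ref{theo:Henrot_Michetti}. By scaling we may assume $D_\omega=1$. Given $\omega\subset\R^j$ and $p$ that is $\frac{1}{d-j}$-concave, we build the convex body
\[
\Omega_\eps=\bigl\{(x,x')\in \omega\times\R^{d-j}\,:\, |x'|< \eps\, r(x)\bigr\},\qquad r(x)=\bigl(p(x)/\omega_{d-j}\bigr)^{\frac{1}{d-j}},
\]
where $\omega_{d-j}$ is the volume of the unit ball of $\R^{d-j}$. Convexity of $\Omega_\eps$ follows from the concavity of $r$. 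The sections have $\Ho^{d-j}$-measure $\eps^{d-j}p(x)$, and $\Omega_\eps\to\omega\times\{0\}^{d-j}$ in the Hausdorff sense. After the rescaling $T_n$ of \eqref{eq:def_T_n} the limit weight is exactly $p$, up to a constant factor that does not affect eigenvalues. Theorem \ref{theo:ABF_24} then gives $\mu_k(\Omega_\eps)\to\mu_k(\omega,p)$. Since $D_{\Omega_\eps}\to D_\omega=1$, applying \eqref{eq:Henrot_Michetti_e_Kroger_intro.bis} to $\Omega_\eps$ and letting $\eps\to0$ yields \eqref{eq:per_lemma_1D_function}.

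For $j=1$ the inequality is just the definition \eqref{eq:sup_maximization_problem_p_d} of $\mu^*_{k,d}$. Equality $\mu_k(p)=\mu^*_{k,d}$ means $p$ is a maximizer, and the characterization of maximizers in \cite{Kro99,Henrot_Michetti} identifies it as $p^*_{k,d}$. Here we should interpret $p^*_{k,d}$ as the class of maximizers described in Theorem \ref{theo:Henrot_Michetti}, which is not unique for $d=3$, possibly after reflection.

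The real content is strictness for $j\ge2$. Here I would exploit the extra dimension. Take a diameter segment of $\omega$, say in direction $e_1$ with $\omega\subset\{0<x_1<1\}$. Reduce to the one-dimensional problem in $x_1$ with weight
\[
q(t)=\int_{\omega\cap\{x_1=t\}}p\,d\Ho^{j-1}.
\]
Testing \eqref{eq_mu_k_h} with functions of $x_1$ alone gives $\mu_k(\omega,p)\le\mu_k(q)$. By the Borell–Brascamp–Lieb (Brunn–Minkowski) inequality, $q$ is $\frac{1}{d-1}$-concave on $(0,1)$, because $p$ is $\frac1{d-j}$-concave on a $(j-1)$-dimensional convex section family. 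Hence $\mu_k(q)\le\mu^*_{k,d}$.

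It remains to show this cannot be an equality when $j\ge 2$. Suppose $\mu_k(\omega,p)=\mu_k(q)=\mu^*_{k,d}$. The first equality forces the $k$-dimensional optimal space of functions of $x_1$ to be spanned by genuine eigenfunctions of $(\omega,p)$. At the same time, $q=p^*_{k,d}$ forces equality in Borell–Brascamp–Lieb. I expect the main obstacle to be turning this into a contradiction, and I see two routes. The first is a rigidity argument: $q^{1/(d-1)}$ piecewise affine with zero endpoint values forces the sections to be homothetic cones shrinking to points at $x_1=0,1$. Then one perturbs with a test function depending on a transverse variable $x_2$, for instance $x_2-c(x_1)$ as in the moment computations above. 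This should produce a $(k+1)$-dimensional space with Rayleigh quotient at most $\mu^*_{k,d}$, contradicting simplicity or the min-max. The second route, which I would try first if the first is messy, is to observe that the maximizing profiles are only approached by collapsing sequences with $a_2\to0$. We then obtain a domain $\Omega_\eps$ of diameter close to $1$ that is non-collapsing in two directions and has eigenvalue arbitrarily close to $\mu^*_{k,d}$. A compactness argument in the spirit of Theorem \ref{theo:ABF_24}, combined with strictness of \eqref{eq:Henrot_Michetti_e_Kroger_intro.bis} for nondegenerate limits, would then rule this out.
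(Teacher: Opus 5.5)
Your proof of the inequality itself is fine. The reduction to the one-variable weight $q$ is exactly the paper's route; $q$ is $\frac{1}{d-1}$-concave by Brunn--Minkowski applied to $\{(x,y')\in\omega\times\R^{d-j}:|y'|<r(x)\}$. The approximation by $\Omega_\eps$ is a legitimate alternative. The gap is the strict inequality for $j\ge2$, which is the real content of the lemma, and neither of your two routes delivers it.

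\textbf{Route 2 is circular.} $\Omega_\eps$ collapses onto the $j$-dimensional set $\omega\times\{0\}^{d-j}$. So strictness of Kr\"{o}ger's inequality for non-degenerate limits does not apply, and a compactness argument in the spirit of Theorem \ref{theo:ABF_24} only brings you back to the weighted problem $(\omega,p)$. The fact that maximizing sequences must collapse to a segment is not contained in Theorem \ref{theo:Henrot_Michetti}. It is precisely what this lemma is used for in the proof of Theorem \ref{main_theorem_k=2}.

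\textbf{Route 1 aims at the wrong target.} A $(k+1)$-dimensional space with Rayleigh quotient at most $\mu^*_{k,d}$ only gives $\mu_{k+1}(\omega,p)=\mu_k(\omega,p)$, and nothing is known about simplicity of $\mu_k(\omega,p)$ when $j\ge2$. To contradict $\mu_k(\omega,p)=\mu^*_{k,d}$ you need a $k$-dimensional space with quotient strictly below $\mu^*_{k,d}$. The perturbation $u_k(x_1)\bigl(1+\tau(x_2-c(x_1))^2\bigr)$ cannot produce one at first order: in the equality case $u_k$ is a genuine eigenfunction of $(\omega,p)$, so the first variation in $\tau$ vanishes. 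In the main theorem this mechanism works precisely because the one-dimensional eigenfunction is \emph{not} an eigenfunction of the higher-dimensional problem.

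\textbf{The missing idea is to extract the contradiction from the eigenvalue equation itself.} First state correctly what equality gives; it does not say that the whole span consists of eigenfunctions. $\mathrm{span}\{u_1,\dots,u_k\}$ meets the $p$-orthogonal complement of the first $k-1$ eigenfunctions of $(\omega,p)$. Any nonzero element there has quotient exactly $\mu_k(\omega,p)$, hence is an eigenfunction. Simplicity of the Sturm--Liouville eigenvalue $\mu_k(q)$ then identifies it with $u_k$.

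The paper proceeds as follows:
\begin{enumerate}
\item It tests the weak equation with $\varphi(x_1)\psi(x')$ and obtains $-u_k''-u_k'\,\partial_{x_1}p/p=\mu^*_{k,d}u_k$ on every line $S_{x'}(\omega)$.
\item It deduces that $\partial_{x_1}\log p$ does not depend on $x'$, so $p$ cannot vanish on the lateral part of $\partial\omega$.
\item The natural boundary condition $u_k'\,p=0$ there then forces $u_k'=0$ on an interval of $x_1$, so $u_k$ is constant, a contradiction.
\end{enumerate}

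Alternatively, you can rescue Route 1 by turning it around. For $j\ge2$, the function $u_k(x_1)\bigl(x_2-c(x_1)\bigr)^2$ is an admissible test function, with $c$ the $p$-weighted mean of $x_2$ on $S_{x_1}(\omega)$. Since $\nabla u_k$ is parallel to $e_1$, the weak equation gives exactly $\gamma=0$ for the analogue of the quantity $\gamma$ in the main proof. But the conical structure forced by $q=p^*_{k,d}$, the moment identity \eqref{eq:momento_di_ordine_2}, and the integrations by parts carried out there show $\gamma>0$.
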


\begin{proof}
    If  $j=1$ there is nothing to prove.  In this case $p$ is a $\frac{1}{d-1}$-concave function and so
    \[
        \mu_k \bigl( [0,D_{\omega}],p \bigr) \leq \frac{\mu_{k,d}^*}{D_{\omega}^2},
    \]
    with equality  if and only if $p = p_{k,d}^*$.

    Let us assume $j \geq 2$. First, we prove that inequality \eqref{eq:per_lemma_1D_function} holds.
The diameter of $\omega$ being placed along the $x_1$ axis, we  decompose $x \in \R^j$ as $x=(x_1,x') \in \R \times \R^{j-1}$ and  define

    \begin{align*}
      \forall x_1, \;\;  S_{x_1}(\omega) &= \bigl \{ (x_1,x') \colon x' \in \R^{j-1}, (x_1,x') \in \omega \bigr\} \\
        \forall x', \;\;  S_{x'}(\omega) &= \bigl \{ (x_1,x') \colon  x_1\in \R,  (x_1,x') \in \omega \bigr\}.
    \end{align*}
We set
    \[
         \tilde{p}(x_1) = \int_{S_{x_1}(\omega)} p(x_1,x') \, dx' \qquad \forall x_1 \in [0,D_{\omega}].
    \]
    Then  $\tilde p_1$ is $\frac{1}{d-1}$ concave  and
    \begin{equation}
        \label{eq:prima_stima_inequality}
        \mu_k(\omega,p) \leq \mu_k\bigl( [0,D_{\omega}], \tilde{p} \bigr) \leq \mu_k\bigl( [0,D_{\omega}],p_{k,d}^* \bigr) = \frac{\mu_{k,d}^*}{D_{\omega}^2},
    \end{equation}
   where the first inequality follows from the variational characterization of $\mu_k(\omega,p)$ by choosing test functions that depend only on the first variable, while the second follows from Theorem \ref{theo:Henrot_Michetti}. 
    
We analyse the equality case. Assume for contradiction
    \[
        \mu_k(\omega,p) = \frac{\mu_{k,d}^*}{D_{\omega}^2}. 
    \]
  As a consequence of  \eqref{eq:prima_stima_inequality}, we have
    \[
        \tilde{p} = p_{k,d}^*,
    \]
    and from the fact that $p_{k,d}^*$ is piecewise affine, the equality case of Brunn-Minkowski, Theorem \ref{theo:Brunn_Minkowski}, implies that the set
    \[
        \Omega = \{ (a,b) \in \R^j \times \R^{d-j} \colon a \in \omega, \norma{b} < p(a)^{\frac{1}{d-j}} \},
    \]
 has a piecewise conical structure. Moreover the eigenfunction $u_k$ of $\mu_k\bigl( [0,D_{\omega}],\tilde{p} \bigr)$ is also eigenfunction of single variable $x_1$ for $(\omega,p)$.

    We use the fact that $u = u_k(x_1)$ is the $k$-th eigenfunction associated with $(\omega,p)$ and test the weak formulation of the equation with functions of the form $\varphi(x_1)\psi(x')$, where $x' \in S_{x_1}(\omega)$.
    
Let us first consider $\psi(x')$. Since $x_1$ and $x'$ are separated variables, the gradients of $u_k$ and $\psi$ are pointwise orthogonal, so that
    \[
        0 = \int_{\omega} \nabla u_k \nabla \psi  p(x_1,x')dx =  \mu_{k,d}^* \int_{\omega} u_k(x_1) \psi(x') p(x_1,x') \, dx.
    \]
 We can consider a sequence $\psi_{\eps}$ of functions converging in $L^2$ to $\frac{1}{\omega_{j-1} r^{j-1}} \1_{B'(x',r)}$ where $B'(x',r)$ is the ball in $\R^{j-1}$ centered at $x'$ and with radius $r$, hence passing to the limit for $\eps \to 0$, we get
    \[
        \frac{1}{\omega_{j-1} r^{j-1}}\int_{\omega} u_k(x_1) \1_{B'(x',r)} p(x_1,x') \, dx = 0,
    \]
    and then, passing to the limit for $r \to 0$, we get for all $x'$
    \begin{equation}
        \label{eq:integral_S_x'_equal_0}
        \int_{S_{x'}(\omega)} u_k(x_1) p(x_1,x') \, dx_1 = 0.
    \end{equation}
    We proceed in the same way with the test function $x\to \varphi(x_1)\psi_{\eps}(x')$ where $\psi_{\eps} \to \frac{1}{\omega_{j-1} r^{j-1}}\1_{B'(x',r)}$ and, as before, we get
    \[
        \int_{S_{x'}(\omega)} u_k'(x_1) \varphi'(x_1)p(x_1,x') \, dx_1 = \mu_{k,d}^* \int_{S_{x'}(\omega)} u_k(x_1)\varphi(x_1)p(x_1,x') \, dx_1.
    \]
    If $\varphi \in C_C^{\infty}(S_{x'}\bigl( \omega) \bigr)$ we can integrate by parts (relying on the   structure  of $p$ from Theorem \ref{theo:Henrot_Michetti} and the inner smoothness of $u$) obtaining
    \begin{equation}
        \label{eq:intermediate_for_extension_Kroger}
        -u_k''(x_1)-u_k'(x_1) \frac{p'(x_1,x')}{p(x_1,x')} = \mu_{k,d}^* u_k(x_1) \qquad \text{ in } S_{x'}(\omega).
    \end{equation}
    Therefore the function
    \[
        \frac{p'(x_1,x')}{p(x_1,x')},
    \]
    does not depend on $x'$ since $u$ itself does not depend on $x'$ in \eqref{eq:intermediate_for_extension_Kroger}.  Assume that $\frac{p'(x_1,x')}{p(x_1,x')}= f(x_1)$ for every $x_1 \in (0, D_\omega)$.  As a consequence, for fixed $x'$ and $0< a<b<D_\omega$ such that $(a,x'), (b,x') \in S_{x'}(\omega)\setminus \partial \omega$ we have
    $$ \ln p(b, x')-\ln p(a,x')= \int _a^b f(s) ds.$$
    In particular,
    $$
     p(b, x')=p(a,x') e^{\int_a^b f(s)ds}.$$
     If $(x_1,x') \in \partial \omega$ is such that $p(x_1,x') =0$, then $p$ vanishes on $ S_{x'}(\omega)$, fact  which is not possible from the structure of $p$.

    The weak form of the equation and the fact that $u_k$ is smooth on $(0, D_\omega)$ give
    \[
        u_k'(x_1) p(x_1,x')= 0 \qquad \text{ on } \partial S_{x'}(\omega),
    \]
 hence  $u_k'(x_1) = 0$ at any point of  $S_{x'}(\omega)$ where $p(x_1,x') \not =0$. Since at any boundary point $(a, x') \in \partial \omega$ the function  $p$ can not vanish, we conclude that $u_k'(a)=0$, meaning that $u_k$ vanishes on the interval $(0, D_\omega)$.    
    
    \end{proof}

\section{Proof of Theorems \ref{main_theorem_k=2} and \ref{teo:optimality_exponent_2} }
\label{sec:proof_of_theorems}
Now we are in position to prove Theorem \ref{main_theorem_k=2}.  For clarity of exposition and to make the proof easier to follow, we begin by treating the case 
$k=1$. The argument for general 
$ k $ proceeds similarly, with only additional technical details.

\begin{proof}[Proof of Theorem \ref{main_theorem_k=2} for $k = 1$.]
 Let us argue by contradiction: assume that there exists $\Set{ \Omega_n }_{n \in \N}$ a sequence of open convex sets with $D_{\Omega_n} = 1$ such that
    \begin{equation}
        \label{eq:contradiction_hp_k=1}
        \mu_1(\Omega_n) \geq \mu_{1,d}^* - \frac{1}{n} a_2(\Omega_n)^2.
    \end{equation}
Up to a subsequence, it  converges in the sense of Hausdorff to $\Omega \subseteq \R^d$, a  convex set of diameter $1$. 
    
    We claim that $\Omega$ is a segment.  First we prove that $\Omega_n$ should collapse and, second,  that it collapses into a segment.

    \begin{itemize}
        \item Assume first that $\Omega$  has an interior point (hence no collapsing). Therefore, we  have
        \[
            \mu_1(\Omega_n) \to \mu_1(\Omega) = \mu_{1,d}^*,
        \]
        which   is in contradiction with  the equality case in Theorem \ref{theo:Henrot_Michetti}. Consequently,  $\{ \Omega_n \}$ collapses.
        \item Now let us show that $\{ \Omega_n \}$ converges to a segment. Assume for contradiction that for some $j \ge 2$, we can write  $\Omega =\omega\times \{0\}^{d-j}$ where $\omega\subset \R^j$ has an interior point.  Then  
        \[
          \mu_1(\Omega_n) \to   \mu_1(\omega,p) = \mu_{1,d}^*,
        \]
        for a suitable $\frac{1}{d-j}$-concave function $p$. This  is in contradiction to the equality case in Theorem \ref{theo:extension_Kroger}.    
    \end{itemize}
    
      Therefore $j=1$, $\omega$ is a segment and, since $D_{\Omega_n} = 1$ we can assume that $\omega = (0,1)\times\{0\}^{d-1}$  and all $\Omega_n$ have $\omega$ as a diameter.  
Setting $\tilde{\Omega}_n \coloneqq T_n(\Omega_n) $ there exists $\tilde{\Omega}$ such that $T_n(\Omega_n) \to \tilde{\Omega}$, up to a subsequence. We   define
    \[
        p(x_1) = \Ho^{d-1}(S_{x_1} \cap \tilde{\Omega}).
    \]
    Since $\mu_1(\omega,p) = \mu_{1,d}^*$, we have
    \[
        p(x)^{\frac{1}{d-1}} = \bigl( p_{1,d}^* \bigr)^{\frac{1}{d-1}} =
        \begin{cases}
            2x_1 & \text{ if } \displaystyle{ x \in \biggl[ 0,\frac{1}{2} \biggr]} \\[2ex]
            2(1-x_1) & \text{ if } \displaystyle{ x \in \biggl[ 0,\frac{1}{2} \biggr]}.
        \end{cases}
    \]
    This implies that
    \[
        \tilde{\Omega} \cap S_{x_1} =
        \begin{cases}
            x_1 \cdot S & \text{ on } \displaystyle{ x_1 \in \biggl[ 0,\frac{1}{2} \biggr]} \\[2ex]
            (1-x_1) \cdot S & \text{ on } \displaystyle{x_1 \in \biggl[ \frac{1}{2},1 \biggr]},
        \end{cases}
    \]
    where $S$ is a convex set in $\{ 0 \} \times \R^{d-1}$ containing the origin (Figure \ref{fig:cone_proof_k=1}).

    \begin{figure}[h!]
        \centering
        \begin{tikzpicture}
            \tikzmath{
            \y = 0.4;
            \h = 1.2;
            }
            \draw[black] (0,0) -- (6,0);
            \filldraw[black] (0,0) circle (1pt) node[anchor = north] {$0$};
            \filldraw[black] (6,0) circle (1pt) node[anchor = north] {$1$};

            \filldraw[black] (3,0) circle (1pt) node[anchor = south] {$\frac{1}{2}$};

            \draw[blue] (3,\y) ellipse(0.3 and \h);

            \node[blue] at(3,{\y- 0.5 + \h}) {$S$};

            \draw[black] (0,0) -- (3,{\y+\h}) -- (6,0);
            \draw[black] (0,0) -- (3,{\y-\h}) -- (6,0);

            \node[black] at (4.7,1.2) {$\tilde{\Omega}$};
            
        \end{tikzpicture}
        \caption{}
        \label{fig:cone_proof_k=1}
    \end{figure}

    To get a contradiction, we use the weak formulation of $\mu_1(\Omega_n)$ and test it with a suitable test function. Let us consider
\begin{equation}\label{d01}
        w_n(x_1,x') = u_n(x_1)\Bigl[ 1+\tau  \bigl( x_2-c(x_1) \bigr)^2  a_2(\Omega_n)^2\Bigr] - d_n,  \qquad x_1\in (0,1), x' \in S_{x_1} \cap \Omega_n,
\end{equation}
    where $u_n$ is the first eigenfunction of the following Sturm-Liouville problem
    \[
        \begin{cases}
            \displaystyle{ - \frac{d}{dx_1} \bigl( u_n'(x_1) p_n(x_1) \bigr) = \mu_{1,n}u_n(x_1) p_n(x_1)} & \text{ for } x \in (0,1) \\[1.6ex]
            u_n'(x_1) p_n(x_1) = 0 & \text{ on } \{ 0,1 \},
        \end{cases}
        \qquad
        p_n(x_1) = \Ho^{d-1} (S_{x_1} \cap \Omega_n),
    \]
    normalized  by
    \begin{equation}
        \label{eq:normalization_u_n}
        \int_{\Omega_n} u_n(x_1)^2 \, dx = \int_0^1 u_n(x_1)^2 p_n(x_1) \, dx_1 = 1.
    \end{equation}
The value of $c$ is defined as in the previous section,   by
    \[
        c(x_1) = 
        \dashint_{S_{x_1} \cap \tilde{\Omega}} x_2 \, d\Ho^{d-1}.
    \]
In \eqref{d01},  $d_n$ is a constant such that $ w_n$ has zero average and $\tau$ will be chosen later.

    For $(x_1,\ldots,x_d) \in \tilde{\Omega}_n = T_n(\Omega_n)$ we define
    \begin{equation}
        \label{eq:def_tilde_w_n}
        \begin{split}
            \tilde{w}_n(x_1,x')
            & =
            k_n w_n \bigl( x_1, a_2(\Omega_n) x_2, \ldots, a_d(\Omega_n) x_d \bigr)
            =
            \tilde{u}_n(x_1) \Bigl[ 1+ \tau \bigl( x_2-c(x_1) \bigr)^2 a_2(\Omega_n)^2 \Bigr] - k_nd_n \\
            \tilde{p}_n(x_1) &= \frac{p_n(x_1)}{k_n ^2},
        \end{split}
    \end{equation}
    where
    \[
        \tilde{u}_n(x_1) = k_n u_n(x_1), \qquad k_n = \sqrt{\prod_{j=2}^d a_j(\Omega_n)}.
    \]
    Hence,  we can write
    \[
        \frac{\displaystyle \int_{\Omega_n} \bigl \lvert \nabla w_n \bigr \rvert^2 \, dx }{\displaystyle \int_{\Omega_n} w_n^2 \, dx }
        =
        \frac{\displaystyle \int_{\tilde{\Omega}_n} \biggl( \parzder{\tilde{w}_n}{x_1} \biggr)^2 + \sum_{i=2}^d \biggl( \frac{1}{a_i(\Omega_n)^2} \parzder{\tilde{w}_n}{x_i} \biggr)^2 \, dx }{\displaystyle \int_{\tilde{\Omega}_n} \tilde{w}_n^2 \, dx }
        =
        \frac{\displaystyle \int_{\tilde{\Omega}_n} \biggl( \parzder{\tilde{w}_n}{x_1} \biggr)^2 + \frac{1}{a_2(\Omega_n)^2} \biggl( \parzder{\tilde{w}_n}{x_2} \biggr)^2 \, dx }{\displaystyle \int_{\tilde{\Omega}_n} \tilde{w}_n^2 \, dx },
    \]
    where for the last equality sign we used the fact that $w_n$ depends only on the first two variables.
    
    Taking into account \eqref{eq:def_tilde_w_n}, equation \eqref{eq:normalization_u_n} becomes
    \[
        \int_{\tilde{\Omega}_n} \tilde{w}_n(x_1)^2 \, dx = \int_0^1 \tilde{u}_n(x_1)^2 \tilde{p}_n(x_1) \, dx_1 = 1
    \]
    Let us compute $d_n$:
    \begin{equation}
        \label{eq:definition_c_n}
        \begin{split}
            \int_{\Omega_n} w_n \, dx = 0
            & \iff \int_{\Omega_n} \Bigl[ u_n(x_1) \Bigl( 1+\tau   \bigl( x_2-c(x_1) \bigr)^2  a_2(\Omega_n)^2 \Bigr) - d_n \Bigr] \, dx_1 = 0 \\
            & \iff d_n = \frac{\tau}{\abs{\Omega_n}} \int_{\Omega_n} u_n(x_1)  \bigl( x_2-c(x_1) \bigr)^2  a_2(\Omega_n)^2 \, dx \\
            & \iff d_n = \frac{\tau}{\lvert \tilde{\Omega}_n \rvert} \Biggl( \int_{\tilde{\Omega}_n} \tilde{u}_n(x_1) \bigl( x_2- c(x_1) \bigr)^2 \, dx \Biggr) a_2(\Omega_n)^2,
        \end{split}
    \end{equation}
    since $u_n$ has zero average on $\Omega_n$.

Since $\tilde{\Omega}_n \xrightarrow{\mathcal{H}} \tilde{\Omega}$
    \[
        \bigl( \tilde{p}_n \bigr)^{\frac{1}{d-1}} \to  (p_{1,d}^*)^{\frac{1}{d-1}},
    \]
    up to a multiplicative constant. Moreover,
      $1_{\tilde \Omega_n} \tilde u_n \to 1_{\tilde \Omega} \tilde u$ and $1_{\tilde \Omega_n} \nabla \tilde u_n \to 1_{\tilde \Omega} \nabla \tilde u$ strongly in $L^2$, where $u$ is the one dimensional eigenfunction associated to $p_{1,d}^*$ on $(0,1)$.

    So the weak formulation of $\mu_1(\Omega_n)$ gives
    \begin{equation}
        \label{eq:almost_contradiction_hp_k=1}
        \mu_{1,d}^* - \frac{1}{n} a_2(\Omega_n)^2 \leq \mu_1(\Omega_n) \leq \frac{\displaystyle \int_{\Omega_n} \abs{\nabla w_n}^2 \, dx }{\displaystyle \int_{\Omega_n} w_n^2 \, dx}
        =
        \frac{\displaystyle \int_{\tilde{\Omega}_n} \biggl( \parzder{\tilde{w}_n}{x_1} \biggr)^2 + \frac{1}{a_2(\Omega_n)^2} \biggl( \parzder{\tilde{w}_n}{x_2} \biggr)^2 \, dx }{\displaystyle \int_{\tilde{\Omega}_n} \tilde{w}_n^2 \, dx }.
    \end{equation}
   Since the left-hand side contains a term of order two in $a_2(\Omega_n)$, we expand the right-hand side in powers of $a_2(\Omega_n)$ up to second order. We analyze the numerator and denominator of \eqref{eq:almost_contradiction_hp_k=1} separately.
    
    \smallskip
    \noindent{\bf 
 Estimate of the numerator.}       To simplify notations, if $v$ is a function of one variable  $x_1$ and $\tilde v$ is the extension of $v$ in $\R^d$ constant in $(x_2, \dots, x_d)$ we make no difference between $\frac{\partial \tilde v}{\partial x_1}(x)$, $\tilde v'(x_1)$ and $v'(x_1)$.
        First of all we notice that 
        \begin{align*}
            \parzder{\tilde{w}_n}{x_1}
            & =
            \tilde{u}_n'(x_1) \Bigl[ 1+\tau \bigl( x_2-c(x_1) \bigr)^2 \, a_2(\Omega_n)^2 \Bigr] - 2 \tau \tilde{u}_n(x_1) \bigl( x_2- c(x_1) \bigr) c'(x_1) a_2(\Omega_n)^2 \\
            \parzder{\tilde{w}_n}{x_2}
            & =
            2 \tau \tilde{u}_n(x_1) \bigl( x_2-c(x_1) \bigr) a_2(\Omega_n)^2.
        \end{align*}
        so,
        \begin{align*}
            \biggl( \parzder{\tilde{w}_n}{x_1} \biggr)^2
            & =
            \bigl( \tilde{u}_n'(x_1) \bigr)^2 \Bigl[ 1+\tau \bigl( x_2-c(x_1) \bigr)^2 a_2(\Omega_n)^2 \Bigr]^2
            \\
            & +
            4 \tau^2 \bigl( \tilde{u}_n(x_1) \bigr)^2 \bigl( x_2-c(x_1) \bigr)^2 \, c'(x_1)^2 \, a_2(\Omega_n)^4 \\
            & -
            4 \tau \tilde{u}_n'(x_1) \tilde{u}_n(x_1) \Bigl[ 1+\tau \bigl( x_2-c(x_1) \bigr)^" a_2(\Omega_n)^2 \Bigr] \, \bigl( x_2-c(x_1) \bigr) c'(x_1) a_2(\Omega_n)^2 \\
            \biggl( \parzder{\tilde{w}_n}{x_2} \biggr)^2
            & =
            4 \tau^2 \bigl( \tilde{u}_n(x_1) \bigr)^2 \, \bigl( x_2-c(x_1) \bigr)^2 \, a_2(\Omega_n)^4,
        \end{align*}
        therefore we have
        \begin{align*}
            \int_{\tilde{\Omega}_n} \biggl( \parzder{\tilde{w}_n}{x_1} \biggr)^2 + \frac{1}{a_2(\Omega_n)^2} \biggl( \parzder{\tilde{w}_n}{x_2} \biggr)^2 \, dx
            & =
            \int_{\Omega_n} \bigl( \tilde{u}_n'(x_1) \bigr)^2 \, dx \\
            & +
            2 \tau \Biggl( \int_{\tilde{\Omega}_n} \bigl(\tilde{u}_n'(x_1) \bigr)^2 \bigl( x_2-c(x_1) \bigr)^2 \, dx \Biggr) a_2(\Omega_n)^2 \\ 
            & -
            4 \tau \Biggl( \int_{\tilde{\Omega}_n} \tilde{u}_n'(x_1) \tilde{u}_n(x_1) \bigl( x_2-c(x_1) c'(x_1) \, dx \bigr) \Biggr) a_2(\Omega_n)^2 \\
            & +
            4 \tau^2 \Biggl( \int_{\tilde{\Omega}_n} \tilde{u}_n(x_1)^2 \bigl( x_2-c(x_1) \bigr)^2 \, dx \Biggr) a_2(\Omega_n)^2 + o \bigl( a_2(\Omega_n)^2 \bigr) \\
            & =
            \mu_1(I,p_n) +
            2 \tau \Biggl( \int_{\tilde{\Omega}_n} \tilde{u}_n'(x_1)^2 \bigl( x_2-c(x_1) \bigr)^2 \, dx \Biggr) a_2(\Omega_n)^2 \\ 
            & -
            4 \tau \Biggl( \int_{\tilde{\Omega}_n} \tilde{u}_n'(x_1) \tilde{u}_n(x_1) \bigl( x_2-c(x_1) c'(x_1) \, dx \bigr) \Biggr) a_2(\Omega_n)^2 \\
            & +
            4 \tau^2 \Biggl( \int_{\tilde{\Omega}_n} \tilde{u}_n(x_1)^2 \bigl( x_2-c(x_1) \bigr)^2 \, dx \Biggr) a_2(\Omega_n)^2 + o \bigl( a_2(\Omega_n)^2 \bigr).
        \end{align*}
        Recalling that $\mu_1(I,p_n) \leq \mu_{1,d}^*$, we get
        \begin{equation}
            \label{eq:estimate_numerator_k=1}
            \begin{split}
                \int_{\tilde{\Omega}_n} \biggl( \parzder{\tilde{w}_n}{x_1} \biggr)^2 + \frac{1}{a_2(\Omega_n)^2} \biggl( \parzder{\tilde{w}_n}{x_2} \biggr)^2 \, dx
                & \leq
                \mu_{1,d}^* + 2 \tau \Biggl(\int_{\tilde{\Omega}_n} \bigl( \tilde{u}_n'(x_1) \bigr)^2 \, \bigl( x_2 - c(x_1) \bigr)^2 \, dx \Biggr) a_2(\Omega_n)^2 \\
                & -
                4 \tau \Biggl( \int_{\tilde{\Omega}_n} \tilde{u}_n'(x_1)\tilde{u}_n(x_1) \bigl( x_2-c(x_1) \bigr) \bigl( c(x_1) \bigr) \, dx \Biggr) a_2(\Omega_n)^2 \\
                & +
                4 \tau^2 \Biggl( \int_{\tilde{\Omega}_n} \tilde{u}_n(x_1)^2 \, \bigl( x_2 - c(x_1) \bigr)^2 \, dx \Biggr) a_2(\Omega_n)^2 + o \bigl( a_2(\Omega_n)^2 \bigr).
            \end{split}
        \end{equation}
   
\smallskip
\noindent{\bf Estimate of the denominator.} Using the expression of $d_n$ from \eqref{eq:definition_c_n}, we have    

        \begin{equation}
            \label{eq:numerator_k=1}
            \begin{split}
                \int_{\tilde{\Omega}_n} \tilde{w}_n(x)^2 \, dx
                & =
                \int_{\Omega_n} \tilde{u}_n(x_1)^2 \Bigl[ 1+\tau \bigl( x_2 - c(x_1) \bigr)^2 a_2(\Omega_n)^2 \Bigr]^2 \, dx + k_n^2 d_n^2 \abs{\Omega_n} \\
                & -
                2k_n d_n \tau \int_{\Omega_n} \tilde{u}_n(x_1) \bigl( x_2-c(x_1) \bigr)^2  a_2(\Omega_n)^2   \, dx \\
                & =
                \int_{\Omega_n} \tilde{u}_n(x_1)^2 \Bigl[ 1+\tau \bigl( x_2 - c(x_1) \bigr)^2 a_2(\Omega_n)^2 \Bigr]^2 \, dx - k_n^2 d_n^2 \abs{\Omega_n} \\
                & =
                \int_{\tilde{\Omega}_n} \tilde{u}_n(x_1)^2 \, dx
                +
                2 \tau \Biggl( \int_{\tilde{\Omega}_n} \tilde{u}_n(x_1)^2  \bigl( x_2- c(x_1)\bigr)^2   \, dx \Biggr) a_2(\Omega_n)^2 + o \bigl( a_2(\Omega_n)^2 \bigr).
            \end{split}
        \end{equation}
 
        \smallskip

    So \eqref{eq:almost_contradiction_hp_k=1} becomes
    \begin{align*}
        \mu_{1,d}^* & - \frac{1}{n} a_2(\Omega_n)^2 \\
        & \leq
        \frac{\displaystyle \mu_{1,d}^* + 2 \tau \Biggl( \int_{\tilde{\Omega}_n} \bigl( \tilde{u}_n'(x_1) \bigr)^2 \, \bigl( x_2 - c(x_1) \bigr)^2 \, dx \Biggr) a_2(\Omega_n)^2}{\displaystyle 1 + 2 \tau \Biggl( \int_{\tilde{\Omega}_n} \tilde{u}_n(x_1)^2 \bigl( x_2 - c(x_1) \bigr)^2 \, dx \Biggr) a_2(\Omega_n)^2 + o \bigl( a_2(\Omega_n)^2 \bigr) } \\
        & -
        \frac{\displaystyle 4 \tau \Biggl( \int_{\tilde{\Omega}_n} \tilde{u}_n'(x_1)\tilde{u}_n(x_1) \bigl( x_2-c(x_1) \bigr) c'(x_1) \, dx \Biggr) a_2(\Omega_n)^2 }{ \displaystyle 1 + 2 \tau \Biggl( \int_{\tilde{\Omega}_n} \tilde{u}_n(x_1)^2 \bigl( x_2 - c(x_1) \bigr)^2 \, dx \Biggr) a_2(\Omega_n)^2 + o \bigl( a_2(\Omega_n)^2 \bigr) }
        \\
        & +
        \frac{\displaystyle 4 \tau^2 \Biggl( \int_{\tilde{\Omega}_n} \tilde{u}_n(x_1)^2 \, \bigl( x_2 - c(x_1) \bigr)^2 \, dx \Biggr) a_2(\Omega_n)^2 + o \bigl( a_2(\Omega_n)^2 \bigr) }{\displaystyle 1 + 2 \tau \Biggl( \int_{\tilde{\Omega}_n} \tilde{u}_n(x_1)^2 \bigl( x_2 - c(x_1) \bigr)^2 \, dx \Biggr) a_2(\Omega_n)^2 + o \bigl( a_2(\Omega_n)^2 \bigr)}.
    \end{align*}
    Therefore we get
    \begin{equation}
        \label{eq:label_da_richiamare}
        \begin{split}
            -\frac{1}{n}a_2(\Omega_n)^2 + o\bigl( a_2(\Omega_n)^2 \bigr)
            & \leq
            2 \tau \Biggl[ \int_{\tilde{\Omega}_n} \Bigl( \bigl( \tilde{u}_n'(x_1) \bigr)^2 - \mu_{1,d}^* \tilde{u}_n(x_1)^2 \Bigr) \, \bigl( x_2 - c(x_1) \bigr)^2 \, dx  \\
            & -
            2 \int_{\tilde{\Omega}_n} \tilde{u}_n'(x_1)\tilde{u}_n(x_1) \bigl( x_2-c(x_1) \bigr) c'(x_1) \, dx \Biggr] a_2(\Omega_n)^2 \\
            & +
            4 \tau^2 \Biggl( \int_{\tilde{\Omega}_n} \tilde{u}_n(x_1)^2 \, \bigl( x_2 - c(x_1) \bigr)^2 \, dx \Biggr) a_2(\Omega_n)^2 \\
            & \eqqcolon
            2 \gamma_n a_2(\Omega_n)^2 \tau + 4 \beta_n a_2(\Omega_n)^2 \tau^2,
        \end{split}
    \end{equation}
    where
    \begin{align*}
        \gamma_n & = \int_{\tilde{\Omega}_n} \Bigl( \bigl( \tilde{u}_n'(x_1) \bigr)^2 - \mu_{1,d}^* \tilde{u}_n(x_1)^2 \Bigr) \, \bigl( x_2 - c(x_1) \bigr)^2 \, dx
        -
        2 \int_{\tilde{\Omega}_n} \tilde{u}_n'(x_1)\tilde{u}_n(x_1) \bigl( x_2-c(x_1) \bigr) c'(x_1) \, dx, \\
        \beta_n &= \int_{\tilde{\Omega}_n} \tilde{u}_n(x_1)^2 \, \bigl(x_2 - c(x_1) \bigr)^2 \, dx.
    \end{align*}
  We observe that the right-hand side of \eqref{eq:label_da_richiamare} contains a term of order two in $a_2(\Omega_n)$, whose coefficient is a second-order polynomial in $\tau$ with no constant term. A contradiction is then obtained by choosing $\tau < 0$ sufficiently small, provided that    \[
        \lim_{n \to +\infty} \gamma_n \neq 0.
    \]
  This means
    \begin{align*}
        \lim_{n \to + \infty} \Biggl \{ \int_{\tilde{\Omega}_n} \Bigl[ \bigl( \tilde{u}_n'(x_1) \bigr)^2  - \mu_{1,d}^* \tilde{u}_n(x_1)^2 \Bigr] \, \bigl( x_2 - c(x_1) \bigr)^2 \, dx - 2 \int_{\tilde{\Omega}_n} \tilde{u}_n'(x_1)\tilde{u}_n(x_1) \bigl( x_2-c(x_1) \bigr) c'(x_1) \, dx \Biggr \}
        \neq 0.
    \end{align*}
   
    We can rewrite each of the two terms above
        $$
        \int_{\tilde{\Omega}_n} \Bigl[ \bigl( \tilde{u}_n'(x_1) \bigr)^2 - \mu_{1,d}^* \tilde{u}_n(x_1)^2 \Bigr] \bigl( x_2  - c(x_1) \bigr)^2  \, dx \hskip 3cm  $$ 
        $$     \hskip 3cm    =
        \int_0^1 \Bigl[ \bigl( \tilde{u}_n'(x_1) \bigr)^2 - \mu_{1,d}^* \tilde{u}_n(x_1)^2 \Bigr] \biggl( \int_{S_{x_1} \cap \, \tilde{\Omega}_n} \bigl( x_2 - c(x_1) \bigr)^2 \, d\Ho^{d-1} \biggr) \, dx_1,
$$
    and
    \begin{align*}
        \int_{\tilde{\Omega}_n} \tilde{u}_n'(x_1)\tilde{u}_n(x_1) \bigl( x_2 & -c(x_1) \bigr) c'(x_1) \, dx 
        = \int_0^1 \Bigl[ \tilde{u}_n'(x_1) \tilde{u}_n(x_1) c'(x_1) \Bigr] \biggl( \int_{S_{x_1} \cap \tilde{\Omega}_n} x_2 - c(x_1) \, d \Ho^{d-1}  \biggr) \, dx_1.
    \end{align*}
    We recall that $\tilde{\Omega}_n$ converges to $\tilde{\Omega}$ in the sense of Hausdorff and $\{ 1_{\tilde \Omega_n}\tilde{u}_n \}, \{ 1_{\tilde \Omega_n}\nabla \tilde{u}_n \}$ satisfy (Theorem \ref{theo:ABF_24} and Remark \ref{d31})
        \begin{align*}
        1_{\tilde \Omega_n} \tilde{u}_n & \to 1_{\tilde \Omega}\tilde{u} & &\text{ in } L^2  \\
       1_{\tilde \Omega_n} \nabla \tilde{u}_n & \to 1_{\tilde \Omega} \nabla \tilde{u} & &\text{strongly  in } L^2,
    \end{align*}
    hence we have
    \begin{align*}
        \lim_{n \to +\infty } \int_0^1 \Bigl[ \bigl( \tilde{u}_n'(x_1) \bigr)^2 & - \mu_{1,d}^* \tilde{u}_n(x_1)^2 \Bigr] \biggl( \int_{S_{x_1} \cap \, \tilde{\Omega}_n} \bigl( x_2 - c(x_1) \bigr)^2 \, d\Ho^{d-1} \biggr) \, dx_1 \\
        &    = \lim_{n \to +\infty } \int_{\tilde \Omega_n} \Bigl[ \bigl( \frac{\partial \tilde{u}_n}{\partial x_1} \bigr)^2  - \mu_{1,d}^* \tilde{u}_n(x_1)^2 \Bigr] \biggl( \dashint_{S_{x_1} \cap \, \tilde{\Omega}_n} \bigl( x_2 - c(x_1) \bigr)^2 \, d\Ho^{d-1} \biggr) \, dx_1   \\
         &    =   \int_{\tilde \Omega} \Bigl[ \bigl( \frac{\partial \tilde{u}}{\partial x_1} \bigr)^2  - \mu_{1,d}^* \tilde{u}^2 \Bigr] \biggl( \dashint_{S_{x_1} \cap \, \tilde{\Omega}} \bigl( x_2 - c(x_1) \bigr)^2 \, d\Ho^{d-1} \biggr) \, dx   \\
        & =
        \int_0^1 \Bigl[ \bigl( \tilde{u}'(x_1) \bigr)^2 - \mu_{1,d}^* \tilde{u}^2 \Bigr] \biggl( \int_{S_{x_1} \cap \, \tilde{\Omega}} \bigl( x_2 - c(x_1) \bigr)^2 \, d\Ho^{d-1} \biggr) \, dx.
    \end{align*}
 From the definition of $c$, we get
    \begin{align*}
        \lim_{n \to +\infty } & \int_0^1 \Bigl[ \tilde{u}_n'(x_1) \tilde{u}_n(x_1) c'(x_1) \Bigr] \biggl( \int_{S_{x_1} \cap \tilde{\Omega}_n} x_2 - c(x_1) \, d \Ho^{d-1}  \biggr) \, dx_1 \\
        & =
        \int_{0}^1 \tilde{u}'(x_1) \tilde{u}(x_1)c'(x_1) \Biggl[ \int_{S_{x_1} \cap \tilde{\Omega}} x_2-c(x_1) \, d\Ho^{d-1} \Biggr]\ dx_1=0.
    \end{align*}
   For the reader's convenience, and to simplify the notation, we identify $\tilde{u}$ with $u$. Recall that, for every $d$, we have
    \[
        p_{\tilde{\Omega}}^{\frac{1}{d-1}} (x_1) =
        \begin{cases}
            2x_1 & \text{ in } \displaystyle{\biggl[ 0,\frac{1}{2} \biggr]} \\[1.8ex]
            2(1-x_1) & \text{ in } \displaystyle{\biggl[ \frac{1}{2},1 \biggr]}
        \end{cases}
        \quad
        \text{ and }
        \quad
        u (x_1) = -u ( 1-x_1 ), u'(x_1) = u'(1-x_1) \qquad \forall x_1 \in \biggl[ 0,\frac{1}{2} \biggr].
    \]
Using \eqref{eq:momento_di_ordine_2}, there exists $K>0$ such that
    \begin{equation}
        \label{eq:giusta_da_richiamare}
        \int_0^1 \Bigl[ \bigl( u'(x_1) \bigr)^2 - \mu_{1,d}^* u(x_1)^2 \Bigr] \biggl( \int_{S_{x_1} \cap \, \tilde{\Omega}} \bigl( x_2 - c(x_1) \bigr)^2 \, d\Ho^{d-1} \biggr) \, dx_1
        \hskip 2cm
        \end{equation}
        \[\hskip 2cm =
        2 K \int_0^{\frac{1}{2}} \Bigl[ \bigl( u'(x_1) \bigr)^2 - \mu_{1,d}^* u(x_1)^2 \Bigr] p(x_1)^{1+\frac{2}{d-1}} \, dx_1.
  \]
    Since $u$ is an eigenfunction of the Sturm-Liouville problem related to $p(x_1)$ and taking into account the regularity of $u$ and $p$ (see \cite{Henrot_Michetti}), it holds
    \begin{equation}
         \label{eq:integration_by_parts_v}
        -u''(x_1)p(x_1) - \mu_{1,d}^* u(x_1) p(x_1) = u'(x_1)p'(x_1)
        \qquad x_1 \in \biggl( 0,\frac{1}{2} \biggr).
    \end{equation}
Multiplying  by $u(x_1) p(x_1)^\frac{2}{d-1}$, integrating from $0$ to $\frac{1}{2}$      and using $p(0)=0$ and $u(\frac 12)=0$, we get (see also   \cite[Lemma 3.10]{Henrot_Michetti})

    \[
        \int_{0}^{\frac{1}{2}} \Bigl[ u'(x_1)^2 - \mu_{1,d}^* u(x_1)^2 \Bigr] \,p(x_1)^{1+\frac{2}{d-1}} \, dx_1
        =
        - \frac{2}{d-1} \int_0^{\frac{1}{2}} u'(x_1)u(x_1) p(x_1)^{\frac{2}{d-1}} \, p'(x_1) \, dx_1.
    \]
        Plugging this into \eqref{eq:giusta_da_richiamare}, we get
    \begin{equation}
        \label{eq:last_integration_by_parts_k=1}
        \int_0^1 \Bigl[ \bigl( u'(x_1) \bigr)^2 - \mu_{1,d}^* u(x_1)^2 \Bigr] \biggl( \int_{S_{x_1} \cap \, \tilde{\Omega}} \bigl( x_2 - c(x_1) \bigr)^2 \, d\Ho^{d-1} \biggr) \, dx_1 \hskip 2cm
         \end{equation}
    \[ \hskip 2cm   =
        -\frac{4K}{d-1} \int_0^{\frac{1}{2}} u'(x_1)u(x_1) \, p(x_1)^{\frac{2}{d-1}} p'(x_1) \, dx_1.\]
   Let us show that the last term is different from zero. Integrating by parts   and using again $p(0)=0$ and $u(\frac 12)=0$, we get  
    \begin{equation*}
        \int_0^{\frac{1}{2}} u'(x_1)u(x_1) \, p(x_1)^{\frac{2}{d-1}} p'(x_1) \, dx_1
        =
        -\int_0^{\frac{1}{2}} \frac{u(x_1)^2}{2} \frac{d}{dx_1} \Bigl( p(x_1)^{\frac{2}{d-1}} p'(x_1) \Bigr) \, dx_1.
    \end{equation*}
Since $p^{\frac{1}{d-1}}$ is affine on $\bigl[ 0, \frac{1}{2} \bigr]$,  there exists $\tilde{a}>0$ such that
    \begin{align*}
        \frac{d}{dx_1} \bigl( p^{\frac{1}{d-1}}(x_1) \bigr) = \tilde{a}
        & \iff
        \frac{1}{d-1} \frac{p'(x_1)}{p(x_1)^{1-\frac{1}{d-1}}} = \tilde{a} \\[0.8ex]
        & \iff
        p'(x_1) = \tilde{a} (d-1)p(x_1)^{1-\frac{1}{d-1}},
    \end{align*}
    therefore
    \begin{align*}
        \frac{d}{dx_1} \Bigl( p(x_1)^{\frac{2}{d-1}} p'(x_1) \Bigr)
        & =
        \tilde{a} (d-1) \frac{d}{d x_1} \Bigl( p(x_1)^{1+\frac{1}{d-1}} \Bigr) \\[1ex]
        & =
        \tilde{a} (d-1)\biggl( 1+\frac{1}{d-1} \biggr) p(x_1)^{\frac{1}{d-1}} p'(x_1) \\[1.3ex]
        & =
        \tilde{a}^2 d(d-1) p(x_1).
    \end{align*}
Equality \eqref{eq:last_integration_by_parts_k=1} becomes
    \[
        \int_0^1 \Bigl[ \bigl( u'(x_1) \bigr)^2 - \mu_{1,d}^* u(x_1)^2 \Bigr] \biggl( \int_{S_{x_1} \cap \, \tilde{\Omega}} \bigl( x_2 - c(x_1) \bigr)^2 \, d\Ho^{d-1} \biggr) \, dx_1
        =
        2 \tilde{a}^2 K d
        \int_0^{\frac{1}{2}} u(x_1)^2 p(x_1).
    \]
    We conclude the proof since the previous integral is strictly positive.
\end{proof}

Now we prove Theorem \ref{main_theorem_k=2} in full generality.
\begin{proof}[Proof of Theorem \ref{main_theorem_k=2} for any $k$]
    The first part of the proof is entirely analogous. We argue by contradiction and assume that there exists a sequence $\{ \Omega_n \}$ of open convex sets with diameter equal to $1$ that violates \eqref{eq_quantitative_mu_2}, namely    \begin{equation}
        \label{eq:hp_absurd_k=2}
        \mu_k(\Omega_n) > \mu_{k,d}^* - \frac{1}{n} a_2(\Omega_n)^2.
    \end{equation}
   Reasoning as in the previous case, we can show that, up to a subsequence, $\Omega_n$ converges to a segment. Up to a rotation and a translation, we may assume that $\Omega_n \to \bigl( (0,1), p \bigr)$, where $p$ is a $\frac{1}{d-1}$-concave function.
   
Moreover, using the same notation as above, we can define    \begin{align*}
        p_n(x_1) & = \Ho^{d-1} \bigl( S_{x_1} \cap \tilde{\Omega}_n \bigr) &  & \forall x_1 \in (0,1), \, \forall n \in \N \\
        p(x_1) & = \Ho^{d-1} \bigl( S_{x_1} \cap \tilde{\Omega} \bigr) &  & \forall x_1 \in (0,1),
    \end{align*}
    where $\tilde{\Omega}_n = T_n(\Omega_n)$. Up to multiple extractions, we can assume that $\tilde{\Omega}_n \overset{\Ho}{\longrightarrow} \tilde{\Omega}$, $\forall i=1, \dots, k$, $\mu_i(\Omega_n) \to \mu_{i}(I,p)$ and $\mu_i(I,p_n) \to \mu_i(I,p)$ where $I = (0,1)$. Let us highlight that
    \begin{equation}
        \label{eq:mu_k-1_<_mu_k}
        \mu_{k-1}(I,p) < \mu_{k}(I,p),
    \end{equation}
For this, we consider the first $k+1$ eigenfunctions of $(I,p_n)$ ($u_0$ is the constant function)
    \[
        u_0^n(x_1),u_1^n(x_1),\ldots,u_k^n(x_1),
    \]
    normalized by
    \begin{equation}
        \label{eq:normalization_first_k_eigenfunction}
        \int_0^1 \bigl( u_i^n(x_1)\bigr)^2 p_n(x_1) \, dx_1 = \int_{\Omega_n} \bigl( u_i^n (x_1) \bigr)^2 \, dx = 1 \qquad \forall i \in \{ 0,\ldots,k \},
    \end{equation} 
    and let $V$ be
    \[
        V = \spann \, \Bigl \{ u_0^n(x_1), u_1^n(x_1),\ldots, u_{k-1}^n(x_1), u_k^n(x_1) \Bigl[ 1+\tau  \bigl( x_2 - c(x_1) \bigr)^2a_2(\Omega_n) ^2 \Bigr] \Bigr \} \subseteq H^1(\Omega_n),
    \]
    where
    \[
        c(x_1) = 
        \dashint_{S_{x_1} \cap \tilde{\Omega}} x_2 \, d\Ho^{d-1}
        .
    \]
    
     Relyng on the variational formulation of $\mu_k$, \eqref{eq_mu_k} and using \eqref{eq:hp_absurd_k=2}  we have
    \begin{equation}
        \label{eq:contradiction_with_subspace}
        \mu_{k,d}^* - \frac{1}{n} a_2(\Omega_n)^2
        \leq
        \mu_{k}(\Omega_n)
        \leq
        \max_{v \in V} \frac{ \displaystyle \int_{\Omega_n} \abs{\nabla v}^2 \, dx }{\displaystyle \int_{\Omega_n} v^2 \, dx }
        =
        \max_{\alpha_i \in \R} \frac{ \displaystyle \int_{\Omega_n} \biggl \lvert \sum_{i=0}^{k-1} \alpha_i \nabla v_i^n +  \alpha_k \nabla v_k^n \biggr \rvert ^2 \, dx }{\displaystyle \int_{\Omega_n} \biggl( \sum_{i=0}^{k-1} \alpha_i v_i^n + \alpha_k v_k^n \biggr)^2 \, dx }
        \eqqcolon
        \mu
        ,
    \end{equation}
    where
    \begin{align*}
       v_i^n(x_1,x') & = u_i^n(x_1) & \forall i \in \{ 0, \ldots, d-1\} \\
       v_k^n(x_1,x') & = u_k^n(x_1) \Bigl[ 1 +\tau  \bigl( x_2 - c(x_1) \bigr)^2a_2(\Omega_n) ^2\Bigr].
    \end{align*}
    Taking into account the orthogonality of the eigenfunctions, i.e.
    \[
        \int_{\Omega_n} u_i^n u_j ^n \, dx = \int_{\Omega_n} \nabla u_i^n \, \nabla u_j^n \, dx = 0 \qquad \forall i \neq j,
    \]
    we can rewrite \eqref{eq:contradiction_with_subspace} as
    \begin{equation}
        \label{eq:def_xi_max}
        \mu^n = \max_{\alpha_i \in \R} \frac{
            \displaystyle \sum_{i=0}^{k-1} \alpha_i^2 \int_{\Omega_n} \abs{\nabla u_i^n}^2 \, dx + \alpha_k^2 \int_{\Omega_n} \abs{\nabla \tilde{u}_k^n}^2 \, dx + 2 \sum_{i=0}^{k-1} \alpha_i \alpha_k \int_{\Omega_n} \nabla u_i^n \, \nabla \tilde{u}_k^n \, dx
        }{
            \displaystyle \sum_{i=0}^{k-1} \alpha_i^2 \int_{\Omega_n} (u_i^n)^2 \, dx + \alpha_k^2 \int_{\Omega_n} (\tilde{u}_k^n)^2 \, dx + 2 \sum_{i=0}^{k-1} \alpha_i \alpha_k \int_{\Omega_n} u_i^n \, \tilde{u}_k^n \, dx
        }
        .
    \end{equation}
    Let us rewrite the quantity above, setting
    \begin{align*}
        \tilde{v}_i^n(x_1,x')
        & =
        k_n v_i^n \bigl( x_1, a_2(\Omega_n) x_2,\ldots, a_d(\Omega) x_d \bigr) = \tilde{u}_i^n(x_1)
        && \forall
        i \in \{ 0,\ldots, k-1 \} \\
        \tilde{v}_k^n(x_1,x')
        & =
        k_n v_k^n \bigl( x_1, a_2(\Omega_n) x_2, \ldots, a_d(\Omega_n) x_d \bigr) = \tilde{u}_k^n(x_1) \Bigl[ 1 + \tau \bigl( x_2 - c(x_1) \bigr) ^2a_2(\Omega_n)^2 \Bigr] \\
        \tilde{p}_n(x_1) & = \frac{p_n(x_1)}{ k_n^2},
    \end{align*}
    where
    \[
        \tilde{u}_i^n(x_1) = k_n u_i^n(x_1) \qquad \forall i \in \{ 0 ,\ldots, k \}
        \qquad \text{ and }\qquad
        k_n = \sqrt{\prod_{j=2}^d a_j(\Omega_n)}.
    \]
    With a change of variable, the value of $\mu^n$ in \eqref{eq:def_xi_max} is written
    \begin{equation}
        \label{eq:def_mu_max_con_riscalati}
        \small
        \begin{split}
        \max_{\alpha_i \in \R} \frac{
            \displaystyle \sum_{i=0}^{k-1} \alpha_i^2 \int_{\tilde{\Omega}_n} \bigl( (\tilde{v}_i^n) '(x_1) \bigr)^2 \, dx + \alpha_k^2 \int_{\tilde{\Omega}_n}  \Biggl[ \biggl( \parzder{\tilde{v}_k^n}{x_1} \biggr)^2 + \frac{1}{a_2(\Omega_n)^2} \biggl( \parzder{\tilde{v}_k^n}{x_2} \biggr)^2 \Biggr] \, dx + 2 \sum_{i=0}^{k-1} \alpha_i \alpha_k \int_{\tilde{\Omega}_n} (\tilde{v}_i^n)'(x_1) \, \biggl( \parzder{\tilde{v}_k^n}{x_1} \biggr) \, dx
        }{
            \displaystyle \sum_{i=0}^{k-1} \alpha_i^2 \int_{\tilde{\Omega}_n} (\tilde{v}_i^n)^2 \, dx + \alpha_k^2 \int_{\tilde{\Omega}_n} (\tilde{v}_k^n)^2 \, dx + 2 \sum_{i=0}^{k-1} \alpha_i \alpha_k \int_{\tilde{\Omega}_n} \tilde{v}_i^n \, \tilde{v}_k^n \, dx
        } 
        .
        \end{split}
    \end{equation}
    The maximum is attained at some $(\alpha_0^n, \ldots, \alpha_k^n)$ such that $\sum_{i=0}^k (\alpha_i^n)^2 = 1, \, \alpha_k^n >0$.
We want to prove that $\alpha_i^n = O\bigl( a_2(\Omega_n)^2 \bigr)$ $\forall i \in \{ 0,\ldots, k-1 \}$ while $\alpha_k = 1 - O \bigl( a_2(\Omega_n)^2\bigr)$.
    
    The optimality condition for \eqref{eq:def_mu_max_con_riscalati} reads: $ \forall i \in \{ 0, \ldots, k-1\}$  
    \begin{equation}
        \label{eq:optimality_condition_for_xi}
        2 \alpha_i^n \int_{\tilde{\Omega}_n} \bigl( (\tilde{v}_i^n)'(x_1) \bigr)^2 \, dx + 2\alpha_k^n \int_{\tilde{\Omega}_n} (\tilde{v}_i^n)'(x_1) \, \biggl( \parzder{\tilde{v}_k^n}{x_1} \biggr) \, dx
        =
        \mu \Biggl[ 2 \alpha_i^n \int_{\tilde{\Omega}_n} (\tilde{v}_i^n)^2 \, dx + 2 \alpha_k^n \int_{\tilde{\Omega}_n} \tilde{v}_i^n \, \tilde{v}_k^n \, dx \Biggr] ,
    \end{equation}
    and using that
    \begin{align*}
        \int_{\tilde{\Omega}_n} \bigl( (\tilde{v}_i^n)'(x_1) \bigr)^2 \, dx
        & =
        \int_{\tilde{\Omega}_n} \bigl( (\tilde{u}_i^n)'(x_1) \bigr)^2 \, dx =
        \mu_i(I,p_n) && \forall i \in \{ 0,\ldots,k \} \\
        \int_{\Omega_n} (\tilde{v}_i^n)'(x_1) \, \biggl( \parzder{\tilde{v}_k^n}{x_1} \biggr) \, dx
        & =
        \int_{\tilde{\Omega}_n} (\tilde{u}_1^n)'(x_1) (\tilde{u}_k^n)'(x_1) \, dx\\
        & +
        \tau \Biggl( \int_{\tilde{\Omega}_n} (\tilde{u}_i^n)'(x_1) (\tilde{u}_k^n)'(x_1) \bigl( x_2 - c(x_1) \bigr)^2 \, dx\Biggr) a_2(\Omega_n)^2 \\
        & -2 \tau \Biggl( \int_{\tilde{\Omega}_n} (\tilde{u}_i^n)'(x_1) \tilde{u}_k^n(x_1) \bigl( x_2 - c(x_1) \bigr) c'(x_1) \, dx \Biggr) a_2(\Omega_n)^2 \\
        & = \tau \Biggl( \int_{\tilde{\Omega}_n} (\tilde{u}_i^n)'(x_1) (\tilde{u}_k^n)'(x_1) \bigl( x_2 - c(x_1) \bigr)^2 \, dx\Biggr) a_2(\Omega_n)^2 \\
        & - 2 \tau \Biggl( \int_{\tilde{\Omega}_n} (\tilde{u}_i^n)'(x_1) \tilde{u}_k^n(x_1) \bigl( x_2 - c(x_1) \bigr) c'(x_1) \, dx \Biggr) a_2(\Omega_n)^2,
        & & \forall i \in \{ 0,\ldots, k-1 \} \\
        \int_{\tilde{\Omega}_n} (\tilde{v}_i^n)^2 \, dx
        & =
        \int_{\tilde{\Omega}_n} (\tilde{u}_i^n)^2 \, dx =
        1,
        & & \forall i \in \{ 0,\ldots, k \} \\
        \int_{\tilde{\Omega}_n} \tilde{v}_i ^n \tilde{v}_k^n \, dx
        & =
        \int_{\tilde{\Omega}_n} \tilde{u}_i^n(x_1) \tilde{u}_k^n(x_1) \, dx \\
        & +
        \tau \Biggl( \int_{\tilde{\Omega}_n} \tilde{u}_i^n(x_1)\tilde{u}_k^n(x_1) \bigl( x_2 - c(x_1) \bigr)^2 \, dx \Biggr) a_2(\Omega_n)^2 \\
        & = \tau \Biggl( \int_{\tilde{\Omega}_n} \tilde{u}_i^n(x_1) \tilde{u}_k^n(x_1) \bigl( x_2 - c(x_1) \bigr)^2 \, dx \Biggr) a_2(\Omega_n)^2,
        & & \forall i \in \{ 0,\ldots, k-1 \},
    \end{align*}
    we get 
    \begin{align*}
        2 \alpha_i^n 
        \bigl[ \mu^n - \mu_i(I,p_n) \bigr]
        & =
        2 \alpha_k^n \tau 
        \biggl[ \int_{\tilde{\Omega}_n} \bigl( \tilde{u}_i^n (x_1) \bigr)' \, \bigl( \tilde{u}_k^n (x_1) \bigr)' \bigl( x_2 - c(x_1) \bigr)^2 \, dx \\
        & -
       2 \int_{\tilde{\Omega}_n} \bigl( \tilde{u}_i^n (x_1) \bigr)' \bigl( \tilde{u}_k^n (x_1) \bigr) \bigl( x_2 - c(x_1) \bigr) c'(x_1) \, dx \\
        & -
        2
        \mu^n \int_{\tilde{\Omega}_n} \tilde{u}_i^n(x_1) \, u_k^n(x_1) \bigl( x_2 - c(x_1) \bigr)^2 \, dx \biggr] a_2(\Omega_n)^2.
    \end{align*}
    Let us denote with $A_n^i$ and $B_n^i$ the terms in square brakets at left   and right hand sides, respectively. By \eqref{eq:contradiction_with_subspace}, $\mu^n \geq \mu_{k}(I,p_n)$, so passing to the $\liminf$ and using \eqref{eq:mu_k-1_<_mu_k} we have
    \[
        \liminf_n A_n^i \geq \mu_k(I,p) - \mu_{k-1}(I,p)>0 \qquad \qquad \forall i \in \{ 0,\ldots, k-1 \}.
    \]
    The term at right hand-side has order $2$ in $a_2(\Omega_n)$, so also the term at right hand-side has the same order; hence, taking into account the fact that $\sum_{i=0}^k (\alpha_i^n)^2 = 1$, we obtain
    \[
        \alpha_i^n = O \bigl( a_2(\Omega_n)^2 \bigr) \qquad \forall i \in \{ 0,\ldots,k-1 \} \qquad \text{ and } \qquad \alpha_k^n = 1 - O \bigl( a_2(\Omega_n)^2\bigr).
    \]
    Let us now estimate the quotient in \eqref{eq:def_mu_max_con_riscalati}: we call $N$ and $D$ respectively the numerator and denominator related to the maximum.

\medskip
\noindent{\bf Numerator. }Recalling that
        \begin{align*}
            \int_{\Omega_n} \bigl((\tilde{u}_i^n)'\bigr)^2 \, dx & = \mu_i(I,p_n) \qquad  & \forall i \in \{ 0,\ldots, k \} \\
            \alpha_i^n & = O \bigl( a_2(\Omega_n)^2 \bigr) 
        \end{align*}
        and by the estimates used in \eqref{eq:optimality_condition_for_xi}, we have
        \begin{align*}
            N & =
            \sum_{i=0}^{k-1} (\alpha_i^n)^2 \mu_i(I,p_n) +
            (\alpha_k^n)^2 \Biggl \{ \int_{\tilde{\Omega}_n} \bigl( (\tilde{u}_k^n)'\bigr)^2 \, \Bigl[ 1+\tau \bigl( x_2 - c(x_1) \bigr)^2 a_2(\Omega_n)^2 \Bigr]^2 \, dx \\
            & - 
            4 \tau \Biggl( \int_{\tilde{\Omega}_n} (\tilde{u}_k^n)'(x_1) \tilde{u}_k^n(x_1) \bigl( x_2-c(x_1) \bigr) c'(x_1) \, dx \Biggr) a_2(\Omega_n)^2 \\
            & -
            4 \tau^2 \Biggl( \int_{\tilde{\Omega}_n} (\tilde{u}_k^n)'(x_1) \tilde{u}_k^n(x_1) \bigl( x_2-c(x_1) \bigr)^3 c'(x_1) \Biggr) a_2(\Omega_n)^4 \\
            & +
            4 \tau^2 \Biggl(  \int_{\tilde{\Omega}_n} \bigl( \tilde{u}_k^n (x_1) \bigr)^2 \bigl( x_2-c(x_1) \bigr)^2 c'(x_1)^2 \, dx \Biggr) a_2(\Omega_n)^4 \\
            & +
            4 \tau^2 \Biggl( \int_{\tilde{\Omega}_n} \bigl( \tilde{u}_k^n (x_1) \bigr)^2 \bigl( x_2-c(x_1) \bigr)^2 \, dx \Biggr) a_2(\Omega_n)^2 \Biggr \} \\
            & +
            2 \alpha_k^n \tau \sum_{i=0}^{k-1} \alpha_i^n \Biggl( \int_{\tilde{\Omega}_n} \bigl( \tilde{u}_i^n (x_1) \bigr)' \bigl( \tilde{u}_k^n(x_1) \bigr)'  \, \bigl( x_2 - c(x_1) \bigr)^2 \, dx \Bigg) a_2(\Omega_n)^2 \\
            & -
            4 \alpha_k^n \tau \sum_{i=0}^{k-1} \alpha_i^n \Biggl( \int_{\tilde{\Omega}_n} \bigl( \tilde{u}_i^n(x_1) \bigr)' \tilde{u}_k^n(x_1) \bigl( x_2-c(x_1) \bigr) c'(x_1) \, dx \Biggr) a_2(\Omega_n)^2 \\
            & =
            \sum_{i=0}^k (\alpha_i^n)^2 \mu_i(I,p_n) + 2 \tau \bigl( \alpha_k^n \bigr)^2 \Biggl( \int_{\tilde{\Omega}_n} \bigl( (\tilde{u}_k^n)'(x_1) \bigr)^2 \bigl( x_2 - c(x_1) \bigr)^2 \, dx \Biggr) a_2(\Omega_n)^2 \\
            & -
            4 \bigl(\alpha_k^n\bigr)^2 \tau \Biggl( \int_{\tilde{\Omega}_n} \bigl( \tilde{u}_k^n(x_1)\bigr)' \tilde{u}_k^n(x_1) \bigl( x_2 - c(x_1) \bigr) c^\prime(x_1) \, dx \Biggr) a_2(\Omega_n)^2 \\
            & +
            4 \tau^2 \bigl( \alpha_k^n \bigr)^2 \Biggl( \int_{\tilde{\Omega}_n} \bigl( \tilde{u}_k^n(x_1) \bigr)^2 \bigl( x_2 - c(x_1) \bigr)^2  \Biggr) a_2(\Omega_n)^2 + o \bigl( a_2(\Omega_n)^2 \bigr).
                  \end{align*}
        Recalling that
        \[
            \mu_i(I,p_n) \leq \mu_{k,d}^*,\qquad \sum_{i=0}^k (\alpha_i^n)^2 = 1, \qquad (\alpha_k^n)^2 = 1 - O \bigl( a_2(\Omega_n)^2 \bigr),
        \]
        we obtain
        \begin{align*}
            N
            & \leq
            \mu_{k,d}^* + \Biggl[ 2 \tau \int_{\tilde{\Omega}_n} \bigl( (\tilde{u}_k^n)' (x_1)\bigr)^2 \, \bigl( x_2 - c(x_1) \bigr)^2 \, dx - 4 \tau \int_{\tilde{\Omega}_n} \bigl( \tilde{u}_k^n (x_1) \bigr)'  \tilde{u}_k^n(x_1)  \bigl( x_2-c(x_1) \bigr) c'(x_1) \, dx \\
            & +
            4 \tau^2 \int_{\tilde{\Omega}_n} \bigl( \tilde{u}_k^n \bigr)^2 \, \bigl( x_2 - c(x_1) \bigr)^2 \, dx \Biggr] a_2(\Omega_n)^2 + o \bigl( a_2(\Omega_n)^2 \bigr).
        \end{align*}
    
\medskip
\noindent{\bf Denominator.} Recalling that $\int_{\Omega_n} (u_i^n)^2 \, dx = 1$ for each $i \in \{ 0,\ldots, k \}$, and arguing as for the numerator, we get
        \begin{align*}
            D
            & =
            \sum_{i=0}^{k-1}(\alpha_i^n)^2 + (\alpha_k^n)^2  \int_{\tilde{\Omega}_n} \bigl( \tilde{u}_k^n (x_1) \bigr)^2 \Bigl[ 1+\tau \bigl( x_2 - c(x_1) \bigr)^2 a_2(\Omega_n)^2 \Bigr]^2 \,dx  \\
            & +
            2 \sum_{i=0}^{k-1} \alpha_i^n \alpha_k^n \tau \int_{\tilde{\Omega}_n} \tilde{u}_i^n(x_1) \, u_k^n(x_1) \, \bigl( x_2-c(x_1) \bigr)^2 a_2(\Omega_n)^2 \, dx \\
            & =
            \sum_{i=0}^{k} (\alpha_i^n)^2 + 2 \tau (\alpha_k^n)^2 \Biggl( \int_{\tilde{\Omega}_n} \bigl( \tilde{u}_k^n (x_1) \bigr)^2 \bigl( x_2-c(x_1) \bigr)^2 \, dx \Biggr) a_2(\Omega_n)^2 + o \bigl( a_2(\Omega_n)^2 \bigr) \\
            & =
            1 + 2 \tau \Biggl( \int_{\tilde{\Omega}_n} \bigl( \tilde{u}_k^n (x_1) \bigr)^2 \bigl( x_2-c(x_1) \bigr)^2 \, dx \Biggr) a_2(\Omega_n)^2 + o \bigl( a_2(\Omega_n)^2 \bigr).
        \end{align*}

    Therefore we obtain
    \begin{equation}
        \label{eq:intermediate_to_cut_alpha_k^n}
        \begin{split}
            \mu_{k,d}^* & - \frac{1}{n} a_2(\Omega_n)^2 \\
            & \leq
            \frac{
                \displaystyle \mu_{k,d}^* + 2 \tau \Biggl( \int_{\tilde{\Omega}_n} \bigl( (\tilde{u}_k^n)' (x_1)\bigr)^2 \, \bigl( x_2 - c(x_1) \bigr)^2 \, dx\Biggr) a_2(\Omega_n)^2
            }{
                \displaystyle 1 + 2 \tau \Biggl( \int_{\tilde{\Omega}_n} \bigl( \tilde{u}_k^n (x_1) \bigr)^2 \bigl( x_2-c(x_1) \bigr)^2 \, dx \Biggr) a_2(\Omega_n)^2 + o \bigl( a_2(\Omega_n)^2 \bigr)
            } \\
            & -
                \frac{
                    \displaystyle 4 \tau \Biggl( \int_{\tilde{\Omega}_n} \bigl( \tilde{u}_k^n(x_1) \bigr)' \bigl( \tilde{u}_k^n(x_1) \bigr) \bigl( x_2-c(x_1) \bigr) c'(x_1) \, dx \Biggr) a_2(\Omega_n)^2
            }{
                \displaystyle 1 + 2 \tau \Biggl( \int_{\tilde{\Omega}_n} \bigl( \tilde{u}_k^n (x_1) \bigr)^2 \bigl( x_2-c(x_1) \bigr)^2 \, dx \Biggr) a_2(\Omega_n)^2 + o \bigl( a_2(\Omega_n)^2 \bigr)
            } \\
            & + \frac{
                \displaystyle 4 \tau^2 \Biggl( \int_{\tilde{\Omega}_n} \bigl( \tilde{u}_k^n (x_1) \bigr)^2 \, \bigl( x_2 - c(x_1) \bigr)^2 \, dx \Biggr) a_2(\Omega_n)^2 + o \bigl( a_2(\Omega_n)^2 \bigr)
            }{
                \displaystyle 1 + 2 \tau \Biggl( \int_{\tilde{\Omega}_n} \bigl( \tilde{u}_k^n (x_1) \bigr)^2 \bigl( x_2-c(x_1) \bigr)^2 \, dx \Biggr) a_2(\Omega_n)^2 + o \bigl( a_2(\Omega_n)^2 \bigr)
            }.
        \end{split}
    \end{equation}
    With the same computations as in the proof for $k=1$, we have
    \begin{align*}
        \biggl( \mu_{k,d}^* & - \frac{1}{n} a_2(\Omega_n)^2 \biggr) \Biggl( 1 + 2 \tau \Biggl[ \int_{\tilde{\Omega}_n} \bigl( \tilde{u}_k^n (x_1) \bigr)^2 \bigl( x_2-c(x_1) \bigr)^2 \, dx \Biggr] a_2(\Omega_n)^2 + o \bigl( a_2(\Omega_n)^2 \bigr) \Biggr) \\
        & \leq
        \mu_{k,d}^* + \Biggl[ 2 \tau\int_{\tilde{\Omega}_n} \bigl( (\tilde{u}_k^n)'(x_1)\bigr)^2 \, \bigl( x_2 - c(x_1) \bigr)^2 \, dx - 4 \tau \int_{\tilde{\Omega}_n} \bigl( \tilde{u}_k^n (x_1) \bigr)'  \tilde{u}_k^n (x_1) \, \bigl( x_2-c(x_1) \bigr) c'(x_1) \, dx \\
        & + 4 \tau^2 \int_{\tilde{\Omega}_n} \bigl( \tilde{u}_k^n (x_1) \bigr)^2 \, \bigl( x_2 - c(x_1) \bigr)^2 \, dx \Biggr] a_2(\Omega_n)^2 + o \bigl( a_2(\Omega_n)^2 \bigr) 
    \end{align*}
    that is
    \begin{equation}
        \label{eq:da:richiamare_giusta_k}
        \begin{split}
            -\frac{1}{n} a_2(\Omega_n)^2 + o\bigl( a_2(\Omega_n)^2 \bigr)
            & \leq
            2 \tau \Biggl[ \int_{\tilde{\Omega}_n} \Bigl( \bigl( (\tilde{u}_k^n)(x_1)' \bigr)^2 - \mu_{k,d}^* \bigl( \tilde{u}_k^n (x_1) \bigr)^2 \Bigr) \, \bigl( x_2-c(x_1) \bigr)^2 \, dx \\
            & -
            2 \int_{\tilde{\Omega}_n} \bigl( \tilde{u}_k^n (x_1) \bigr)' \tilde{u}_k^n(x_1) \, \bigl( x_2-c(x_1) \bigr) c'(x_1) \, dx \Biggr] a_2(\Omega_n)^2 \\
            & +
            4 \tau^2 \Biggl( \int_{\tilde{\Omega}_n} \bigl( \tilde{u}_k^n(x_1) \bigr)^2 \, \bigl( x_2-c(x_1) \bigr)^2 \, dx \Biggr) a_2(\Omega_n^2) \\
            & \eqqcolon 2 \gamma_n a_2(\Omega_n)^2 \tau + 4 \beta_n a_2(\Omega_n)^2 \tau^2,
        \end{split}
    \end{equation}
    where
    \begin{align*}
        \gamma_n & = \int_{\tilde{\Omega}_n} \Bigl( \bigl( (\tilde{u}_k^n)'(x_1) \bigr)^2 - \mu_{k,d}^* \bigl( \tilde{u}_k^n (x_1) \bigr)^2 \Bigr) \bigl( x_2-c(x_1) \bigr)^2 \, dx - 2\int_{\tilde{\Omega}_n} \bigl( \tilde{u}_k^n (x_1) \bigr)' \tilde{u}_k^n\bigl( x_2-c(x_1) \bigr) c'(x_1) \, dx, \\
        \beta_n & = \int_{\tilde{\Omega}_n} \bigl( \tilde{u}_k^n(x_1) \bigr)^2 \, \bigl( x_2-c(x_1) \bigr)^2 \, dx.
    \end{align*}
    As in the case $k=1$, we notice that the term at right hand side in \eqref{eq:da:richiamare_giusta_k} has order 2 in $a_2(\Omega_n)$ and it is a second order polynomial in $\tau$. Hence the contradiction is achieved choosing $\tau < 0$ sufficiently small provided that
    \[
        \lim_{n \to +\infty} \gamma_n \neq 0,
    \]
    that is
    \[
        \lim_{n \to +\infty}
            \int_{\tilde{\Omega}_n} \Bigl[ \bigl( \tilde{u}_k^n)'(x_1) \bigr)^2 - \mu_{k,d}^* \bigl( \tilde{u}_k^n (x_1) \bigr)^2\Bigr] \, \bigl( x_2-c(x_1) \bigr)^2 \, dx - 2 \int_{\tilde{\Omega}_n} \bigl( \tilde{u}_k^n (x_1) \bigr)' u_k^n\bigl( x_2-c(x_1) \bigr) c'(x_1) \, dx\neq 0.
    \]
    Up to a subsequence, $\tilde{u}_k^n$ converges to $\tilde{u}_k$, the $k$-th eigenfunction of the Sturm-Liouville problem related to $p_{\tilde{\Omega}}$, and it holds
    \begin{equation}
        \label{eq:limit_for_k_geq_2}
        \lim_{n \to + \infty} \gamma_n = \int_0^1 \Bigl[ \bigl( \tilde{u}_k'(x_1) \bigr)^2 - \mu_{k,d}^* \tilde{u}_k(x_1)^2  \Bigr] \biggl( \int_{S_{x_1} \cap \tilde{\Omega}} \bigl( x_2 - c(x_1) \bigr)^2 \, d\Ho^{d-1} \biggr) \, dx_1.
    \end{equation}
    
    Theorem \ref{theo:Henrot_Michetti} ensures that $p_{\tilde{\Omega}}^{\frac{1}{d-1}}$ is made by segments, so there exists a finite sequence 
    $\{z_i\}_{0\leq i\leq I}$, $I \in \N$ such that $p_{\tilde{\Omega}}^{\frac{1}{d-1}}$ is an affine function in $\bigl[ z_i,z_{i+1} \bigr]$ for $i \in \{ 0,\ldots,I -1 \}$, where $I=3$ if $d\not=3$ and $k \ge 2$, while if $d=3$ we only know
   $I \le k+1$. 
    So by \eqref{eq:momento_di_ordine_2}, for each $i \in \{ 0,\ldots,I \}$ there exists $K_i>0$ such that
  \begin{equation}\label{bgh1000}
        \int_{S_{x_1} \cap \tilde{\Omega}} \bigl( x_2 - c(x_1) \bigr)^2 \, d\Ho^{d-1} = K_i p(x_1)^{1+\frac{2}{d-1}} \qquad \forall x_1 \in \biggl[ z_i,z_{i+1} \biggr],
  \end{equation}
    and \eqref{eq:limit_for_k_geq_2} becomes
    \[
        \int_{0}^1 \Bigl[ ( \tilde{u_k}'(x_1) )^2 - \mu_{k,d}^* \tilde{u_k}(x_1)^2 \Bigr] \biggl( \int_{S_{x_1} \cap \tilde{\Omega}} \bigl( x_2-c(x_1) \bigr)^2 \, d\Ho^{d-1} \biggr) \, dx_1\hskip 2cm\]
        \[
        \hskip 2cm=
        \sum_{i=0}^{I-1}  K_i \int_{z_i}^{z_{i+1}} \Bigl[ \bigl( \tilde{u_k}'(x_1) \bigr)^2 - \mu_{k,d}^* \tilde{u_k}(x_1)^2 \Bigr] p(x_1)^{\frac{2}{d-1}}.
    \]
    
    Let us settle first the case $d\not=3$ since it is very similar to the case $k=1$ already treated. Then $I=3$, the profile of $p^\frac{1}{d-1}$ being the one of an isosceles trapezium. In particular, on the interval $[z_1,z_2]$ the function $p$ is constant and, thanks to the equation satisfied by $ \tilde u_k$ and of the fact that $ \tilde u_k (z_1)=  \tilde u_k (z_2) =0$ (see \cite[Lemma 3.11]{Henrot_Michetti}) we get 
    $$   \int_{z_1}^{z_{2}} \Bigl[ \bigl( \tilde{u_k}'(x_1) \bigr)^2 - \mu_{k,d}^* \tilde{u_k}(x_1)^2 \Bigr] p(x_1)^{\frac{2}{d-1}} =0
    $$

    With the same reasoning as in proof for $k=1$, using symmetry, we reduce the computation on the interval $[0, z_1]$ and conclude with a similar computation. 
    
    Assume now that $d=3$. We first note that all $K_i$ are equal in \eqref{bgh1000}, say $K\not=0$. Indeed, we have proved in \eqref{eq:momento_di_ordine_2},
    that $K$ seen as a function of $x_1$ is constant on each interval $[z_i,z_{i+1}]$. But since $K$ is a continuous function of $x_1$, it has to be constant on the whole interval $[0,1])$.
    Consequently,
     \[
        \int_{0}^1 \Bigl[ ( \tilde{u_k}'(x_1) )^2 - \mu_{k,d}^* \tilde{u_k}(x_1)^2 \Bigr] \biggl( \int_{S_{x_1} \cap \tilde{\Omega}} \bigl( x_2-c(x_1) \bigr)^2 \, d\Ho^{d-1} \biggr) \, dx_1\hskip 2cm\]
        \[
        \hskip 2cm=
         K \sum_{i=0}^{I-1}  \int_{z_i}^{z_{i+1}} \Bigl[ \bigl( \tilde{u_k}'(x_1) \bigr)^2 - \mu_{k,d}^* \tilde{u_k}(x_1)^2 \Bigr] p(x_1)^{\frac{2}{d-1}}= K   \int_{0}^1 \Bigl[ \bigl( \tilde{u_k}'(x_1) \bigr)^2 - \mu_{k,d}^* \tilde{u_k}(x_1)^2 \Bigr] p(x_1)^{\frac{2}{d-1}}.
    \]
Taking as tests $\tilde u_k p^\frac{1}{d-2} $ in the weak form of the equation, we get
  $$  
  \int_{0}^1 \Bigl[ \bigl( \tilde{u_k}'(x_1) \bigr)^2 - \mu_{k,d}^* \tilde{u_k}(x_1)^2 \Bigr] p(x_1)^{\frac{2}{d-1}}=     - \frac{2}{d-1} \int_0^{1} \tilde u_k'(x_1) \tilde u_k (x_1) p(x_1)^{\frac{2}{d-1}} \, p'(x_1) \, dx_1,
  $$
  $$=  -\frac{2}{d-1}\sum_{i=0}^{I-1} \int_{z_i}^{z_{i+1}} \tilde u_k'(x_1) \tilde u_k (x_1) p(x_1)^{\frac{2}{d-1}} \, p'(x_1) \, dx_1= \frac{1}{d-1}\sum_{i=0}^{I-1} \int_{z_i}^{z_{i+1}} [\tilde u_k(x_1)]^2 [p(x_1)^{\frac{2}{d-1}} \, p'(x_1)]',$$
  where in the last integration by parts we have used $\tilde u_k (z_i)=0$ for $i=1, \dots , I-1$ (see \cite[Lemma 3.11]{Henrot_Michetti}) and $p(z_0)=p(z_I)=0$. Now, on each interval $[z_i, z_{i+1}]$ the function $p^\frac{1}{d-1}$ is affine so, as for $k=1$, there exists a number $\tilde a_i$ such that
  $$ \forall x_1 \in (z_i, z_{i+1}), \;\;  p'(x_1) = \tilde{a_i} (d-1)p(x_1)^{1-\frac{1}{d-1}}. $$
  Consequently,
   \[ 
       K   \int_{0}^1 \Bigl[ \bigl( \tilde{u_k}'(x_1) \bigr)^2 - \mu_{k,d}^* \tilde{u_k}(x_1)^2 \Bigr] p(x_1)^{\frac{2}{d-1}}
        =
       K d \sum_{i=0}^{I-1}  \tilde{a}_i^2 \int_{z_i}^{z_{i+1}} \tilde{u_k}(x_1)^2 p(x_1) \, dx_1,
    \]
    that is strictly positive, which concludes the proof.
    
\end{proof}

\subsection{Proof of Theorem \ref{teo:optimality_exponent_2} }

In this Subsection we want to prove optimality of the exponent 2 in the planar case.  Let $T_{\alpha}$ be an isosceles triangle having aperture $\alpha \in (0,\pi)$ and equal sides of length $l$ such that the basis is placed along the $x_1$ axis (Figure \ref{fig_iscosceles_triangle}), i.e.
    \[
        T_{\alpha} = \biggl \{ (x_1,x_2) \in \R^2 \, : \, -l \sin \frac{\alpha}{2}  \leq x_1 \leq l \sin  \frac{\alpha}{2} , 0 \leq x_2 \leq 
        l \cos \frac{\alpha}{2} - \cot \frac{\alpha}{2}\, \lvert x_1 \rvert \biggr \}.
    \]
    If $\alpha \in \bigl( 0,\frac{\pi}{3} \bigr)$, we say $T_{\alpha}$ is subequilateral, otherwise superequilateral.

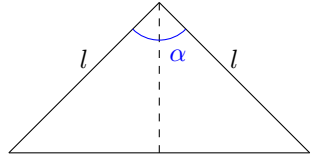
\begin{figure}[h!]
        \centering

        \begin{tikzpicture}
            \draw[black] (-2,0) -- (2,0);
            \draw[black] (2,0) -- (0,2) node[pos=0.5,above] {$l$};
            \draw[black] (0,2) -- (-2,0) node[pos=0.5,above] {$l$};

            \draw[black, dashed] (0,2) -- (0,0);

            \begin{scope}
                \clip (-2,0) -- (2,0) -- (0,2);

                \draw[blue] (0,2) circle(0.5);

                \node[blue] at (0.25,1.3) {$\alpha$};
            \end{scope}
            
        \end{tikzpicture}
        
        \caption{The triangle $T_{\alpha}$}
        \label{fig_iscosceles_triangle}
    \end{figure}

We recall the following property of $\mu_1(T_\alpha)$   proved in \cite[Proposition 6.42]{Laugesen_Siudeja}:
\begin{proposition}
    Let $T_{\alpha}$ be a superequilateral triangle, then it holds
    \begin{equation}
        \label{eq:Laugesen_Sliudeja}
        \sin^2 \biggl( \frac{\alpha}{2} \biggr) \leq \frac{\mu_1(T_{\alpha})D^2_{T_{\alpha}}}{4j_{0,1}^2} < 1.
    \end{equation}
\end{proposition}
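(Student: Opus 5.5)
The two inequalities in \eqref{eq:Laugesen_Sliudeja} have different origins. The right one is Kr\"oger's bound. The left one comes from a one-dimensional Dirichlet problem along rays, once we know the first eigenfunction is antisymmetric.

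\textbf{Upper bound.} For $d=2$ and $k=1$, Theorem \ref{theo:Henrot_Michetti} gives $\mu_{1,2}^*=(2j_{0,1})^2=4j_{0,1}^2$. The inequality \eqref{eq:Henrot_Michetti_e_Kroger_intro.bis} is strict for every bounded convex domain, hence $\mu_1(T_\alpha)D_{T_\alpha}^2<4j_{0,1}^2$. This holds for every $\alpha$.

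\textbf{Lower bound: reduction.} Let $\alpha\ge\pi/3$. The base is then the longest side, so $D_{T_\alpha}=2l\sin\frac{\alpha}{2}$. The left inequality is therefore equivalent to
\[
\mu_1(T_\alpha)\ \ge\ \frac{j_{0,1}^2}{l^2}.
\]
The plan has two steps:
\begin{enumerate}
\item Show that a first Neumann eigenfunction $u$ of $T_\alpha$ may be taken antisymmetric with respect to the altitude $x_1=0$.
\item Bound the mixed problem on the half triangle from below.
\end{enumerate}

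\textbf{Lower bound: the mixed problem.} Assume step 1 holds. Let $H=T_\alpha\cap\{x_1>0\}$, with vertices $B=(l\sin\frac{\alpha}{2},0)$, the foot $M=0$, and the apex $A$. Then $u$ vanishes on the altitude $MA$, and
\[
\mu_1(T_\alpha)=\frac{\int_H|\nabla u|^2}{\int_H u^2}.
\]
Use polar coordinates $(r,\theta)$ centred at the base vertex $B$. For $\theta$ between the directions of $BM$ and $BA$, the ray from $B$ stays inside $H$ and ends on the segment $MA$ at a distance $\rho(\theta)\le |BA|=l$. On each ray, $v(r)=u(r,\theta)$ vanishes at $r=\rho(\theta)$ and is unconstrained at $r=0$. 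The first eigenvalue of $-(rv')'=\lambda rv$ on $(0,\rho)$, with $v(\rho)=0$, is $j_{0,1}^2/\rho^2\ge j_{0,1}^2/l^2$. Hence
\[
\int_0^{\rho(\theta)}(\partial_r u)^2\,r\,dr\ \ge\ \frac{j_{0,1}^2}{l^2}\int_0^{\rho(\theta)}u^2\,r\,dr .
\]
Using $|\nabla u|^2\ge(\partial_r u)^2$ and integrating in $\theta$ gives $\mu_1(T_\alpha)\ge j_{0,1}^2/l^2$, which is the desired lower bound.

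\textbf{Main obstacle: antisymmetry (step 1).} The eigenvalue of $T_\alpha$ is the minimum of the lowest antisymmetric mode and the lowest nonconstant symmetric mode. We need the antisymmetric one to be no larger when $\alpha\ge\pi/3$. At $\alpha=\pi/3$ the equilateral triangle has a double first eigenvalue containing an antisymmetric eigenfunction. For $\alpha>\pi/3$ I would proceed in one of two ways:
\begin{itemize}
\item Cite the result of \cite{Laugesen_Siudeja} that the first eigenfunction of a superequilateral isosceles triangle is antisymmetric.
\item Prove it directly by comparing the two quotients. Test the antisymmetric quotient with suitable functions, and bound the symmetric mode from below by a Neumann problem on the half triangle with a mean-zero constraint, using a domain-monotonicity or transplantation argument as $\alpha$ grows.
\end{itemize}
I expect this symmetry statement to be the delicate point. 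The ray argument above is elementary once it is available.
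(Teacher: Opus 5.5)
Your proposal is correct, and it does more than the paper. The paper gives no proof of this proposition: it imports the whole statement from \cite[Proposition 6.42]{Laugesen_Siudeja}. You split it into two parts.
\begin{itemize}
\item The right inequality is Kr\"oger's strict bound from Theorem~\ref{theo:Henrot_Michetti}, with $\mu_{1,2}^*=4j_{0,1}^2$.
\item For the left inequality, $D_{T_\alpha}=2l\sin\frac{\alpha}{2}$ when $\alpha\ge\pi/3$, so it reduces to $\mu_1(T_\alpha)\ge j_{0,1}^2/l^2$. For antisymmetric modes your ray argument proves this: the rays from the base vertex end on the altitude at distance at most the hypotenuse $l$, $|\nabla u|^2\ge(\partial_r u)^2$, and the radial Dirichlet problem on a disk of radius $\rho$ has first eigenvalue $j_{0,1}^2/\rho^2$.
\end{itemize}
This route is self-contained, whereas the paper's citation is opaque. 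It also shows that the constant $4j_{0,1}^2\sin^2\frac{\alpha}{2}/D_{T_\alpha}^2$ is just a Bessel comparison with radius $l$.

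The step you flag as the main obstacle is unnecessary, so you do not need the Laugesen--Siudeja symmetry theorem. Your second bullet would be shaky as sketched anyway, because Neumann eigenvalues are not domain monotone. The argument goes as follows.
\begin{itemize}
\item The $\mu_1$-eigenspace of $T_\alpha$ is invariant under the reflection $x_1\mapsto -x_1$. Hence it contains an eigenfunction $u$ that is either odd or even.
\item If $u$ is odd, your ray argument applies directly.
\item If $u$ is even, then $u|_H$ is a nonconstant Neumann eigenfunction of the half triangle $H$ with zero mean. To see this, test the weak formulation on $T_\alpha$ with even extensions of functions in $H^1(H)$. Hence $\mu_1(T_\alpha)\ge\mu_1(H)$.
\item $H$ is a convex right triangle whose diameter is its hypotenuse $l$. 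The Payne--Weinberger inequality (also contained in \eqref{d30}) gives $\mu_1(H)\ge\pi^2/l^2>j_{0,1}^2/l^2$, since $\pi>j_{0,1}\approx 2.405$.
\end{itemize}
In both cases $\mu_1(T_\alpha)\ge j_{0,1}^2/l^2$. Together with Kr\"oger's bound, this is a complete proof using only results already invoked in the paper.

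As a by-product, $\mu_1(T_\alpha)\ge j_{0,1}^2/l^2$ holds for every aperture. The restriction $\alpha\ge\pi/3$ enters only through $D_{T_\alpha}=2l\sin\frac{\alpha}{2}$.
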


We are in position to prove optimality of exponent $2$ in the planar case.
\begin{proof}[Proof of Theorem \ref{teo:optimality_exponent_2}]      
    Let us consider a superequilateral triangle $T_{\alpha}$ having the segment $[-\frac 12,\frac 12] \times \{ 0 \}$ as basis. Its diameter equals to $1$ and its width $w_{T_\alpha}$ in the orthogonal direction of the diameter, equals the height. Note that for any planar convex set $\Omega$
    the width $w_\Omega$ in direction orthogonal to the diameter satisfies 
   \begin{equation}
        \label{eq:equivalence_width_e_a_2_Omega}
        a_2(\Omega) \leq w_{\Omega} \leq 2a_2(\Omega).
    \end{equation}

    Formula \eqref{eq:Laugesen_Sliudeja} gives
    \[
        \mu_1(T_{\alpha}) \geq 4j_{0,1}^2 \sin^2 \biggl( \frac{\alpha}{2} \biggr).
    \]
  
%
%
%
%
    Since $w_{T_\alpha}=\ell \cos \frac{\alpha}{2}$ we can write using \eqref{eq:Laugesen_Sliudeja}
    \[
        \mu_1(T_\alpha)
        \geq
        4j_{0,1}^2 \sin^2  \biggl( \frac{\alpha}{2} \biggr)
        =
        4j_{0,1}^2 \biggl(1-\frac{w_{T_\alpha}^2}{\ell^2} \biggr).
    \]
    When $w_{T_\alpha}\to 0$,  $\alpha\to \pi$ and $\ell \to \frac{1}{2}$, using the previous inequality combined with \eqref{eq_quantitative_mu_2} for $k=1$, we can write
    \begin{equation}
        \label{bound_by_Laugesen_Sliudeja}
        4j_{0,1}^2 - 16 j_{0,1}^2 w_{T_\alpha}^2 + o(w_{T_\alpha}^2) \leq \mu_1(T_\alpha) \leq 4j_{0,1}^2 - C(1,2) w_{T_\alpha}^2,
    \end{equation}
    therefore the exponent $2$ is optimal.
\end{proof}

\section{An explicit constant}
\label{section_4}
As explained in the Introduction, the proof of our quantitative inequality being done by contradiction does not provide any value
for the constant $C(k,d)$. This is the same situation than in most of the known quantitative inequalities.
Therefore, in this section, we want to show on an example, that we can find an explicit value for the constant $C(1,2)$.
For simplicity, we choose to work with a planar convex domain $\Omega$ that has two orthogonal axis of symmetry: the mediatrix of its diameter being one of them.
Without loss of generality, by scaling and rotating, we can assume that the diameter satisfies $D_\Omega=1$ and it is located on the interval $(0,1)\times \{0\}$.
Therefore, $\Omega$ will be represented as
$$\Omega:=\{(x,y)\in \mathbb{R}^2, 0<x<1, -h^+(x)< y < h^+(x) \}$$
where $h^+(x)$ is a concave function and $\tilde{h}(x)=2h^+(x)$ is the profile function of $\Omega$.
Here is our result:
\begin{theorem}\label{theoexplicit}
Let $\Omega$ be a planar convex domain, with two orthogonal axis of symmetry. the mediatrix of its diameter being one of them. Then it holds
\begin{equation}\label{mu1explicit}
\mu_1(\Omega) \leq \frac{4 j_{0,1}^2}{D_\Omega^2} - 0.108 \frac{w_\Omega^2}{D_\Omega^4}.
\end{equation}
In this inequality, $D_\Omega$ is the diameter of $\Omega$ while $w_\Omega$ is its width in the direction orthogonal to a diameter.
\end{theorem}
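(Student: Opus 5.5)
The plan is to make the test-function argument of the proof of Theorem \ref{main_theorem_k=2} quantitative, using the symmetries to reduce everything to one-dimensional computations. Normalize $D_\Omega=1$ with the diameter on $(0,1)\times\{0\}$. The symmetry about the mediatrix $x=\frac12$ makes $h^+$ symmetric about $\frac12$. The symmetry about the orthogonal line (the $x$-axis) gives $c(x)\equiv 0$, so the cross term in $c'$ disappears. The width is then $w_\Omega=2\max h^+=2h^+(\tfrac12)$.

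By the symmetry in $x$, $\mu_1(\Omega)$ is attained either by an eigenfunction odd with respect to $x=\frac12$, or by one odd in $y$. In the second case, comparison with Dirichlet-type bounds on the half-domain should give $\mu_1\ge c/w_\Omega^2$, which is far below $4j_{0,1}^2$ for thin domains. For fat domains a crude bound on $\mu_1$ (for instance Payne--Weinberger type, or direct comparison with the width) is used instead.

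The main estimate is an upper bound on $\mu_1$ from a test space, not a lower bound, so I would simply choose a test function. Take
\[
w(x,y)=u(x)\bigl(1+\tau y^2\bigr),
\]
where $u$ is the first eigenfunction of the Sturm--Liouville problem with weight $\tilde h=2h^+$. This $u$ is odd about $\frac12$, so $w$ automatically has zero mean and no constant $d_n$ is needed. The Rayleigh quotient is computed exactly with $y$-integrals of $1$, $y^2$, $y^4$ over $(-h^+,h^+)$:
\begin{multline*}
R(\tau)=\Bigl(\int_0^1 u'^2\bigl(2h+\tfrac{4\tau}{3}h^3+\tfrac{2\tau^2}{5}h^5\bigr)+4\tau^2\int_0^1 u^2\tfrac{2}{3}h^3\Bigr)\\
\Big/\int_0^1 u^2\bigl(2h+\tfrac{4\tau}{3}h^3+\tfrac{2\tau^2}{5}h^5\bigr).
\end{multline*}
Write $\mu_1(\tilde h)\le 4j_{0,1}^2$ (Theorem \ref{theo:Henrot_Michetti}, $d=2$) and $R(\tau)-\mu_1(\tilde h)=\frac{4\tau}{3}\,\Gamma+O(\tau^2)$, where, as in the $k=1$ proof,
\[
\Gamma=\int_0^1\bigl(u'^2-\mu_1(\tilde h)u^2\bigr)h^3\,dx .
\]
The quadratic in $\tau$ is then optimized explicitly, giving $\mu_1(\Omega)\le \mu_1(\tilde h)-\Gamma^2/(\text{const}\cdot\int u^2h^3)$ up to controlled higher-order terms. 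Note that this is only useful when $\Gamma$ is bounded below in terms of $h(\frac12)^2$. Alternatively, if $\mu_1(\tilde h)$ itself is already smaller than $4j_{0,1}^2-0.108\,w^2$, we are done directly.

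So the argument is a dichotomy with respect to a threshold $\delta$. If $4j_{0,1}^2-\mu_1(\tilde h)\ge 0.108\,w^2$, the proof is finished. Otherwise $\tilde h$ is quantitatively close to the triangle $p^*_{1,2}$ (after normalizing $\max h$). In that case I would show, by the integration by parts of the $k=1$ proof ($u$ solves $-(hu')'=\mu hu$ and $h$ is concave), that
\[
\Gamma=-2\int_0^{1/2}uu'h^2h'\,dx=\int_0^{1/2}u^2\,(h^2h')'\,dx ,
\]
and $(h^2h')'=2hh'^2+h^2h''$. The concave part $h^2h''\le0$ is the enemy. The task reduces to a calculus-of-variations problem: minimize $\Gamma/h(\frac12)^2$ over concave symmetric $h$ satisfying the near-optimality constraint, for which triangles give $\Gamma\approx 2\int_0^{1/2} u^2 h\,h'^2>0$.

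The main obstacle is this last step: obtaining an explicit lower bound for $\Gamma$ (or for the deficit $4j_{0,1}^2-\mu_1(\tilde h)$) uniformly over all symmetric concave profiles, with explicit stability constants for the one-dimensional Kröger problem. I would first try to bound the deficit directly by a Prüfer/Bessel comparison on $(0,\frac12)$, since with $h$ concave, $h(0)=0$ and $u(\frac12)=0$ one can compare with $J_0$. If that fails, I would fall back on a numerically verified minimization that produces the constant $0.108$. Finally, translate back using $w_\Omega=2h(\frac12)$ and scaling.
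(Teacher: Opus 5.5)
There is a genuine gap, and it is the step you yourself flag as ``the main obstacle''. Nothing in the proposal produces an explicit lower bound on $\Gamma$, or on the one-dimensional deficit $4j_{0,1}^2-\mu_1(\tilde h)$. So the constant $0.108$ is never derived, and falling back on a ``numerically verified minimization'' over all concave profiles is not a proof.

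The difficulty is structural. You take $u$ to be the eigenfunction of the actual profile $\tilde h$, so $\mu_1(\tilde h)$ and $\Gamma$ depend on $h$ nonlinearly through $u$. Your dichotomy then needs an explicit stability estimate for the one-dimensional Kr\"oger problem: a small deficit must force $h$ close to the triangle in a norm that controls $\int u^2h^2h''$. No such estimate is available, which is exactly why Theorem~\ref{main_theorem_k=2} is proved by contradiction. Two smaller points: by symmetry $\Gamma=-4\int_0^{1/2}uu'h^2h'$, not $-2\int_0^{1/2}uu'h^2h'$. And the discussion of which eigenfunction realizes $\mu_1$ is unnecessary, since only an upper bound is needed.

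The missing idea is to freeze the $x$-factor. Take $v(x,y)=f(x)(1+\tau y^2)$ with $f(x)=J_0(2j_{0,1}x)$ on $[0,\frac12]$, extended oddly about $x=\frac12$. This is the eigenfunction of the optimal triangular profile, not of $\tilde h$. Writing $\tilde h=wh$ with $h(\frac12)=1$, every term of the Rayleigh quotient becomes an explicit linear functional of $h$, $h^3$, $h^5$ against fixed Bessel weights. The steps are then:
\begin{enumerate}
\item The denominator is bounded above by its value at $h\equiv1$, because $z\mapsto z+\frac{\tau}{6}w^2z^3+\frac{\tau^2}{80}w^4z^5$ is increasing. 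The term $\frac{\tau^2}{12j_{0,1}^2}I_{3,0}(h)$ is bounded the same way.
\item What remains is one explicit calculus-of-variations problem, with no near-optimality constraint. Minimize $J(h)=\int_0^{1/2}\bigl(h+\frac{\tau}{6}w^2h^3+\frac{\tau^2}{80}w^4h^5\bigr)g\,dx$, where $g=J_0(2j_{0,1}x)^2-J_1(2j_{0,1}x)^2$, over concave, nondecreasing $h\ge0$ with $h(\frac12)=1$.
\item Since $g$ changes sign exactly once, the optimality conditions force the minimizer to be piecewise affine with at most one corner. Explicit perturbations then rule out the corner and $h^*(0)>0$, so $h^*(x)=2x$.
\item The identity $\int_0^{j_{0,1}}t\bigl(J_0^2-J_1^2\bigr)\,dt=0$ gives $\psi(1,2x)=0$, which kills the zeroth-order term.
\item The choice $\tau=j_{0,1}^2\psi(3,2x)/I_{0,0}(1)$, together with the monotonicity of the resulting quotient $Q(w)$ on $[0,1]$, gives $M\approx0.004669$, and $4j_{0,1}^2M\approx0.108$.
\end{enumerate}
This handles the one-dimensional deficit and the $w^2$ gain together, with the triangle as extremal, so no stability estimate is ever needed.
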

\begin{proof}
The profile function of the convex set is denoted by $\tilde{h}(x)$, it is concave, symmetric with respect to $x=1/2$ and increasing on $(0,1/2)$.
Moreover $\tilde{h}(1/2)=w_\Omega:=w$. We will normalize it by setting $h(x)=\tilde{h}(x)/w$ and $h$ takes values between $0$ and $1$.
Let us introduce the function
$$f(x)=\left\lbrace
\begin{array}{cc}
J_0(2j_{0,1} x) & \mbox{if } x\leq \frac{1}{2}\\ \vspace{2mm}
-J_0(2j_{0,1} (1-x)) & \mbox{if } x\geq \frac{1}{2}\\
\end{array}\right.$$
This function is odd with respect to $x=1/2$ and is actually the first (non-constant) eigenfunction for the Sturm-Liouville problem with the function $p(x)=x$ on $(0,\frac{1}{2})$ and $p(x)=1-x$ on $(\frac{1}{2},1)$.
Now we are considering as a test function for the Neumann problem
$$v(x,y)=f(x)(1+\tau y^2)$$
where $\tau$ is a (negative) constant that we define now.

We need to introduce several integrals involving Bessel functions, namely for any integer $p$ (we will only use $p=0,1,3,5$ in the sequel)
and any function $h(x)$:
$$I_{p,0}(h) = \int_0^{\frac{1}{2}} h(x)^p(J_0(2j_{0,1}x))^2 dx, \qquad I_{p,1}(h) = \int_0^{\frac{1}{2}} h(x)^p(J_1(2j_{0,1}x))^2 dx.$$
Let us also consider
$$\psi(p,h)=I_{p,0}(h) - I_{p,1}(h)=\int_0^{\frac{1}{2}} h(x)^p(J_0(2j_{0,1}x)^2 - J_1(2j_{0,1}x)^2) dx.$$
Now we define $\tau$ (for reasons that will become clear at the end) as
\begin{equation}\label{deftau}
\tau = j_{0,1}^2 \frac{\psi(3,2x)}{I_{0,0}(1)} \simeq -0.569.
\end{equation}
Let us come back to our test function $v$.
Since $f(x)$ is odd with respect to $1/2$ and $\Omega$ has the line $x=1/2$ as axis of symmetry, we immediately have
$\int_\Omega v(x,y) dxdy=0$ that makes $v$ admissible for the variational formulation.  Therefore
$$\mu_1(\Omega) \leq \dfrac{\int_\Omega |\nabla v|^2 dX}{\int_\Omega v^2 dX}=\dfrac{\int_{\Omega_\ell} |\nabla v|^2 dX}{\int_{\Omega_\ell} v^2 dX}$$
where $\Omega_\ell=\Omega \cap \{x_1\leq 1/2\}$ is the half left part of $\Omega$ and the equality above stands by symmetry.
In computing the above integrals, we will use
\begin{eqnarray*}
\int_{-h^+(x)}^{h^+(x)} y^2dy = \frac{2}{3} {h^+(x)}^3=\frac{1}{12} (\tilde{h}(x))^3 \\
\int_{-h^+(x)}^{h^+(x)} y^4dy =\frac{2}{5} {h^+(x)}^5=\frac{1}{80} (\tilde{h}(x))^5 \\
\end{eqnarray*}
Therefore, we get (we use here the fact that $J_0^\prime=-J_1$):
$$
\mu_1(\Omega) \leq \dfrac{4j_{0,1}^2 \int_0^{\frac{1}{2}} (J_1(2j_{0,1}x)^2\left\lbrack \tilde{h}(x)+\frac{2\tau}{12}\tilde{h}(x)^3+\frac{\tau^2}{80}\tilde{h}(x)^5\right\rbrack dx +\frac{4\tau^2}{12} \int_0^{\frac{1}{2}} (J_0(2j_{0,1}x)^2 \tilde{h}(x)^3 dx}{\int_0^{\frac{1}{2}} (J_0(2j_{0,1}x)^2\left\lbrack
\tilde{h}(x)+\frac{2\tau}{12}\tilde{h}(x)^3+\frac{\tau^2}{80}\tilde{h}(x)^5\right\rbrack}.
$$
Using the normalization with $h(x)$ and the notations for the different integrals, we can rewrite the previous inequality as
\begin{eqnarray*}
\mu_1(\Omega) \leq \dfrac{4j_{0,1}^2 \left\lbrack I_{1,1}(h)w + \frac{2\tau}{12} I_{3,1}(h) w^3+\frac{\tau^2}{80} I_{5,1}(h) w^5\right\rbrack +
\frac{4\tau^2}{12} I_{3,0}(h) w^3}{I_{1,0}(h) w + \frac{2\tau}{12} I_{3,0}(h) w^3+\frac{\tau^2}{80} I_{5,0}(h) w^5}\\
=4j_{0,1}^2 \dfrac{ I_{1,1}(h) + \frac{2\tau}{12} I_{3,1}(h) w^2+\frac{\tau^2}{80} I_{5,1}(h) w^4 + \frac{\tau^2}{12 j_{0,1}^2} I_{3,0}(h) w^2}
{I_{1,0}(h)  + \frac{2\tau}{12} I_{3,0}(h) w^2+\frac{\tau^2}{80} I_{5,0}(h) w^4}.
\end{eqnarray*}
We want to find an explicit constant $C$ such that $\mu_1(\Omega) \leq 4j_{0,1}^2(1-C w^2)$ that is equivalent to prove the following inequality
\begin{equation}\label{eqex1}
C\leq \frac{1}{w^2} \frac{\psi(1,h) +\frac{2\tau}{12} \psi(3,h)w^2 +\frac{\tau^2}{80} \psi(5,h) w^4-\frac{\tau^2}{12 j_{0,1}^2} I_{3,0}(h) 
w^2}{I_{1,0}(h)  + \frac{2\tau}{12} I_{3,0}(h) w^2+\frac{\tau^2}{80} I_{5,0}(h) w^4}
\end{equation}
for all $w\in (0,1]$ and all $h$ concave (taking values between $0$ and $1$).
Let us first look at the denominator $D$ that can be written
$$D(h)=\int_0^{\frac{1}{2}}  [h(x)+\frac{\tau}{6} w^2 h^3(x) +\frac{\tau^2}{80} w^4 h^5(x)]J_0(2j_{0,1}x)^2 dx$$
and since $z\mapsto z+\frac{\tau}{6} w^2 z^3 +\frac{\tau^2}{80} w^4 z^5$ is increasing (its derivative is $(1+\frac{\tau}{4} z^2w^2)^2$) and $h(x)\leq 1$
 we infer that $D(h)\leq D(1)$ with
 \begin{equation}\label{denoexpli}
 D(1)= \left(1+\frac{\tau}{6} w^2 +\frac{\tau^2}{80} w^4\right) I_{0,0}.
 \end{equation}
In the same way the last term in the numerator can be estimated by
\begin{equation}\label{lastermexpli}
\frac{\tau^2}{12 j_{0,1}^2} I_{3,0}(h) \leq \frac{\tau^2}{12 j_{0,1}^2} I_{3,0}(1)=\frac{\tau^2}{12 j_{0,1}^2} I_{0,0}.
\end{equation}
Now we concentrate on the three first terms in the numerator. We want to minimize their sum, with respect to the function $h$ that leads
us to solve the following problem in the calculus of variations. We denote by $J$ the sum:
$$J(h)=\psi(1,h) +\frac{\tau}{6} \psi(3,h)w^2 +\frac{\tau^2}{80} \psi(5,h) w^4$$
here,  $w$ is a fixed number between $0$ and $1$ and $\tau$ has been defined in \eqref{deftau}.
\begin{proposition}
Let us consider the following minimization problem
\begin{equation}\label{calvaexpli}
\min \{J(h), h:[0,\frac{1}{2}] \to [0,1],  \mbox{ $h$ concave }, h(0)\geq 0, h(1/2)=1, h^\prime(1/2)\geq 0\}.
\end{equation}
Then the solution of \eqref{calvaexpli} is the function $h(x)=2x$.
\end{proposition}
\begin{proof}
We can rewrite $J(h)$ as
$$J(h)=\int_0^{\frac{1}{2}}  [h(x)+\frac{\tau}{6} w^2 h^3(x) +\frac{\tau^2}{80} w^4 h^5(x)](J_0(2j_{0,1}x)^2 - J_1(2j_{0,1}x)^2)dx .$$
Let us introduce the function $g(x)=J_0(2j_{0,1}x)^2-J_1(2j_{0,1}x)^2$, see its graph on Figure \ref{graphg}.
\begin{figure}
\begin{center}
\includegraphics[scale=0.25]{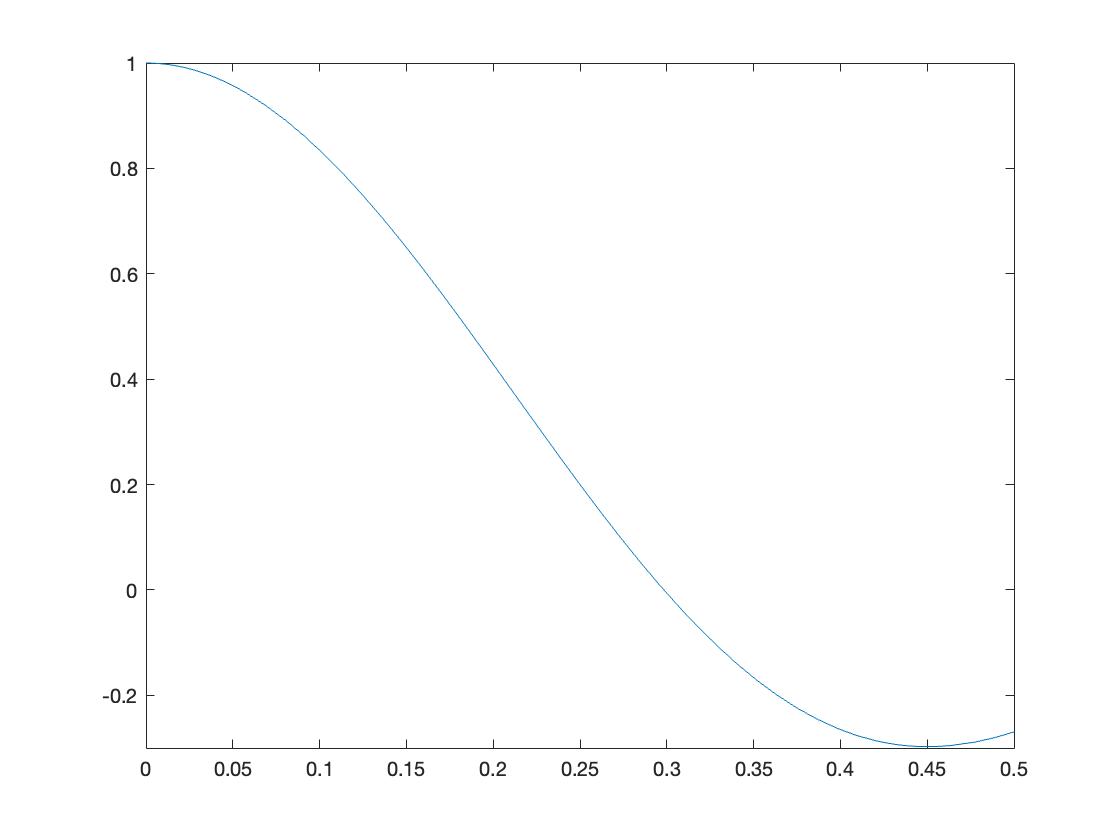} \caption{The graph of $x\mapsto J_0(2j_{0,1}x)^2-J_1(2j_{0,1}x)^2$}\label{graphg}
\end{center}
\end{figure}
The proof we give below works for any $\tau \in (-1,1)$.  What makes the result non trivial is the fact that the function $g$ changes its sign
at one point $x_0$.

The existence of a minimizer is standard, a minimizing sequence of concave function converges uniformly on every compact subset of 
the interval $I=(0,1/2]$
(because its derivative will be uniformly bounded on such a compact) and the limit $h^*$ is concave and satisfies also $h^*(0)\geq 0, h^*(\frac{1}{2})=1$.
Note that $h^\prime(1/2)\geq 0$ is equivalent (by concavity of $h$) to $h$ increasing, and this also pass to the limit;
Therefore $h^*$ is a minimizer.

 Let us write the optimality condition, using the classical framework explained for example in 
\cite{Lam-Nov2010} or \cite{Henrot_Michetti}. First, the derivative of $J$ with respect to $h$ is given by
$$<dJ(h^*),v>= \int_0^{\frac{1}{2}}  (1+\frac{\tau}{4} {h^*(x)}^2 w^2)^2 v(x) g(x) dx.$$
Therefore, there exists a Lagrange multiplier $\xi \in H^1(I)$ (corresponding to the concavity constraint) with $\xi \geq 0$
and $\xi=0$ on the support $S$ of $h^{\prime\prime}$,  and there exists three numbers $\alpha_0$, $\alpha_1$
and $\alpha_2$ (corresponding to the constraints $h(0)\geq 0, h(1)=1$) and $h^\prime(1/2)\geq 0$ such that
\begin{equation}\label{optiexpli}
\xi^{\prime\prime}= (1+\frac{\tau}{4} {h^*(x)}^2 w^2)^2  g(x) + \alpha_0\delta_0 + \alpha_1 \delta_{1/2} - \alpha_2 \delta^\prime(1/2).
\end{equation}
We denote by $f(x)=(1+\frac{\tau}{4} {h^*(x)}^2 w^2)^2  g(x)$ and therefore, we have $\xi^{\prime\prime}=f(x)$ on $I$.
The previous relation is valid on the whole interval $I$ and shows, in particular, that $\xi$ is a $C^2$ function in the open interval.
The support $S$ being closed, its complement $S^c$ is a union of open intervals. Let us denote by $(a,b)$ with $0<a<b<1/2$ a possible
such interval in $S^c$ with the assumption that $a$ and $b$ belongs to $S$. In particular $\xi(a)=\xi(b)=0$.
Moreover, since $\xi$ is $C^2$ and must remain non-negative outside $[a,b]$, this implies that $\xi^\prime(a)=\xi^\prime(b)=0$.
But it is easy to check that the four relations $\xi(a)=\xi(b)=\xi^\prime(a)=\xi^\prime(b)=0$ would imply that the function $f$
vanishes (at least) twice on the interval $(a,b)$. Now since $|\tau|<1, 0\leq h^*\leq 1$ and $ 0\leq w\leq 1$, we see that $(1+\tau {h^*(x)}^2 w^2)$
does not vanish and since $g$ vanishes only once, this situation is not possible: 
$S^c$ does not contain any interior interval. On the other hand $S$ does not contain neither any interval (where $\xi$ would be identically 0).
Therefore, the only possibilities are:
\begin{description}
\item[(a)] $S=\{0,a,1/2\}$ for some point $a$ in the interior of the interval, and $h^*$ is affine on each interval $(0,a)$ and $(a,1/2)$.
\item[(b)] $S=\{0,1/2\}$ and $h^*$ is affine on $I$.
\end{description}
We claim that we can rule out the situation where $h^*(0)=h_0>0$ in both cases. Indeed, in that case, considering a deformation $v(x)$
defined by (we recall that $x_0$ is the zero of the function $g$)
$$v(x)=\left\lbrace
\begin{array}{cc}
x-x_0 & \mbox{ if } x\leq x_0\\
0 & \mbox{ if } x\geq x_0\\
\end{array}\right.$$
we see that $h+tv$ is still concave for $t$ small and the derivative $<dJ(h^*),v>$ is clearly negative, yielding a non optimal $h^*$.

It remains to rule out the case of a function $h^*$ piecewise affine with a corner at some point $a\in (0,1/2)$.
First of all, since $\xi$ must remain non-negative and $\xi(a)=\xi^\prime(a)=0$ for the same reasons than before, we must have
$\xi^{\prime\prime}(a)\geq 0$. In view of the expression given in \eqref{optiexpli}, this implies that $a\leq x_0$. The case $a=x_0$
is not possible either since then we would have $\xi^{\prime\prime}(a)=0$ and a negative third derivative at $x_0$ contradicting
the sign condition for $\xi$. Therefore $a<x_0$.
Now, let us consider the following perturbation
$$v(x)=\left\lbrace
\begin{array}{cc}
-x(x_0 -a) & \mbox{ if } x\leq a\\
a(x-x_0) & \mbox{ if } a\leq x\leq x_0\\
0 & \mbox{ if } x\geq x_0\\
\end{array}\right.$$
this function $v$ is continuous and always negative, the perturbation $h^*+tv$ remains concave for $t>0$ small (because the negative Dirac mass
of $h^*$ at point $a$ absorbs the positive Dirac mass of $v$), and $h^*+tv\leq h^*\leq 1$ therefore one more time, we have 
$<dJ(h^*),v>$  negative, yielding a non optimal $h^*$. As a conclusion, the only remaining possibility is $h^*(x)=2x$.
\end{proof}

As a consequence of the previous proposition, we have the following inequality
\begin{equation}\label{numexpli2}
\psi(1,h) +\frac{\tau}{6} \psi(3,h)w^2 +\frac{\tau^2}{80} \psi(5,h) w^4 \geq 
\psi(1,2x) +\frac{\tau}{6} \psi(3,2x)w^2 +\frac{\tau^2}{80} \psi(5,2x) w^4 .
\end{equation}
Now, it turns out that a classical relation of Bessel function is
$$\int_0^{j_{0,1}}  t (J_0(t)^2 - J_1(t)^2)dt =0 $$
that implies that $\psi(1,2x)=0$. Therefore, taking into account \eqref{numexpli2},  \eqref{lastermexpli}, \eqref{denoexpli}, the quotient 
in \eqref{eqex1} can be estimated from below by:
\begin{equation}\label{eqex2}
Q(w):=\frac{\frac{\tau}{6} \psi(3,2x) +\frac{\tau^2}{80} \psi(5,2x) w^2 - \frac{\tau^2}{12 j_{0,1}^2} I_{0,0} }{ \left(1+\frac{\tau}{6} w^2 +\frac{\tau^2}{80} w^4\right) I_{0,0}}
\end{equation}
Now, an elementary study of the (positive) function $w\mapsto Q(w)$ shows that it is increasing on the interval $[0, 1]$, thus its minimal value is
obtained for $w=0$ and 
$$Q(0)= \left(\frac{\tau}{6} \psi(3,2x) - \frac{\tau^2}{12 j_{0,1}^2} I_{0,0}\right)/I_{0,0}.$$
The choice of $\tau$ that we did at the beginning yields the better (maximal) value for $Q(0)$
that is 
$$M:=\frac{1}{12} j_{0,1}^2 \frac{\psi(3,2x)^2}{I_{0,0}^2} \simeq 0.004669.$$
Multiplying this value by $4j_{0,1}^2$ gives the value of the constant $0.108$ stated in the theorem.
\end{proof}

 \section*{Acknowledgments} D.B. and A.H. would like to thank the Isaac Newton Institute for Mathematical Sciences, Cambridge, for support and hospitality during the programme {\it Geometric Spectral Theory} where work on this paper was undertaken.   This work was supported by EPSRC grant no EP/Z000580/1. They were 
 also partially supported by the ANR project STOIQUES financed by the French Agence Nationale de la Recherche (ANR-24-CE40-2216).  D.B. was partially supported by a grant from the Simons Foundation. 
  
\medskip
\noindent Data Availability Data sharing is not applicable to this article as no datasets were
generated or analysed.\\
The authors declare that there are no conflict of interest with this work.

\bibliographystyle{abbrv}
\bibliography{biblio}

\end{document}